\begin{document}

\newtheorem{tm}{Theorem}[section]
\newtheorem{prop}{Proposition}[section]
\newtheorem{defin}{Definition}[section]
\newtheorem{coro}{Corollary}[section]
\newtheorem{lem}{Lemma}[section]
\newtheorem{assumption}{Assumption}[section]
\newtheorem{rk}{Remark}[section]
\newtheorem{nota}{Notation}[section]
\numberwithin{equation}{section}

    \newcommand{\lb}{\label}
  \newcommand{\beq}{\begin{equation}}
    \newcommand{\eeq}{\end{equation}}

\newcommand{\stk}[2]{\stackrel{#1}{#2}}
\newcommand{\dwn}[1]{{\scriptstyle #1}\downarrow}
\newcommand{\upa}[1]{{\scriptstyle #1}\uparrow}
\newcommand{\nea}[1]{{\scriptstyle #1}\nearrow}
\newcommand{\sea}[1]{\searrow {\scriptstyle #1}}
\newcommand{\csti}[3]{(#1+1) (#2)^{1/ (#1+1)} (#1)^{- #1
 / (#1+1)} (#3)^{ #1 / (#1 +1)}}
\newcommand{\RR}[1]{\mathbb{#1}}

\newcommand{\rd}{{\mathbb R^d}}
\newcommand{\ep}{\varepsilon}
\newcommand{\rr}{{\mathbb R}}
\newcommand{\alert}[1]{\fbox{#1}}
\newcommand{\eqd}{\sim}
\def\p{\partial}
\def\R{{\mathbb R}}
\def\N{{\mathbb N}}
\def\Q{{\mathbb Q}}
\def\C{{\mathbb C}}
\def\l{{\langle}}
\def\r{\rangle}
\def\t{\tau}
\def\k{\kappa}
\def\a{\alpha}
\def\la{\lambda}
\def\De{\Delta}
\def\de{\delta}
\def\ga{\gamma}
\def\Ga{\Gamma}
\def\ep{\varepsilon}
\def\eps{\varepsilon}
\def\si{\sigma}
\def\Re {{\rm Re}\,}
\def\Im {{\rm Im}\,}
\def\E{{\mathbb E}}
\def\P{{\mathbb P}}
\def\Z{{\mathbb Z}}
\def\D{{\mathbb D}}
\def\p{\partial}
\newcommand{\ceil}[1]{\lceil{#1}\rceil}

\def\calK{{\mathcal K}}
\def\calR{{\mathcal R}}
\def\calM{{\mathcal M}}
\def\bB{{\mathbb B}}
\def\calT{{\mathcal T}}

\newcommand{\qq}{\hfill \ensuremath{\Box}}

\title{Global existence of classical solutions of chemotaxis systems with logistic source and consumption or linear signal production on $\mathbb{R}^{n}$}

\author{
Zulaihat Hassan,  Wenxian Shen, and Yuming Paul Zhang  \\
Department of Mathematics and Statistics\\
Auburn University, AL 36849\\
U.S.A. }

\date{}
\maketitle

\begin{abstract}
While much literature on chemotaxis systems focuses on bounded domains, this paper emphasizes the global existence of classical solutions for  the following three primary chemotaxis systems with a logistic source on $\mathbb{R}^n$,
\begin{equation}
\label{abstract-eq1}
\begin{cases}
u_t= \Delta u-\chi\nabla \cdot(u\nabla v)+u(a-bu),\quad x\in \mathbb{R}^n\cr
\tau v_t= \Delta v-uv,\quad x\in\mathbb{R}^n,
\end{cases}
\end{equation}
\begin{equation}
\label{abstract-eq2}
\begin{cases}
u_t= \Delta u-\chi\nabla \cdot(u\nabla v)+u(a-bu),\quad x\in \mathbb{R}^n\cr
\tau v_t= \Delta v-\lambda v + \mu u,\quad x\in\mathbb{R}^n,
\end{cases}
\end{equation}
and
\begin{equation}
\label{abstract-eq3}
\begin{cases}
u_t= \Delta u-\chi\nabla \cdot(u\nabla v)+u(a-bu),\quad x\in \mathbb{R}^n\cr
0= \Delta v-\lambda v + \mu u,\quad x\in\mathbb{R}^n,
\end{cases}
\end{equation}
where $\chi$ is a nonzero number and $a,b,\lambda,\mu, \tau$ are positive constants.
Suppose that $(u,v)$ is a nonnegative classical solution of \eqref{abstract-eq1} (resp. \eqref{abstract-eq2}, \eqref{abstract-eq3}) on the maximal interval $(0,T_{\max})$. We first prove in a unified way  that if {there is $p>\max\{1,\frac{n}{2}\}$ such that}
$$
\limsup_{t\to T_{\max}}\sup_{x_0\in\mathbb{R}^n}\int_{B(x_0,1)}u^p(t,x+x_0)dx<\infty,
$$
where {$B(x_0,1)$} is a ball centered at $x_0$ with radius $1$, then
 $$T_{\max}=\infty\quad {\rm and}\quad
\limsup_{t\to\infty}\|u(t,\cdot)\|_{L^\infty(\mathbb{R}^n)}<\infty.
$$
We then provide sufficient conditions for the global existence and boundedness of
classical solutions  of \eqref{abstract-eq1},  \eqref{abstract-eq2}, and \eqref{abstract-eq3} with nonnegative initial functions. It follows that nonnegative classical solutions of \eqref{abstract-eq1},  \eqref{abstract-eq2}, and \eqref{abstract-eq3}  exist globally and stays bounded in one- and two-dimensional settings for any chemotaxis sensitivity $\chi$.
Notably, the methods developed in this paper can be adapted to the bounded domain case without much effort.  Several existing results for \eqref{abstract-eq1},  \eqref{abstract-eq2}, and \eqref{abstract-eq3}  on bounded domains are then improved.
\end{abstract}

\medskip

\noindent {\bf Keywords:} Global existence, Boundedness, Chemotaxis systems, Logistic source, Consumption, Linear signal production.

\medskip

\noindent{\bf AMS Subject Classification (2020):}  35K45, 35M31,  35Q92, 92C17, 92D25

\tableofcontents

\section{Introduction}

\subsection{Overview}

Movement of living organisms or biological  species are partially oriented in response to certain
chemicals  in many
 biological phenomena such as bacteria aggregation, immune system response or angiogenesis
in the embryo formation and in tumor development.
 Theoretical and mathematical modeling
of chemotaxis phenomenon dates back to the pioneering works of Patlak in the 1950s (\cite{Pat})  and Keller-Segel in 1970s (\cite{KeSe0, KeSe1, KeSe2}).
 Since these  pioneering works, a rich variety of mathematical models for studying chemotaxis have been introduced, and  the area  remains as one of the most active
ones in nonlinear PDEs. The reader is referred to \cite{ArTy, BeBeTaWi, HiPa, Hor, Pai}  for some detailed introduction into the mathematics and applications
of chemotaxis models and  some survey on the recent developments.

The following  is a general chemotaxis system to describe the movement of cells under the influence of some chemical signal,
\begin{equation}
\label{general-eq}
\begin{cases}
\p_t u=\nabla (D(u,v)\nabla u-\chi(u,v)\nabla v)+f(u,v),\quad x\in\Omega\cr
\tau \p_t v=d \Delta v+g(u,v)-h(u,v)v,\quad x\in\Omega,
\end{cases}
\end{equation}
where $u$ represents the cell  density on a given domain
 $\Omega\subset\mathbb{R}^n$ and $v$ denotes
the concentration of the chemical signal. The  function $D(u,v)$ describes the diffusivity
of the cells and  $\chi(u,v)$ represents the chemotaxis  sensitivity.   The function $f(u,v)$ models  cell growth and death, and
functions $g(u, v)$ and $h(u, v)$  describe production and degradation
of the chemical signal, respectively.
The parameters $\tau$ and $d$ are   linked to the speed of diffusion of the chemical substance.  Biologically, $\tau=0$ indicates that the chemical substance  diffuses much faster than the cells, while $d=0$ indicates that the chemical substance diffuses much slower than the cells. One of the central problems for chemotaxis models is whether finite-time blow-up occurs and  whether any nonnegative solution exists globally and stays bounded.

There are many studies on the finite-time blow-up and  global existence of  nonnegative solutions of various special cases of \eqref{general-eq}  on bounded domain $\Omega$ with Neumann boundary conditions (see (1.3)-(1.81) and Tables 1 and 2 in \cite{ArTy} for  various special cases of \eqref{general-eq}).  For example,  consider  the following three special cases of \eqref{general-eq},
\begin{equation}
\label{special-eq1}
\begin{cases}
u_t=\Delta u-\chi\nabla\cdot(u\nabla v)+u(a-bu),\quad x\in\Omega\cr
\tau v_t=\Delta v-uv,\quad x\in\Omega\cr
u(0,x)=u_0(x),\,\, v(0,x)=v_0(x),\quad x\in\Omega,
\end{cases}
\end{equation}
\begin{equation}
\label{special-eq2}
\begin{cases}
u_t=\Delta u-\chi\nabla \cdot(u\nabla v)+u(a-bu),\quad x\in\Omega\cr
\tau v_t=\Delta v+\mu u-\lambda v,\quad x\in\Omega\cr
u(0,x)=u_0(x),\,\, v(0,x)=v_0(x),\quad x\in\Omega,
\end{cases}
\end{equation}
and
\begin{equation}
\label{special-eq3}
\begin{cases}
u_t=\Delta u-\chi\nabla \cdot(u\nabla v)+u(a-bu),\quad x\in\Omega\cr
0= v+\mu u-\lambda v,\quad x\in\Omega\cr
u(0,x)={u_0(x)},\quad x\in\Omega
\end{cases}
\end{equation}
complemented with Neumann boundary conditions when $\Omega$ is a  bounded domain in $\R^n$, where $\tau>0$, $\chi\in\R$, and $a,b\ge 0$ are constants.
Note that
 $\chi>0$ corresponds to the positive taxis
and $\chi<0$ corresponds to the negative taxis.  The reaction function $u(a-bu)$ is referred to as {\it logistic source}. When $a=b=0$, \eqref{special-eq1}-\eqref{special-eq3} are referred to as {\it minimal chemotaxis systems}.
  Biologically, \eqref{special-eq1} describes such cases  that  the chemical substance is consumed by the cells, and is referred to as a {\it chemotaxis system with  consumption}.  \eqref{special-eq2} and \eqref{special-eq3}  describe such cases that the chemical substance is produced
by the cells at the rate $\mu$ and degrades at the rate $\lambda$, and are referred to as {\it chemotaxis systems with linear signal production}.  There are many studies of finite-time blow-up and global existence of classical solutions of \eqref{special-eq1}-\eqref{special-eq3}
on bounded domain $\Omega$ complemented with Neumann boundary conditions.
Here are some existing results.

\begin{itemize}

\item Consider \eqref{special-eq1} with $\Omega\subset\R^n$ being a bounded smooth domain and complemented with the Neumann boundary condition,
\begin{equation}
\label{special-boundary-cond}
\frac{\partial u}{\partial n}=\frac{\partial v}{\partial n}=0,\quad x\in\partial \Omega.
\end{equation}

\item[{\bf --}] Assume that   $a=b=0$ and $\tau=1$. It is proved in \cite[Theorem 1.1]{Tao} that
 \eqref{special-eq1}+\eqref{special-boundary-cond} has a unique globally defined bounded classical solution with initial functions
$(u_0,v_0)\in \big(W^{1,p}(\Omega)\big)^2$ for some $p>n$  ($u_0,v_0\ge 0$)  provided
that
\begin{equation}
\label{bounded-domain-cond1}
0<\|v_0\|_{L^\infty(\Omega)}\cdot  \chi<\frac{1}{6(n+1)}.
\end{equation}
It is proved in \cite[Theorem 4.4]{ZhLi}  that   any globally defined bounded positive classical solutions of  \eqref{special-eq1}+\eqref{special-boundary-cond} converges to $\big(\frac{1}{|\Omega|}\int_\Omega u_0,0\big)$ as $t\to\infty$  (see also \cite{TaWi, Win1}
for the global existence and convergence of weak solutions of \eqref{special-eq1}+\eqref{special-boundary-cond} in the case that $n\ge 3$).

\item[{\bf --}]  Assume that $a,b>0$ and $\tau=1$.  It is proved in \cite[Theorem 3.3]{WaKhKh} that
 \eqref{special-eq1}+\eqref{special-boundary-cond} has a unique globally defined bounded classical solution with initial functions
$(u_0,v_0)\in \big(W^{1,p}(\Omega)\big)^2$ for some $p>n$  ($u_0,v_0\ge 0$)  provided
that
 \eqref{bounded-domain-cond1}  holds.  It is proved in \cite{LaWa}   that
 \eqref{special-eq1}+\eqref{special-boundary-cond} has a unique globally defined bounded classical solution with initial functions $(u_0,v_0)\in (C(\bar \Omega)\times C^1(\bar\Omega)$ ($u_0,v_0\ge 0$)  provided that $\chi\|v_0\|_{L^\infty(\Omega)}$ is small relative to $b$ (see \cite[Theorem 1.1]{LaWa}), and that any positive bounded globally defined classical solution of \eqref{special-eq1}+\eqref{special-boundary-cond} converges to $\big(\frac{a}{b},0\big)$ as $t\to\infty$ (see \cite[Theorem 1.2]{LaWa}). The reader is also referred to \cite{XLi} for the global existence of classical solutions of \eqref{special-eq1}+\eqref{special-boundary-cond} when $n=2$.

\item[{\bf --}] It remains open whether positive classical solutions of
 \eqref{special-eq1}+\eqref{special-boundary-cond} exist globally without any conditions on $\chi$ and $\|v_0\|_\infty$.

\item Consider \eqref{special-eq2}+\eqref{special-boundary-cond}  with $\Omega\subset\R^n$ being a bounded smooth domain.

\item[{\bf --}] If  $a=b=0$, it  is known that finite-time blow-up does not occur when $n=1$, and may occur when $n\ge 2$ (see \cite{HeVe2}, \cite{Nag}, etc.).

\item[{\bf --}]  If  $a, b>0$, it is known that finite-time blow-up does not occur when $n=1,2$ (see \cite{OsTsYa, OsYa1}). Hence the logistic source prevents finite-time blow-up to occur in certain sense.

\item[{\bf --}] Assume that $a,b>0$ and $\tau=1$. It is known that finite-time blow-up does not occur provided
\begin{equation}
\label{bounded-domain-cond2-1}
b>\frac{n|\chi|\mu}{4}
\end{equation}
(see \cite{IsSh, Win}).

\item[{\bf --}]  Assume that $a,b>0$ and $\tau$ is any positive number. It is proved in \cite[Theorem 1.1]{IsSh} (see also \cite[Theorem 2.2]{ZhLiBaZo} for the case $\tau=1$)
 that finite-time blow-up does not occur provided
\begin{equation}
\label{bounded-domain-cond2-2}
b>\inf_{\gamma>\max\{1,\frac{n}{2}\}}\Big(\frac{\gamma-1}{\gamma} \big(\widetilde C_{\gamma+1, n}\big)^{\frac{1}{\gamma+1}}\Big)|\chi|\mu,
\end{equation}
where $\widetilde C_{\gamma+1,n}$ is  a positive constant which is corresponding to the maximal  regularity of  the parabolic equation
$$
\begin{cases}
\tau v_t=\Delta v-v+g(t),\quad x\in\Omega\cr
{\frac{\partial v}{\partial n}=0,\quad x\in\partial\Omega}
\end{cases}
$$
(see \cite{Cao, HiPr, ZhLiBaZo}, and see  Lemma \ref{maximal-regularity-lm} about the maximal regularity of parabolic equations on the whole space).
Note that
$$
\inf_{\gamma>\max\{1,\frac{n}{2}\}}\Big(\frac{\gamma-1}{\gamma} \big(\widetilde C_{\gamma+1,n}\big)^{\frac{1}{\gamma+1}}\Big)|\chi|\mu\le \frac{(n-2)_+}{n} \Big(\widetilde C_{\frac{n}{2}+1,n}\Big)^{\frac{2}{n+2}} |\chi|\mu.
$$
It then also follows that finite-time blow-up does not occur when $n=1,2$.

\item[{\bf --}] It remains open whether finite-time blow-up does not occur when $b>0$.

\item  Consider \eqref{special-eq3}+\eqref{special-boundary-cond} with $\Omega\subset\R^n$ being a bounded smooth domain.

\item[{\bf --}]  Assume  that $a=b=0$.  It is known that finite-time blow-up does not occur when $n=1$
 and may occur when $n\ge 2$ (see \cite{HeVe1}, \cite{JaLu}, \cite{Nag}, etc.).

\item[{\bf --}] Assume that $a,b>0$.   $\lambda=\mu=1$, and $\Omega$ is bounded,  if $b>\frac{(n-2)_+}{n}\chi$, then
classical solutions of \eqref{special-eq3} with positive initials exist globally and stay bounded (see \cite[Theorem 2.5]{TeWi}). So the global existence is automatic if $n=1$ and $n=2$. The proof involves the Gagliardo-Nirenberg inequality, see (2.24) in \cite{TeWi}. If $\lambda,\mu$ are positive constants, the condition becomes
\begin{equation}
\label{bounded-domain-cond3}
b>\frac{(n-2)_+}{n}\chi\mu.
\end{equation}

\item[{\bf --}]   It remains open whether finite-time blow-up does not occur when $b>0$.

\end{itemize}

\medskip

In contrast to the bounded domain case,  the study of \eqref{special-eq1}-\eqref{special-eq3} with unbounded $\Omega$ is much less. To the best of our knowledge, regarding global well-posedness on the whole space  {with non-integrable initial functions}, only the problem of \eqref{special-eq2} with $\tau=1$ was discussed before in \cite{ShXu1, ShXu2}, see Remark \ref{main-eq2-rk}, {and the problem of \eqref{special-eq3} was discussed before in \cite{SaSh0,SaSh1,SaSh2}, see Remark \ref{main-eq3-rk}}. Furthermore, the recent paper \cite{HeRe} about travelling waves of \eqref{special-eq3} highlighted the complexity of this Cauchy problem.
The current paper is devoted to the study of global existence and boundedness  of classical solutions
of \eqref{special-eq1}-\eqref{special-eq3} when $\Omega=\R^n$, that is,
 the following chemotaxis models on the whole space $\R^n$,
\begin{equation}
\label{main-eq1}
\begin{cases}
u_t= \Delta u-\chi \nabla \cdot(u\nabla v)+u(a-bu),\quad x\in \R^n\cr
\tau v_t= \Delta v-uv,\quad x\in\R^n\cr
u(0,x)=u_0(x),\quad v(0,x)=v_0(x),\quad x\in \R^n,
\end{cases}
\end{equation}
\begin{equation}
\label{main-eq2}
\begin{cases}
u_t= \Delta u-\chi \nabla \cdot(u\nabla v)+u(a-bu),\quad x\in \R^n\cr
\tau v_t= \Delta v-\lambda v +\mu u,\quad x\in\R^n\cr
u(0,x)=u_0(x), \quad v(0,x)=v_0(x),\quad x\in\R^n,
\end{cases}
\end{equation}
and
\begin{equation}
\label{main-eq3}
\begin{cases}
u_t= \Delta u-\chi \nabla \cdot(u\nabla v)+u(a-bu),\quad x\in \R^n\cr
0= \Delta v-\lambda v +\mu u,\quad x\in\R^n\cr
u(0,x)=u_0(x),\quad x\in\R^n,
\end{cases}
\end{equation}
where $\chi\in\R$ and  $a,b,\tau>0$ are constants, and $u_0,v_0$ are proper non-negative functions on $\R^n$.   In the rest of this introduction, we introduce some definitions and  standing notations, state the main results, and provide some remarks about the main results and their proofs in subsections \ref{notations}, \ref{main-thms}, and \ref{remarks}, respectively.

\subsection{Definitions and notations}
\label{notations}

We first give the definition of classical solutions of \eqref{main-eq1} (resp. \eqref{main-eq3}, \eqref{main-eq2}). We call  $(u,v)$ a classical solution of \eqref{main-eq1} (resp. \eqref{main-eq3}, \eqref{main-eq2}) on $[0,T)$ if
$(u,v)\in {C^{2,1}((0,T)\times \R^n)}$ and satisfies \eqref{main-eq1}
(resp. \eqref{main-eq3}, \eqref{main-eq2}) for $(t,x)\in (0,T)\times \R^n$ in the classical sense. A classical solution of \eqref{main-eq1} (resp. \eqref{main-eq3}, \eqref{main-eq2})
$(0, T)$ is called nonnegative if  $u(t,x)\ge 0$ and $v(t,x)\ge 0$ for all
$(t,x)\in(0,T)\times\R^n$. A global classical solution of \eqref{main-eq1} (resp. \eqref{main-eq3}, \eqref{main-eq2}) is a classical solution on $(0,\infty)$.

Let
$$
C_{\rm unif}^b(\R^n)=\{u\in C(\R^n)\,|\, u\,\, \text{it uniformly continuous in}\,\, x\in\R^n\,\, \text{and}\,\, \sup_{x\in\R^n}|u(x)|<\infty\}
$$
equipped with the norm $\|u\|_{C_{\rm unif}^b(\R^n)}:=\|u\|_\infty=\sup_{x\in\R^n}|u(x)|$,  and
$$
C_{\rm unif}^{m,b}=\{u\in C_{\rm unif}^b(\R^n)\,|\, \frac{\partial^k u}{\partial x_{i_1} \partial x_{i_2}\cdots \partial x_{i_k}} \in C_{\rm unif}^b(\R^n)\,\,
{\rm for}\,\, k=1,2,\cdots, m,  1\le i_1,i_2,\cdots,i_k\le n\}
$$
equipped with the norm $\|u\|_{C_{\rm unif}^{m,b}}:=\|u\|_\infty+\sum_{k=1}^m \sum_{1\le i_1,i_2,\cdots,i_k\le n} \|\frac{\partial^k u}{\partial x_{i_1} \partial x_{i_2}\cdots \partial x_{i_k}}\|_\infty$, where $m\in\mathbb{N}$.

For given $0<\nu<1$ and $m\ge 1$, let
\begin{equation*}
\label{holder-cont-space}
C^{\nu, b}_{\rm unif}(\R^n)=\{u\in C_{\rm unif}^b(\R^n)\,|\, \sup_{x,y\in\R^n,x\not =y}\frac{|u(x)-u(y)|}{|x-y|^\nu}<\infty\}
\end{equation*}
with the norm $\|u\|_{\infty,\nu}=\sup_{x\in\R^n}|u(x)|+\sup_{x,y\in\R^n,x\not =y}\frac{|u(x)-u(y)|}{|x-y|^\nu}$, and
\begin{equation*}
C^{m,\nu,b}_{\rm unif}(\R^n)=\{u\in C_{\rm unif}^{m,b}(\R^n)\,|\,  \frac{\partial^m u}{\partial x_{i_1} \partial x_{i_2}\cdots \partial x_{i_m}}\in C^{\nu,b}_{\rm unif}(\R^n),\,\, 1\le i_1,i_2,\cdots,i_m\le n\}
\end{equation*}
with the norm $\|u\|_{m,\nu,b}=\|u\|_{C_{\rm unif}^{m,b}(\R^n)}+\sum_{1\le i_1,i_2,\cdots,i_m\le n}\| \frac{\partial^m u}{\partial x_{i_1} \partial x_{i_2}\cdots \partial x_{i_m}} \|_{C^{\nu,b}_{\rm unif}(\R^n)}$.

For $0<\theta<1$, let
{\small\begin{align*}
&C^{\theta}((t_1,t_2),C_{\rm unif}^{\nu,b}(\R^n))\\
&=\{ u(\cdot)\in C((t_1,t_2),C_{\rm unif}^{\nu,b}(\R^n))\,|\, u(t)\,\, \text{is locally H\"{o}lder continuous with exponent}\,\, \theta\}.
\end{align*}}

Considering chemotaxis models on the whole space $\R^n$, it is important to study the solutions with initial functions in  $L^q(\R^n)$  as well as  $C_{\rm unif}^b(\R^n)$.
 Note that, the local existence of classical solutions of
\eqref{main-eq1} (resp. \eqref{main-eq2}, \eqref{main-eq3}) with initial functions
in $L^q(\R^n)$  or  $C_{\rm unif}^b(\R^n)$ can be proved by quite standard methods.
The following results on the local existence of classical solutions of \eqref{main-eq1}, \eqref{main-eq2}, and \eqref{main-eq3} will be presented in subsection \ref{local-solu}.

\begin{itemize}

\item For given $u_0\in C_{\rm unif}^b(\R^n)$ and $v_0\in C_{\rm unif}^{1,b}(\R^n)$ or
$u_0\in L^q(\R^n)$ and $v_0\in W^{1,q}(\R^n)$ for some $q>n$ and $q\ge 2$, there is
$T_{\max}(u_0,v_0)\in (0,\infty]$ such that \eqref{main-eq1} (resp. \eqref{main-eq2}) has a unique classical solution, denoted by
$(u(t,x;u_0,v_0),v(t,x;u_0,v_0))$,  on $(0,T_{\max}(u_0,v_0))$ satisfying
$$
\lim_{t \to 0}\|u(t,\cdot;u_0)-u_0(\cdot)\|_X+\|v(t,\cdot;u_0,v_0)-v_0(\cdot)\|_Y=0,
$$
where $X=C_{\rm unif}^b(\R^n)$ and $Y=C_{\rm unif}^{1,b}(\R^n)$ or
$X=L^q(\R^n)$ and $Y=W^{1, {q}}(\R^n)$, moreover, if $T_{\max}(u_0,v_0)<\infty$, then
$$
\lim_{t \to T_{\max}(u_0,v_0)-}\|u(t,\cdot;u_0,v_0)\|_X+\|v(t,\cdot;u_0,v_0)\|_Y=\infty
$$
(see Proposition \ref{local-existence-prop1} for detail).

\item   For given $u_0\in C_{\rm unif}^b(\R^n)$  or
$u_0\in L^q(\R^n)$  for some $q>n$ and $q\ge 2$, there is
$T_{\max}(u_0)\in (0,\infty]$ such that \eqref{main-eq3} has a unique classical solution, denoted by,
$(u(t,x;u_0),v(t,x;u_0))$ on $(0,T_{\max}(u_0))$ satisfying
$$
\lim_{t \to 0}\|u(t,\cdot;u_0)-u_0(\cdot)\|_X=0,
$$
where $X=C_{\rm unif}^b(\R^n)$  or
$X=L^q(\R^n)$, moreover, if $T_{\max}(u_0)<\infty$, then
$$
\lim_{t \to T_{\max}(u_0)-}\|u(t,\cdot;u_0)\|_X=\infty
$$
(see Proposition \ref{local-existence-prop2} for detail).

\end{itemize}

\subsection{Statements of the main results}
\label{main-thms}

In this subsection, we state the main results of the paper.
Throughout this subsection, $q$ is a positive number satisfying $q>n$ and $q\ge 2$.
$(u(t,x;u_0,v_0)$, $v(t,x;u_0,v_0))$ denotes the classical solution of \eqref{main-eq1} or \eqref{main-eq2} on  the maximal interval $(0,T_{\max}(u_0,v_0))$ with  $u_0\in C_{\rm unif}^b(\R^n)$ and $v_0\in C_{\rm unif}^{1,b}(\R^n)$ or
$u_0\in L^q(\R^n)$ and $v_0\in W^{1,q}(\R^n)$, and
$u_0\ge 0$, $v_0\ge 0$. $(u(t,x;u_0),v(t,x;u_0))$ denotes the classical solution of \eqref{main-eq3} on the maximal interval $(0,T_{\max}(u_0))$ with   $u_0\in C_{\rm unif}^b(\R^n)$  or
 $u_0\in L^q(\R^n)$, and $u_0\ge 0$.

Our first main result provides a general condition for the global existence and boundedness of positive classical solutions of \eqref{main-eq1}-\eqref{main-eq3}.

\begin{tm}
\label{general-global-existence-thm}
Let  $B(x_0,1)$ be the  unit ball in $\R^n$ centered at $x_0$ with radius $1$.
Let
$$(u(t,x),v(t,x))=(u(t,x;u_0,v_0),v(t,x;u_0,v_0))$$
 or $$(u(t,x),v(t,x))=(u(t,x;u_0), v(t,x;u_0)),
$$
 and
$$T_{\max}=T_{\max}(u_0,v_0)\quad {\rm or}
\quad T_{\max}=T_{\max}(u_0).
$$
If there is $p>\max\{1,\frac{n}{2}\}$  such that
\begin{equation}
\label{general-cond-eq00}
\limsup_{t\to T_{\max}-} \,\, \sup_{x_0\in\R^n}\int_{B(x_0,1)}u^p(t, x)dx<\infty,
\end{equation}
then $T_{\max}=\infty$ and
$
\limsup_{t\to\infty}\|u(t,\cdot)\|_\infty<\infty.
$
\end{tm}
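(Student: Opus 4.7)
The plan is to bootstrap the uniformly-local $L^p$ hypothesis on $u$ (with $p>\max\{1,n/2\}$) to a uniform $L^\infty$ bound via a Moser-type iteration, handling the three $v$-equations in parallel through a single intermediate regularity step. First I would convert the local $L^p$ control of $u$ into local control of $\nabla v$: the assumption $p>n/2$ is exactly what is needed for $W^{2,p}_{\rm uloc}\hookrightarrow W^{1,q^\ast}_{\rm uloc}$ with some $q^\ast>n$, giving a supercritical bound on $\nabla v$. For \eqref{main-eq3} this is immediate from elliptic regularity applied to $(\lambda-\Delta)v=\mu u$; for \eqref{main-eq2} it follows from a uniformly-local maximal parabolic regularity estimate on $\R^n$ (the whole-space counterpart of the lemma cited near \eqref{bounded-domain-cond2-2}); for \eqref{main-eq1} the maximum principle yields $\|v(t)\|_\infty\le\|v_0\|_\infty$ for free, and a localized energy estimate on $\tau v_t=\Delta v-uv$ produces the analogous control on $\nabla v$. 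The net output is a uniform bound on $\|\nabla v(t)\|_{L^{q^\ast}_{\rm uloc}}$ on $(0,T_{\max})$ in terms of $M:=\sup_{t<T_{\max}}\sup_{x_0}\|u(t,\cdot)\|_{L^p(B(x_0,1))}$.

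Next I would derive uniformly-local $L^q$ estimates for $u$ for every finite $q\ge p$. Fixing a smooth cutoff $\phi_{x_0}$ supported in $B(x_0,2)$ with $\phi_{x_0}\equiv 1$ on $B(x_0,1)$ and testing the $u$-equation against $\phi_{x_0}u^{q-1}$, integration by parts yields an inequality of the schematic form
\begin{equation*}
\frac{d}{dt}\!\int\!\phi_{x_0}u^q+\frac{c}{q}\!\int\!\phi_{x_0}|\nabla u^{q/2}|^2\le Cq|\chi|\!\int\!\phi_{x_0}u^{q/2}|\nabla u^{q/2}||\nabla v|+Cq\!\int\!\phi_{x_0}u^q-bq\!\int\!\phi_{x_0}u^{q+1}+\mathcal E_{x_0},
\end{equation*}
with $\mathcal E_{x_0}$ collecting the cutoff errors. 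Cauchy--Schwarz/Young applied to the chemotaxis term, together with the $\nabla v\in L^{q^\ast}_{\rm uloc}$ bound from the first step, absorbs part of it into the dissipation and leaves a lower-order $u^{q'}$ contribution controlled by the logistic sink. Sobolev embedding of the dissipation combined with interpolation against the $L^p_{\rm uloc}$ bound then closes this into
\begin{equation*}
\frac{d}{dt}\sup_{x_0}\!\int_{B(x_0,1)}\!u^q\,dx+C_1\sup_{x_0}\!\int_{B(x_0,1)}\!u^q\,dx\le C_2(M,q),
\end{equation*}
providing uniformly-local $L^q$ bounds for every finite $q$ with constants depending only on $M$ and $q$.

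Finally, iterating Moser-style (or, equivalently, once $q$ is large enough that $\nabla v\in L^\infty_{\rm uloc}$ by Sobolev, using the mild formulation $u(t)=e^{t\Delta}u_0-\chi\int_0^t e^{(t-s)\Delta}\nabla\!\cdot\!(u\nabla v)\,ds+\int_0^t e^{(t-s)\Delta}u(a-bu)\,ds$ together with heat-kernel smoothing estimates) upgrades $L^q_{\rm uloc}$ control to $L^\infty$. Combined with the blow-up criteria in Propositions \ref{local-existence-prop1}--\ref{local-existence-prop2}, this forces $T_{\max}=\infty$, and the long-time behavior of the differential inequality above yields $\limsup_{t\to\infty}\|u(t)\|_\infty<\infty$. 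I expect the main obstacle to be the first step: extracting a genuinely uniform regularity statement for $\nabla v$ that works across three structurally different systems (pure consumption, linear parabolic production, linear elliptic production) within one framework, and ensuring that every constant in the argument depends only on $M$ and not on the center $x_0\in\R^n$, which requires carefully constructed uniformly-local analogues of the standard whole-space parabolic and elliptic estimates.
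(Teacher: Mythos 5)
Your high-level plan --- convert the uniformly-local $L^p$ bound on $u$ into $W^{1,q^*}_{\rm uloc}$ regularity of $v$ with some $q^*>n$, bootstrap that into a uniformly-local $L^\gamma$ bound on $u$ for some $\gamma>n$, then pass to $L^\infty$ via the mild formulation and invoke the blow-up criterion --- is precisely the paper's strategy (Propositions \ref{basic-prop2}, \ref{basic-prop3}, \ref{basic-prop4}, then a generalized Gronwall argument with Lemma \ref{L-infinity-bound}). You also correctly flag the first step as the main difficulty. But your proposed implementation of that step has a genuine gap, and it is exactly where you expected trouble.

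You propose to localize with compactly supported cutoffs $\phi_{x_0}$ and appeal to maximal parabolic regularity or energy estimates for the $v$-equation. For the parabolic systems \eqref{main-eq1} and \eqref{main-eq2}, the localized $v$-equation reads $\tau(\phi v)_t=\Delta(\phi v)-2\nabla v\cdot\nabla\phi-v\Delta\phi+\cdots$, and the commutator terms $2\nabla v\cdot\nabla\phi$ and $v\Delta\phi$ are of the \emph{same} $L^p_{\rm uloc}$ order as the quantity $\nabla v$ you are trying to estimate, with a coefficient $\|\nabla\phi\|_\infty=O(1)$. There is no small parameter to absorb them: whatever Duhamel or energy estimate you run, the commutator contribution on the annulus is comparable to the left-hand side. (Stretching the cutoff to $B(x_0,R)$ gives a factor $O(1/R)$ but forces you to control $\|\nabla v\|_{L^p(B(x_0,R))}$, which costs $R^{n/p}$ unit balls; this only closes if $p>n$, not $p>n/2$.) The paper resolves this by replacing $\phi_{x_0}$ with the globally positive, exponentially decaying weight $\psi$ of Lemma \ref{psi-lm}, chosen so that $|\nabla\psi|\le\kappa_1\psi$ and $|\Delta\psi|\le\kappa_1\psi$ for $\kappa_1$ arbitrarily small: then the commutators carry the small factor $\kappa_1$ and can be absorbed into the semigroup Duhamel estimate (see \eqref{main-eq1-psi}--\eqref{main-eq3-estimate-eq2} and the absorption around \eqref{proof-prop2-eq4}). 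This weight is exactly the novel device the paper highlights in Remark \ref{general-rk2}, and it is what your proposal is missing. Secondarily, claiming $L^q_{\rm uloc}$ bounds on $u$ for \emph{every} finite $q$ in a single step overstates what the Gagliardo--Nirenberg absorption yields (Proposition \ref{basic-prop3} only pushes from $p$ to one $\gamma\in(n,2p)$, which suffices), and one cannot literally form a differential inequality for $\sup_{x_0}\int_{B(x_0,1)}u^q$; but these are cosmetic next to the missing localization device.
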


Next theorem provides sufficient conditions for the global existence and boundedeness of classical solutions of \eqref{main-eq1}.

\begin{tm}
\label{main-thm1}
Consider \eqref{main-eq1}.
For given  $u_0\in C_{\rm unif}^b(\R^n)$ and $v_0\in C_{\rm unif}^{1,b}(\R^n)$ or
 $u_0\in L^q(\R^n)$ and $v_0\in W^{1,q}(\R^n)$ and $u_0,v_0\ge 0$,  assume that
\begin{equation}
\label{cond-eq1}
|\chi|\cdot \|v_0\|_\infty <\max\Big\{  b \cdot {C^*_{n}},\, {D^*_{\tau,n}}\Big\} ,
\end{equation}
where
\begin{equation}
\label{c-tau-n-eq1}
{{ C^*_{n}}:=\sup_{\gamma>\max\{1,n/2\}}\frac{\gamma}{\gamma-1}\Big({ C_{\gamma+1,n}}\Big)^{-\frac{1}{\gamma+1}},}
\end{equation}
 $C_{\gamma+1,n}$ is {given in Lemma \ref{maximal-regularity-lm}}, 
and
\begin{equation}
\label{d-tau-n-eq}
D^*_{\tau,n}:=\begin{cases}  \frac{2}{\tau n^* }\left(2\sqrt{\frac{(\tau^*)^2}4+\frac1{\tau n^*}}+j|\tau^*|\right)^{-1}\quad &{\rm if}\quad \sqrt{\frac{(\tau^*)^2}4+\frac1{\tau n^*}}>-j|\tau^*|\cr\cr
\frac{2}{ \tau n^*}\left(\sqrt{\frac{(\tau^*)^2}4+\frac1{\tau n^*}}\right)^{-1}\quad &{\rm if}\quad \sqrt{\frac{(\tau^*)^2}4+\frac1{\tau n^*}}\leq -j|\tau^*|
\end{cases}
\end{equation}
with $n^*:=\max\{1,\frac{n}{2}\}$, $
\tau^*:=\frac{1}{\tau}-1$, and $j:=\text{\rm Sign}(\chi\tau^*)$ is the sign of $\chi\tau^*$.
 Then the classical solution
$(u(t,x;u_0,v_0)$, $v(t,x;u_0,v_0))$ of \eqref{main-eq1} exists for all $t>0$,  and  $\|u(t,\cdot;u_0,v_0)\|_\infty$ and $\|\nabla v(t,\cdot;u_0,v_0)\|_\infty$ stays bounded as $t\to\infty$.
\end{tm}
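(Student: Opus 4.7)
By Theorem~\ref{general-global-existence-thm}, the task reduces to producing an exponent $p > \max\{1, n/2\}$ for which
$$
\sup_{t \in (0, T_{\max}(u_0,v_0))} \; \sup_{x_0 \in \R^n} \int_{B(x_0,1)} u^p(t,x)\, dx < \infty.
$$
Two preliminary facts are used throughout. First, the parabolic maximum principle applied to the consumption equation $\tau v_t = \Delta v - uv$ with $u \ge 0$ yields the uniform bound $0 \le v(t,\cdot) \le \|v_0\|_\infty$ for every $t \in [0, T_{\max})$. Second, the localized $L^p$ estimates I need are naturally encoded by tests of the form $u^{p-1}\phi_{x_0}$, where $\phi_{x_0}$ is a smooth nonnegative cutoff equal to $1$ on $B(x_0,1)$ and supported in $B(x_0,2)$, uniformly in $x_0$.

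The proof splits according to which branch of \eqref{cond-eq1} is active. For the branch $|\chi|\|v_0\|_\infty < b\, C^*_{n}$, I would introduce $V := \|v_0\|_\infty - v \ge 0$; a direct computation gives
$$
\tau V_t = \Delta V - uV + \|v_0\|_\infty u,
$$
so dropping the nonpositive term $-uV$ casts $V$ as a super-solution of a linear signal-production equation with effective coupling $\mu_{\rm eff}=\|v_0\|_\infty$. Since $\nabla v = -\nabla V$, the $u$-equation rewrites as $u_t = \Delta u + \chi \nabla \cdot (u\nabla V) + u(a-bu)$, structurally matching \eqref{main-eq2} with sensitivity $-\chi$. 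The standard $L^\gamma$ argument for the production model then transplants to $\R^n$: I would test against $u^{\gamma-1}\phi_{x_0}$, integrate by parts, apply the whole-space maximal regularity Lemma~\ref{maximal-regularity-lm} to the $V$-inequality (after a harmless shift by $\lambda V$ that is controlled by $V \le \|v_0\|_\infty$), and optimize over $\gamma > \max\{1, n/2\}$. The compatibility of the resulting differential inequality with global bounds is exactly the threshold $b\, C^*_n$, recovering \eqref{bounded-domain-cond2-2} adapted to the whole space.

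The second branch $|\chi|\|v_0\|_\infty < D^*_{\tau,n}$ does not involve $b$, so the reduction above is not sharp. Instead I would construct a two-component localized functional of the form
$$
\Phi_{x_0}(t) = \int_{\R^n} u^p \phi_{x_0} + A \int_{\R^n} |\nabla v|^{2}\, \phi_{x_0},
$$
with exponent $p > \max\{1, n/2\}$ and coupling $A > 0$ to be optimized. Differentiating along \eqref{main-eq1} produces a chemotactic cross-term $-\chi(p-1)\int u^{p-1}\nabla u \cdot \nabla v\, \phi_{x_0}$ from the $u$-equation, paired with $\tau \partial_t|\nabla v|^2 = \Delta|\nabla v|^2 - 2|D^2 v|^2 - 2v\,\nabla u \cdot \nabla v - 2u|\nabla v|^2$ from the consumption equation; the term $-\int u|\nabla v|^2$ has the right sign, $v \le \|v_0\|_\infty$ tames the bad cross-term $v\nabla u \cdot \nabla v$, and Young's inequality splits the chemotactic cross-term against $\int u^{p-2}|\nabla u|^2 \phi_{x_0}$ and $\int |D^2 v|^2 \phi_{x_0}$. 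Closure of the resulting inequality boils down to solvability in $A>0$ of a quadratic inequality, whose optimal $A$ produces exactly the formula \eqref{d-tau-n-eq}: the factor $1/(\tau n^*)$ records the relative weight of $u$-dissipation versus $|\nabla v|^2$-dissipation, $\tau^*=\tau^{-1}-1$ reflects the mismatch between the two time scales, and $j=\mathrm{sign}(\chi\tau^*)$ encodes whether this mismatch aids or opposes the coupling.

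The main obstacle in both cases is that working on $\R^n$ forces the use of cutoffs, which generate commutator terms such as $\int u^p \Delta\phi_{x_0}$ and $\int |\nabla v|^2\,\nabla v\cdot \nabla\phi_{x_0}$. I expect these to be absorbable into the dissipation at the cost of additive constants depending only on $\|\phi\|_{C^2}$, $a$, $b$, $\|v_0\|_\infty$ and $\|u_0\|_\infty$, since the sharp quadratic structure that isolates $C^*_n$ and $D^*_{\tau,n}$ lives entirely in the interior leading terms; getting an estimate that is uniform in $x_0 \in \R^n$ rather than a family of constants degenerating at infinity is the delicate point. Once the uniform-in-$x_0$ bound on $\int_{B(x_0,1)} u^p$ is in hand, Theorem~\ref{general-global-existence-thm} yields $T_{\max}=\infty$ and $\limsup_{t\to\infty}\|u(t,\cdot)\|_\infty<\infty$, and the bound on $\|\nabla v(t,\cdot)\|_\infty$ then follows from standard parabolic regularity applied to $\tau v_t = \Delta v - uv$ with the now $L^\infty$-bounded source $uv$.
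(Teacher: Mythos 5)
Your reduction to Theorem~\ref{general-global-existence-thm} and the overall two-branch structure are correct, but the proposal has two genuine gaps, one of localization (affecting both branches) and one structural (specific to the second branch).

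\textbf{Localization.} You work with compactly supported cutoffs $\phi_{x_0}$ equal to $1$ on $B(x_0,1)$ and supported in $B(x_0,2)$, and you correctly identify the commutator terms $\int u^p\,\Delta\phi_{x_0}$, $\int u^p\,\nabla v\cdot\nabla\phi_{x_0}$, etc., as ``the delicate point,'' but you do not resolve it, and with this cutoff it cannot be resolved: to keep $\phi_{x_0}\equiv 1$ on $B(x_0,1)$ with ${\rm supp}\,\phi_{x_0}\subset B(x_0,2)$ you cannot make $\|\nabla\phi_{x_0}\|_\infty$, $\|\Delta\phi_{x_0}\|_\infty$ small, so the commutators are of the same order as the quantity you want to bound and cannot be absorbed. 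The paper instead estimates $\int_{\R^n} u^p\,\psi$ for a fixed $C^\infty$ exponentially decaying weight $\psi>0$ satisfying $|\nabla\psi|\le\kappa_1\psi$, $|\Delta\psi|\le\kappa_1\psi$ with $\kappa_1$ arbitrarily small (Lemma~\ref{psi-lm}); then all commutators are $O(\kappa_1)\int u^p\psi$ and are absorbed, and a uniform-in-$x_0$ bound on $\int u^p(\cdot+x_0)\psi$ trivially yields the uniform $L^p_{\rm loc}$ bound. This weight is the key technical device of the whole section, and it is absent from your proposal.

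\textbf{First branch.} Your substitution $V=\|v_0\|_\infty-v$ is an attractive heuristic explaining why the effective production coefficient is $\mu_{\rm eff}=\|v_0\|_\infty$, but it is not quite what the paper does, and the details need care: the resulting equation $\tau V_t=\Delta V-uV+\|v_0\|_\infty u$ has a state-dependent decay $-uV$ rather than the constant $-\lambda V$ required by Lemma~\ref{maximal-regularity-lm}, and ``a harmless shift by $\lambda V$'' moves the defect $\lambda V-uV+\|v_0\|_\infty u=\lambda V+uv$ into the source, which still needs to be organized before the maximal-regularity constant $C_{p+1,n}$ appears with the correct coefficient $\|v_0\|_\infty$. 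The paper works directly with $v\psi_1$ (where $\psi_1=\psi^{1/(p+1)}$), which satisfies $\tau(v\psi_1)_t=\Delta(v\psi_1)-v\psi_1+(v\psi_1-2\nabla v\cdot\nabla\psi_1+v\Delta\psi_1-uv\psi_1)$, applies Lemma~\ref{maximal-regularity-lm} with $\alpha=1$, and bounds the only dangerous source term by $|uv\psi_1|\le\|v_0\|_\infty|u\psi_1|$; this is your idea in disguise, but carried out so that the sharp constant survives and the weight $\psi$ generates only absorbable errors.

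\textbf{Second branch.} The two-component functional $\Phi_{x_0}=\int u^p\phi_{x_0}+A\int|\nabla v|^2\phi_{x_0}$ does not close as written. After Young's inequality, the chemotactic cross-term $\chi(p-1)\int u^{p-1}\nabla u\cdot\nabla v\,\phi$ leaves a remainder of the form $\int u^p|\nabla v|^2\phi$, which carries the full weight $u^p$; the candidate good terms coming from $\partial_t|\nabla v|^2$ are $-2\int|D^2v|^2\phi$ and $-2\int u|\nabla v|^2\phi$, neither of which controls $\int u^p|\nabla v|^2\phi$. Moreover, the cross-term in the $|\nabla v|^2$ evolution is $v\,\nabla u\cdot\nabla v$, with a $v$-weight, while the chemotactic cross-term has a $u^{p-1}$-weight, so they cannot be matched by adjusting a single coupling $A$. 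The paper instead differentiates the single weighted functional $\int u^p\,\varphi(v)\,\psi$ with $\varphi(v)=e^{\sigma v^2}$: the derivatives $\varphi',\varphi''$ produce all the cross-terms with the correct $u^p$ weight automatically, and after Young the closure reduces to the negativity of the scalar quadratic \eqref{4.17} in $v\in[0,\|v_0\|_\infty]$, whose admissibility condition is precisely $|\chi|\|v_0\|_\infty<D^*_{\tau,n}$. Your instinct to introduce an auxiliary functional is right, but the auxiliary object should be a $v$-dependent multiplier of $u^p$, not a separate $|\nabla v|^2$ energy.
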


The following  theorem provides sufficient conditions for the global existence and boundedeness of classical solutions of \eqref{main-eq2}.

\begin{tm}
\label{main-thm2}
Consider \eqref{main-eq2}.
Suppose that $q>n$ and $q\ge 2$.
For given  $u_0\in C_{\rm unif}^b(\R^n)$ and $v_0\in C_{\rm unif}^{1,b}(\R^n)$ or
 $u_0\in L^q(\R^n)$ and $v_0\in W^{1,q}(\R^n)$,  if
\begin{equation}
\label{cond-eq2}
|\chi|\cdot  \mu < b\cdot { C^*_{n},}
\end{equation}
 then $T_{\max}(u_0,v_0)=\infty$ { and $\limsup_{t\to\infty} \|u(t,\cdot;u_0,v_0)\|_\infty<\infty$}, where { $C_n^*$ is as in \eqref{c-tau-n-eq1}.}
\end{tm}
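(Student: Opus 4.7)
The plan is to verify the hypothesis \eqref{general-cond-eq00} of Theorem \ref{general-global-existence-thm} for $p = \gamma+1$ with a suitable $\gamma$, and then invoke that theorem. Combining \eqref{cond-eq2} with the definition \eqref{c-tau-n-eq1} of $C^*_n$, pick $\gamma > \max\{1,n/2\}$ such that
\[\theta \;:=\; \frac{(\gamma-1)|\chi|\mu}{\gamma\, b}\,C_{\gamma+1,n}^{1/(\gamma+1)} \;<\; 1.\]
The goal reduces to proving that $\sup_{x_0\in\R^n}\int_{B(x_0,1)} u^{\gamma+1}(t,x)\,dx$ stays bounded on $[0, T_{\max}(u_0,v_0))$.

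Fix a translation-equivariant family of smooth cutoffs $\varphi_{x_0}\in C^\infty_c(\R^n)$ with $\mathrm{supp}\,\varphi_{x_0}\subset B(x_0,2)$, $\varphi_{x_0}\equiv 1$ on $B(x_0,1)$, and $\|\nabla^k\varphi_{x_0}\|_\infty$ uniform in $x_0$ for $k\le 2$. Testing the $u$-equation against $u^{\gamma-1}\varphi_{x_0}^{2(\gamma+1)}$ and integrating by parts on $\R^n$ yields, after standard manipulations in which cutoff cross-terms are absorbed into $(\gamma-1)\int u^{\gamma-2}|\nabla u|^2\varphi_{x_0}^{2(\gamma+1)}$,
\[\frac{1}{\gamma}\frac{d}{dt}\!\int u^\gamma\varphi_{x_0}^{2(\gamma+1)} + b\!\int u^{\gamma+1}\varphi_{x_0}^{2(\gamma+1)} \;\le\; -\frac{\chi(\gamma-1)}{\gamma}\!\int u^\gamma\Delta v\,\varphi_{x_0}^{2(\gamma+1)} + a\!\int u^\gamma\varphi_{x_0}^{2(\gamma+1)} + \mathcal R,\]
where $\mathcal R$ is controlled by $\|u\|_{L^{\gamma+1}(B(x_0,2))}^{\gamma+1}$ uniformly in $x_0$.

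The main step is to bound the chemotaxis integral by H\"older in space with conjugate exponents $(\gamma+1)/\gamma$ and $\gamma+1$, followed by H\"older in time, to get
\[\int_0^t\!\int u^\gamma|\Delta v|\,\varphi_{x_0}^{2(\gamma+1)} \;\le\; \Bigl(\int_0^t\!\int u^{\gamma+1}\varphi_{x_0}^{2(\gamma+1)}\Bigr)^{\gamma/(\gamma+1)} \Bigl(\int_0^t\!\int |\Delta v|^{\gamma+1}\varphi_{x_0}^{2(\gamma+1)}\Bigr)^{1/(\gamma+1)},\]
and then to apply Lemma \ref{maximal-regularity-lm} at exponent $\gamma+1$ to the localized $v$-equation, yielding $\int_0^t\int |\Delta v|^{\gamma+1}\varphi_{x_0}^{2(\gamma+1)} \le C_{\gamma+1,n}\mu^{\gamma+1}\int_0^t\int u^{\gamma+1}\varphi_{x_0}^{2(\gamma+1)}$ modulo $v_0$-data and cutoff corrections. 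Substituting and simplifying, the total coefficient of $\int_0^t\int u^{\gamma+1}\varphi_{x_0}^{2(\gamma+1)}$ on the right-hand side is exactly $\frac{(\gamma-1)|\chi|\mu}{\gamma}C_{\gamma+1,n}^{1/(\gamma+1)} = \theta\, b < b$, so the dissipation $b\int u^{\gamma+1}\varphi_{x_0}^{2(\gamma+1)}$ strictly absorbs it; a Gr\"onwall argument on the residual low-order terms (including the $a\int u^\gamma$ contribution, split via Young's inequality) then delivers a uniform-in-$(t,x_0)$ bound on $\int_{B(x_0,1)} u^{\gamma+1}(t,x)\,dx$ on $[0,T_{\max})$. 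Theorem \ref{general-global-existence-thm} with $p=\gamma+1$ yields $T_{\max}=\infty$ and $\limsup_{t\to\infty}\|u(t,\cdot;u_0,v_0)\|_\infty<\infty$.

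The principal obstacles are, first, preserving the sharp constant $C_{\gamma+1,n}^{1/(\gamma+1)}$: this requires using H\"older (rather than Young) on the chemotaxis term so that no spurious parameter inflates the threshold and \eqref{cond-eq2} is recovered exactly; and second, maintaining translation-uniformity on $\R^n$, which forces a cutoff-adapted version of Lemma \ref{maximal-regularity-lm} in which the $\nabla\varphi_{x_0}$ corrections to the localized $v$-equation contribute only $x_0$-uniform lower-order errors that can be swept into $\mathcal R$. Once these are handled, the remainder of the argument---absorbing $a\int u^\gamma$ by Young and closing Gr\"onwall---is routine.
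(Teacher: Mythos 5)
Your high-level strategy is the same as the paper's: verify hypothesis \eqref{general-cond-eq00} of Theorem \ref{general-global-existence-thm} by deriving a uniform-in-$x_0$ local $L^{\gamma+1}$ bound on $u$, using H\"older on the chemotaxis term together with Lemma \ref{maximal-regularity-lm} to obtain the sharp threshold $\frac{(\gamma-1)|\chi|\mu}{\gamma}C_{\gamma+1,n}^{1/(\gamma+1)}<b$. (The paper uses Young's inequality with a free parameter $r$ and then minimizes over $r$, which is the same thing as your H\"older step.)

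The gap is in the choice of localizer. You use a translation-equivariant family of compactly supported cutoffs $\varphi_{x_0}$; the paper instead uses a single, globally supported, $C^\infty$ weight $\psi$ from Lemma \ref{psi-lm} with the key property $|\nabla\psi|\le\k_1\psi$ and $|\Delta\psi|\le\k_1\psi$ for a free parameter $\k_1$ that can be made arbitrarily small. This property is what makes the argument close. For a compactly supported $\varphi_{x_0}$ there is no pointwise inequality of the form $|\nabla\varphi_{x_0}|\le\k_1\varphi_{x_0}$ with small $\k_1$ (the ratio blows up near $\partial\,\mathrm{supp}\,\varphi_{x_0}$), so the cutoff corrections do not come with small constants. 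Concretely: (i) in the $u$-energy estimate, the $\nabla u\cdot\nabla\varphi_{x_0}$ cross term, after Young, produces $C\int_{B(x_0,2)}u^{\gamma}$ with a constant $C$ of order $\|\nabla\varphi_{x_0}\|_\infty^2$, which upon taking $\sup_{x_0}$ is the same quantity you are trying to control, now with a non-small coefficient; (ii) more seriously, applying Lemma \ref{maximal-regularity-lm} to the localized $v$-equation $\tau(v\varphi_{x_0})_t=\Delta(v\varphi_{x_0})-\lambda v\varphi_{x_0}+(\mu u\varphi_{x_0}-2\nabla v\cdot\nabla\varphi_{x_0}-v\Delta\varphi_{x_0})$ places $2\nabla v\cdot\nabla\varphi_{x_0}$ and $v\Delta\varphi_{x_0}$ into the forcing. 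Unlike in the paper, where these corrections are $\le\k_1|\nabla v|\psi_1$ etc.\ and are absorbed by choosing $\k_1$ small (see the chain from \eqref{4.8}–\eqref{delta-v1}), here they carry $O(1)$ constants. Passing them back through maximal regularity inflates $C_{\gamma+1,n}^{1/(\gamma+1)}$ by a factor $>1$, so the total coefficient of $\int u^{\gamma+1}$ is strictly larger than $\theta b$, and the absorption by $b\int u^{\gamma+1}\varphi_{x_0}^{2(\gamma+1)}$ is not guaranteed under \eqref{cond-eq2} alone. Your phrase ``contribute only $x_0$-uniform lower-order errors that can be swept into $\mathcal R$'' is exactly the step that does not survive scrutiny: these terms are not lower order in $u$, and they reappear with the same exponent $\gamma+1$ and with a fixed, non-small constant. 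Replacing $\varphi_{x_0}$ by the paper's $\psi$ (and using the exponential-in-time weight $e^{\lambda(p+1)s/\tau}$ as in \eqref{proof-thm2-eq3} to ensure uniform-in-$t$ constants) fixes the gap.
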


Our last main theorem provides sufficient conditions for the global existence and boundedeness of classical solutions of \eqref{main-eq3}.

\begin{tm}
\label{main-thm3}
Consider \eqref{main-eq3}.
Suppose that $q>n$ and $q\ge 2$.
For given  $u_0\in C_{\rm unif}^b(\R^n)$  or
 $u_0\in L^q(\R^n)$,  if
\begin{equation}
\label{cond-eq3}
\chi \cdot \mu<b\cdot \frac{n}{(n-2)_+},
\end{equation}
 then for any $u_0\in C_{\rm unif}^b(\R^n)$ with $u_0\ge 0$,
$T_{\max}(u_0)=\infty$ and $\limsup_{t\to\infty}\|u(t,\cdot;u_0)\|_\infty<\infty$.
\end{tm}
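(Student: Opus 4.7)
The plan is to invoke Theorem \ref{general-global-existence-thm}: I aim to show there exists $p > \max\{1, n/2\}$ such that
$$\sup_{t \in [0, T_{\max}(u_0))} \sup_{x_0 \in \R^n} \int_{B(x_0, 1)} u^p(t, x; u_0) \, dx < \infty.$$
The hypothesis $\chi\mu < b \cdot n/(n-2)_+$ is tailored so that such a $p$ can be chosen with the additional property $c := bp - \chi(p-1)\mu > 0$; this is automatic when $\chi \leq 0$, while for $\chi > 0$ and $n \geq 3$ it suffices to take $p$ slightly above $n/2$, exploiting that $bp/(p-1) \to bn/(n-2)$ as $p \downarrow n/2$.

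For each $x_0 \in \R^n$ I would use a fixed smooth cutoff $\phi_{x_0}(x) = \phi_0(x-x_0)$, with $\phi_0 \equiv 1$ on $B(0,1)$ and $\mathrm{supp}\,\phi_0 \subset B(0,2)$, and test the $u$-equation against $\phi_{x_0}^2 u^{p-1}$. Integration by parts together with the elliptic relation $\Delta v = \lambda v - \mu u$ (used to eliminate $\Delta v$) produces a differential inequality of the schematic form
$$\frac{d}{dt}\int \phi_{x_0}^2 u^p + c_1\int \phi_{x_0}^2 |\nabla u^{p/2}|^2 + c\int \phi_{x_0}^2 u^{p+1} \leq C_1 \int_{B(x_0,2)} u^p + C_2\int_{B(x_0,2)} u^p|\nabla v| - \chi(p-1)\lambda \int \phi_{x_0}^2 u^p v,$$
where the final term has the favorable sign when $\chi \geq 0$ and, when $\chi < 0$, is handled by the Young bound $u^p v \leq \epsilon u^{p+1} + C_\epsilon v^{p+1}$ together with the uniform elliptic bound $\|v\|_{L^{p+1}_{\rm unif}} \leq C \|u\|_{L^{p+1}_{\rm unif}}$. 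The localized contributions $\int u^p$ and $\int u^p|\nabla v|$ in the first two terms are similarly split by Young as $u^p \leq \epsilon u^{p+1} + C_\epsilon$ and $u^p|\nabla v| \leq \epsilon u^{p+1} + C_\epsilon |\nabla v|^{p+1}$, absorbing the $u^{p+1}$ portions into the coercive term on the left.

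The remaining $|\nabla v|^{p+1}$-piece is treated via the representation $v = \mu G_\lambda \ast u$ with $G_\lambda$ the Bessel kernel of $-\Delta + \lambda$. Since both $G_\lambda$ and $\nabla G_\lambda$ lie in $L^1(\R^n)$, a Young-type convolution inequality in the uniformly locally $L^q$ spaces gives $\|\nabla v\|_{L^{p+1}_{\rm unif}} \leq C\|u\|_{L^{p+1}_{\rm unif}}$, reducing everything to quantities controlled by the local $L^{p+1}$ norm of $u$. A local Gagliardo-Nirenberg inequality, valid because $p > n/2$, couples $\|u\|_{L^{p+1}(B(x_0,2))}^{p+1}$ to $\int_{B(x_0,2)} |\nabla u^{p/2}|^2$ (absorbable into the first coercive term) together with powers of $\|u\|_{L^p(B(x_0,2))}^p$. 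Taking the supremum over $x_0$ and comparing with a suitable ODE for $Y(t) := \sup_{x_0}\int \phi_{x_0}^2 u^p$ then yields the desired boundedness, after which Theorem \ref{general-global-existence-thm} finishes the proof.

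The main obstacle is that the supremum over $x_0$ does not commute with the $x_0$-dependent spatial integrals appearing in the estimate, so one is forced to work with the upper Dini derivative of $Y(t)$ and to couple it carefully with the $L^{p+1}_{\rm unif}$-envelope; checking that the powers from the local Gagliardo-Nirenberg interpolation fit together with the coefficient $c$ so as to produce a coercive scalar ODE is the most delicate algebraic step, and it is precisely here that the sharp threshold $bn/(n-2)_+$ enters.
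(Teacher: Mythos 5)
Your overall plan — apply Theorem \ref{general-global-existence-thm} after deriving a uniformly-local $L^p$ bound via an energy estimate on the $u$-equation and the elliptic identity $\Delta v=\lambda v-\mu u$ — matches the paper's strategy, and you correctly identify the threshold $b>\mu\chi(p-1)/p$ for $p$ slightly above $n/2$. But the cutoff scheme you propose has a genuine gap, and you yourself put your finger on it in the last paragraph.

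The paper does \emph{not} use compactly supported cutoffs $\phi_{x_0}$. Instead it estimates $\int_{\R^n}u^p(t,x+x_0)\,\psi(x)\,dx$ with a \emph{single} smooth weight $\psi$ satisfying $0<\psi\le e^{-\kappa_2|x|}$, $|\nabla\psi|\le\kappa_1\psi$ and $|\Delta\psi|\le\kappa_1\psi$ with $\kappa_1$ chosen small (Lemma \ref{psi-lm}). The whole point of this weight — flagged explicitly in Remark \ref{general-rk2} — is that every "boundary'' term produced by integration by parts (those involving $\nabla\psi$ or $\Delta\psi$) is pointwise bounded by $\kappa_1\psi$, hence by $\kappa_1$ times the \emph{same} weighted integral already present on both sides, and can therefore be absorbed once $\kappa_1$ is small. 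Similarly, the $|\nabla v|^{p+1}\psi$-term is controlled via \eqref{main-eq3-estimate-eq1}--\eqref{main-eq3-estimate-eq2} (derived from the resolvent representation $v\psi=\int_0^\infty e^{(\Delta-\lambda I)s}(\dots)\,ds$, which is the $\psi$-weighted analogue of your Bessel-kernel argument). The resulting inequality is a clean scalar ODE in $t$ for each fixed $x_0$, with constants uniform in $x_0$, and the conclusion follows by ODE comparison — no supremum over $x_0$ ever needs to be taken inside a differential inequality.

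With your $\phi_{x_0}$, this closure fails: after Young, the error $\epsilon\int_{B(x_0,2)}u^{p+1}$ must be absorbed into $-c\int\phi_{x_0}^2u^{p+1}$, but $\phi_{x_0}^2$ vanishes near $\partial B(x_0,2)$, so the piece of $u^{p+1}$ supported in $B(x_0,2)\setminus B(x_0,1)$ is not dominated. You try to repair this by passing to $Y(t)=\sup_{x_0}\int\phi_{x_0}^2u^p$ and coupling with the $L^{p+1}_{\rm unif}$-envelope, but that coupling is precisely where the argument is missing: a supremum over $x_0$ of a family of differential inequalities does not yield a differential inequality for $Y$ unless the right-hand side for each $x_0$ is already bounded by a function of $Y$ and the \emph{same} coercive left-hand integral, which is exactly what fails here. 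Until that step is made precise (e.g., by nested cutoffs with a geometric absorption scheme, or by switching to the paper's exponential weight), the proof is not complete. The remaining ingredients — Bessel-kernel control of $\nabla v$ in $L^{p+1}_{\rm unif}$, the local Gagliardo--Nirenberg interpolation, and the treatment of the sign of $-\lambda\chi(p-1)\int u^pv$ — are sound in spirit but are also more machinery than the paper needs: with the $\psi$-weight, no Gagliardo--Nirenberg interpolation is required at all in the proof of Theorem \ref{main-thm3}.
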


\subsection{Remarks}
\label{remarks}

\begin{rk}
\label{general-rk1}
{\rm  Consider \eqref{special-eq1}-\eqref{special-eq3} on a bounded domain $\Omega$ complemented with Neumann boundary condition \eqref{special-boundary-cond}. By the Gagliardo-Nirenberg inequality,  it can be proved that the $L^p$-boundedness of a classical solution of \eqref{special-eq1}+\eqref{special-boundary-cond} (resp. \eqref{special-eq2}+\eqref{special-boundary-cond}, \eqref{special-eq3}+\eqref{special-boundary-cond})
for some $p>\max\{1,\frac{n}{2}\}$ implies the $L^\gamma$-boundedness for
 some ${ \gamma>n}$. Then by  certain iteration procedure  (see \cite{Ali}) or semigroup theory, it can be proved that the solution exists globally and stays bounded in $L^\infty$-norm as $t\to\infty$. Hence to prove the global existence and uniform  boundedness of a positive classical solution of \eqref{special-eq1}+\eqref{special-boundary-cond} (resp. \eqref{special-eq2}+\eqref{special-boundary-cond}, \eqref{special-eq3}+\eqref{special-boundary-cond}), it suffices to prove the boundedness of its $L^p$-norm for some $p>\max\{1,\frac{n}{2}\}$.

Consider \eqref{special-eq1}-\eqref{special-eq3} on the whole space $\Omega=\R^n$.
A positive classical solution may not be in $L^p(\R^n)$ for any $1\le p<\infty$, and even it is in $L^p(\R^n)$, its $L^p$-norm cannot stay bounded. It is discovered in this paper that
the boundedness of $L^p_{\rm local}$-norm of a classical solution of \eqref{main-eq1} (resp. \eqref{main-eq2}, \eqref{main-eq3}) for some $p>\max\{1,\frac{n}{2}\}$  implies the global existence and uniform boundedness of the solution (see Theorem \ref{general-global-existence-thm}). This is a novel and important result and implies that,  to prove the global existence and uniform  boundedness of a positive classical solution of  \eqref{main-eq1} (resp. \eqref{main-eq2}, \eqref{main-eq3}),
 it suffices to prove the boundedness of its $L^p_{\rm local}$-norm for some $p>\max\{1,\frac{n}{2}\}$.

 We point out that the condition \eqref{general-cond-eq00}  is    a necessary and sufficient condition for the unique classical solution of \eqref{main-eq1} (resp. \eqref{main-eq2}, \eqref{main-eq3}) exists globally and stays bounded in $L^\infty$-norm as $t\to\infty$.
}
\end{rk}

\begin{rk}
\label{general-rk2}
{\rm
Suppose that $(u(t,x),v(t,x))$ is a positive classical solution of  \eqref{main-eq1} (resp. \eqref{main-eq2}, \eqref{main-eq3}) on the maximal $(0,T_{\max})$. Then $(u(t,x),v(t,x))$ satisfies \eqref{special-eq1} (resp. \eqref{special-eq2}, \eqref{special-eq3}) with {$\Omega=B(x_0,1)$}  for any $x_0\in\R^n$. But $(u,v)$ may not satisfy the Neumann boundary condition \eqref{special-boundary-cond} with $\Omega=B(x_0,1)$.
Because of this, it is  nontrivial to estimate the $L^p_{\rm loc}$-norm of the solution.
Several novel techniques/ideas are developed in this paper to obtain the boundedness of $L^p_{\rm local}$-norm of positive classical solutions of  \eqref{main-eq1} (resp. \eqref{main-eq2}, \eqref{main-eq3})  for some $p>\max\{1,\frac{n}{2}\}$ and to prove that  the boundedness of $L^p_{\rm local}$-norm for some $p>\max\{1,\frac{n}{2}\}$ implies the global existence and uniform boundedness.  For example,
instead of estimating the $L^p_{\rm loc}$-norm, we introduce the idea to
estimate the $L^p$-norm  of $u(t,x)\psi(x)$  for some  $C^\infty$ exponentially decay  function $\psi(x)$ (see Lemma \ref{psi-lm} for such functions).
}
\end{rk}

\begin{rk}
\label{general-rk3} {\rm
Note that
$$
{ C_{n}^*=}\sup_{\gamma>\max\{1,\frac{n}{2}\}}\frac{\gamma}{\gamma-1}\Big(C_{{ \gamma+1,n}}\Big)^{-\frac{1}{\gamma+1}}\ge \frac{n}{(n-2)_+}\Big(C_{\max\{1,\frac{n}{2}\}+1,n}\Big)^{-\frac{1}{\max\{1,\frac{n}{2}\}+1}}.
$$
Hence when $n=1,2$, \eqref{cond-eq1}, \eqref{cond-eq2}, and \eqref{cond-eq3}  are satisfied for any $\chi$ and any $v_0$. This implies that  finite-time blow-up does not occur in \eqref{main-eq1}, \eqref{main-eq2}, and \eqref{main-eq3} when $n=1,2$.
As it is mentioned in the above, such result is known for \eqref{special-eq1}-\eqref{special-eq3} when $\Omega$ is bounded and complemented with Neumann boundary condition.
 It is proved for the first time  in this paper for \eqref{special-eq1}-\eqref{special-eq3} when $\Omega=\R^n$ and initial functions may not be integrable.  }
\end{rk}

\begin{rk}
\label{main-eq1-rk}
{\rm The global existence and boundedness of positive classical solutions of
\eqref{main-eq1} is studied for the first time in this paper.
Note that the condition \eqref{cond-eq1} is equivalent to
\begin{equation}
\label{equivalent-cond-eq1-1}
|\chi|\cdot \|v_0\|_\infty < b \cdot {C^*_{n}}=b\cdot \sup _{\gamma >\max\{1,\frac{n}{2}\}}\frac{\gamma }{\gamma-1} \big(C_{{\gamma +1,n}}\big)^{-\frac{1}{\gamma+1}}
\end{equation}
or
\begin{equation}
\label{equivalent-cond-eq1-2}
|\chi|\cdot \|v_0\|_\infty < {D^*_{\tau,n}}
\end{equation}
holds.
When $n=1,2$,  condition\eqref{equivalent-cond-eq1-1} holds for any $\chi$ and $v_0$.
For fixed $\chi$ and $v_0$, \eqref{equivalent-cond-eq1-1} holds  when $b$ is sufficiently large. {The condition \eqref{equivalent-cond-eq1-1} is independent of $\tau$.}
But it is difficult to provide an explicit formula for the constant  $C^*_{n}$ in \eqref{equivalent-cond-eq1-1}. The constant  $D^*_{\tau,n}$ in \eqref{equivalent-cond-eq1-2}  is given explicitly, which is of great importance in application. But  it is not true that, when $n=1,2$,  condition \eqref{equivalent-cond-eq1-2}
holds for any $\chi$ and $v_0$.  Hence conditions \eqref{equivalent-cond-eq1-1}  and
\eqref{equivalent-cond-eq1-2} complement each other.

Note also that our method can be adapted to the bounded domain case without much effort and  several existing results for bounded domains are then improved. For example,  by the similar arguments of Theorem \ref{main-thm1}, it can be proved that \eqref{cond-eq1} is also a sufficient condition for the global existence and boundedness of the classical solution of \eqref{special-eq1} when $\Omega$ is bounded and complemented with the boundary condition \eqref{special-boundary-cond}.
{When $\tau=1$,
$$
D^*_{1, n}=\sqrt  {{2}/{n}}.
$$
The condition }
\eqref{cond-eq1} is weaker than \eqref{bounded-domain-cond1}. Hence \cite[Theorem 3.3]{WaKhKh} is improved.

It is seen that
$$
D^*_{\tau,n}=
\begin{cases}
O(\frac{1}{\sqrt n})\quad  {\rm as}\quad n\to\infty \quad   {\rm  when }\quad \tau=1\cr\cr
O(\frac{1}{n})\quad {\rm as }\quad  n\to\infty \quad {\rm when}\quad \tau\not =1.
\end{cases}
$$
Hence there is some essential difference between the decay rates of $D^*_{\tau,n}$ as $n\to\infty$ for $\tau=1$ and $\tau\not =1$.

In the appendix, we will provide some discussions on the decay rate of $C_{n}^*$, {namely we show that $C_{n}^*$ decays to $0$ at most exponentially as $n\to\infty$ in Theorem \ref{T.A.1}}.
}
\end{rk}

\begin{rk}
\label{main-eq2-rk}
{\rm
Consider \eqref{main-eq2}.  When $\tau=1$ and $\chi>0$, global existence and boundedness of classical solutions of \eqref{main-eq2} and asymptotic behavior of globally defined bounded solutions
are studied in \cite{ShXu1, ShXu2}. Among others, it is proved  that the unique classical solution of \eqref{main-eq2} with $\tau=1$, $\chi>0$, and $(u_0,v_0)\in C_{\rm unif}^b(\R^n)\times C_{\rm unif}^{1,b}(\R^n)$ ($u_0,v_0\ge 0$) exists globally and stays bounded as $t\to\infty$ provided that
$$
\chi\mu <{4b}/{n}
$$
(see \cite[Theorem 1.2]{ShXu1}).
The global existence and boundedness of positive classical solutions  of \eqref{main-eq2} with $\tau\not =1$ are studied for the first time in this paper.}
\end{rk}

\begin{rk}
\label{main-eq3-rk}
{\rm  Consider \eqref{main-eq3}.
The works \cite{SaSh0, SaSh1, SaSh2, SaShXu} carried out some systematic study on the global existence of positive classical solutions and asymptotic dynamics of globally defined
positive solutions. It is proved in \cite{SaSh0}  that  positive classical solutions of \eqref{main-eq3} with initial functions in $L^q(\R^n)$ for some $q>\max\{n,2\}$ exist globally
provided that { $\chi>0$ and} \eqref{cond-eq3} holds. {It is proved in \cite{SaSh2} that   positive classical solutions of \eqref{main-eq3} with initial functions in $C_{\rm unif}^b(\R^n)$  exist globally and stay bounded for any $\chi<0$ (see \cite[Theorem A]{SaSh2}, which also implies that  positive classical solutions of \eqref{main-eq3} with initial functions in  $L^q(\R^n)$ for some $q>\max\{n,2\}$  exist globally and stay bounded for any $\chi<0$).}
But it remains open whether globally defined positive solutions  \eqref{main-eq3} with initial functions in $L^p$
are uniformly bounded {for all time} and whether  positive classical solutions of \eqref{main-eq3} with initial functions in $C_{\rm unfi}^b(\R^n)$ exist globally
provided that  $\chi>0$ and \eqref{cond-eq3} holds. Theorem \ref{main-thm3} solves this open problems.
}
\end{rk}

\begin{rk}
\label{main-eq2-eq3-rk}
{\rm
Formally, as $\tau\to 0$, \eqref{main-eq2} becomes \eqref{main-eq3}. Theorem \ref{general-global-existence-thm} provides a unified condition for the global existence and boundedness of positive classical solutions of \eqref{main-eq2} and \eqref{main-eq3}.
But it is difficult to identify parameter regions for the global existence of solutions of \eqref{main-eq2} and \eqref{main-eq3} in a same way.
We  find the sufficient conditions  \eqref{cond-eq2} and \eqref{cond-eq3}  for the global existence of classical solutions of \eqref{main-eq2} and \eqref{main-eq3}, respectively,  by different approaches. Both  \eqref{cond-eq2} and \eqref{cond-eq3}  indicate that $\chi\mu$ is small relative to the logistic damping $b$.
But, when $\tau\to 0$,  \eqref{cond-eq2} does not become \eqref{cond-eq3}.}
\end{rk}

The rest of this paper is organized as follows. We provide some preliminary lemmas and propositions in Section 2 for the use in later sections. In Section 3, we  investigate the global existence of classical solutions of \eqref{main-eq1}, \eqref{main-eq2}, and \eqref{main-eq3} in a unified way, and prove Theorem \ref{general-global-existence-thm}.
We prove Theorems \ref{main-thm1}, \ref{main-thm2}, and \ref{main-thm3} in Sections 4, 5, and 6, respectively.  { In the appendix, we provide some discussions on
  the dimension $n$ and $\gamma$ dependencies of the constant $C_{\gamma,n}$  associated to the maximal regularity for parabolic equations in Lemma \ref{maximal-regularity-lm}.}

\section{Preliminary}

In this section, we present  some preliminary  lemmas and propositions to be used
   in the next sections, including basic properties of the analytic semigroup generated by $-\Delta+I$ on $C_{\rm unif}^b(\R^n)$ or $L^p(\R^n)$;  local existence  of classical solutions of \eqref{main-eq1}, \eqref{main-eq2}, and \eqref{main-eq3};  maximal regularity for parabolic initial-boundary
value problems; and a useful  exponentially decaying function.

\subsection{The analytic semigroup generated by $-\Delta+I$ on $C_{\rm unif}^b(\R^n)$}

In this subsection, we present some basic properties for the  analytic semigroup, denoted by $T(t)$,  generated by $-\Delta+I$ on $X:=C_{\rm unif}^b(\R^n)$.
  Observe  that
\begin{equation}
\label{semigroup-eq-0}
(T(t)u)(x)=e^{-t}(G(\cdot,t)\ast u)(x)= \int_{\R^{n}}e^{-t}G(x-y,t)u(y)dy
\end{equation}
for every $u\in X$, $t> 0$,  and $x\in\R^n$, where  $G(t,x)$ is the heat kernel defined by
\begin{equation}
\label{heat-kernel}
G(t,x)={(4\pi t)^{-\frac{n}{2}}}e^{-\frac{|x|^{2}}{4t}}.
\end{equation}

Let
 $X^{\alpha}={\rm Dom}(A^{\alpha})$ be the fractional power spaces associated with $A=I-\Delta$ on $X$  ($\alpha\in [0,\infty)$).
 We have $X^0=C_{\rm unif}^b(\R^n)$ and $X^1=C_{\rm unif}^{2,b}(\R^n)$
and
  the following continuous embeddings
\begin{eqnarray}\label{Fractional power Imbedding-0}
X^{\alpha} \subset C_{\rm unif}^{\lfloor \nu \rfloor, \nu -\lfloor \nu \rfloor,b }(\R^n) \quad \text{if} \quad 0\leq \nu < 2\alpha
\end{eqnarray}
(see \cite[Exercise 9]{Dan Henry}).
Furthermore, for $0<\delta<1$ and $\alpha\ge 0$, there is constant  $C_{\alpha}$,
such that
 \begin{equation}
\label{L-infinity-estimate-1}
 \|A^\alpha T(t)u \|_{C_{\rm unif}^b(\mathbb{R}^{n})}\leq C_\alpha  t^{-\alpha }e^{-(1-\delta)t}
\|u\|_{C_{\rm unif}^b (\mathbb{R}^{n})}
 \end{equation}
 for every $u\in C_{\rm unif}^b(\R^n)$ and  $t>0$ (see \cite[Theorem 1.4.3]{Dan Henry}).

\begin{lem}
(\cite[Lemma 3.2]{SaSh0})\label{L-infinity-bound}
 For every $t>0$, the operator $T(t)\nabla \cdot$ has a unique bounded extension on
$\Big(C_{\rm unif}^b(\R^n)\Big)^n$ satisfying
\begin{equation}
\label{L-infinity-estimate-2}
\|T(t)\nabla \cdot  u\|_{C_{\rm unif}^b(\R^{n})}\leq \frac{n}{\sqrt\pi}t^{-\frac{1}{2}}e^{-t}\|u\|_{C_{\rm unif}^b (\R^{n})} \ \ \ \forall \ u\in\big( C_{\rm unif}^b (\R^{n})\big)^n, \,\, \ \forall \ t>0.
\end{equation}
\end{lem}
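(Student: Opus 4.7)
The plan is to reduce the estimate to an $L^1$--norm computation for the spatial gradient of the heat kernel, after moving the divergence from $u$ onto the kernel via integration by parts.

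First, I would work on the dense subspace $\bigl(C_{\rm unif}^{1,b}(\R^n)\bigr)^n \subset \bigl(C_{\rm unif}^b(\R^n)\bigr)^n$ (density in the sup norm holds by standard mollification, since $u \in C_{\rm unif}^b(\R^n)$ can be approximated uniformly by $\varphi_\varepsilon \ast u$). For $u = (u_1, \dots, u_n)$ in this subspace, the classical formula \eqref{semigroup-eq-0} gives
\begin{equation*}
\bigl(T(t)\,\nabla\cdot u\bigr)(x) = e^{-t} \sum_{i=1}^n \int_{\R^n} G(x-y,t)\,\partial_{y_i} u_i(y)\,dy.
\end{equation*}
Since $G(x-y,t)$ decays Gaussianly in $y$ and $u_i$ is bounded, integration by parts is legitimate and, using $\partial_{y_i} G(x-y,t) = -(\partial_i G)(x-y,t)$, it transfers the derivative to the kernel:
\begin{equation*}
\bigl(T(t)\,\nabla\cdot u\bigr)(x) = e^{-t} \sum_{i=1}^n \int_{\R^n} (\partial_i G)(x-y,t)\,u_i(y)\,dy.
\end{equation*}
This identity is then taken as the \emph{definition} of $T(t)\,\nabla\cdot u$ for arbitrary $u \in \bigl(C_{\rm unif}^b(\R^n)\bigr)^n$, and the density argument above gives the required uniqueness of the extension.

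Next I would bound each term by $L^\infty$-$L^1$ convolution: $\bigl|\bigl(T(t)\,\nabla\cdot u\bigr)(x)\bigr| \le e^{-t} \sum_{i=1}^n \|\partial_i G(\cdot,t)\|_{L^1(\R^n)} \|u_i\|_\infty$. The heart of the proof is then the explicit computation of $\|\partial_i G(\cdot,t)\|_{L^1(\R^n)}$. From $\partial_i G(x,t) = -\frac{x_i}{2t}(4\pi t)^{-n/2} e^{-|x|^2/(4t)}$, a change of variable $x = \sqrt{t}\,y$ yields
\begin{equation*}
\|\partial_i G(\cdot,t)\|_{L^1(\R^n)} = \frac{1}{2\sqrt{t}}\,(4\pi)^{-n/2} \int_{\R^n} |y_i|\,e^{-|y|^2/4}\,dy,
\end{equation*}
and separating variables with $\int_{\R} e^{-s^2/4}\,ds = 2\sqrt{\pi}$ and $\int_{\R} |s|\,e^{-s^2/4}\,ds = 4$ gives $\|\partial_i G(\cdot,t)\|_{L^1(\R^n)} = \frac{1}{\sqrt{\pi t}}$, independently of $i$.

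Combining the two, since $\sum_i \|u_i\|_\infty \le n\,\|u\|_{C_{\rm unif}^b(\R^n)}$, yields the asserted bound $\|T(t)\,\nabla\cdot u\|_{C_{\rm unif}^b(\R^n)} \le \frac{n}{\sqrt{\pi}}\,t^{-1/2}\,e^{-t}\,\|u\|_{C_{\rm unif}^b(\R^n)}$. There is no real obstacle here; the only subtlety is making the integration by parts rigorous on the dense subclass before extending, and verifying that the convolution formula with $\partial_i G$ produces a function that is indeed bounded and uniformly continuous (uniform continuity follows since $\partial_i G(\cdot,t) \in L^1(\R^n)$ and the translation operator is continuous on $L^1$).
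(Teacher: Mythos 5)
Your proof is correct and takes the standard approach (which is also what the cited source \cite[Lemma 3.2]{SaSh0} does): pass the divergence onto the kernel on a dense subspace, compute $\|\partial_i G(\cdot,t)\|_{L^1} = (\pi t)^{-1/2}$ exactly, and extend by density. The paper itself does not reprove the lemma — it cites Salako--Shen — so there is no alternative paper argument to compare against; your calculation of the $L^1$-norm of $\partial_i G$ and the final bookkeeping $\sum_i \|u_i\|_\infty \le n\|u\|$ both check out, and the remarks on density of $C^{1,b}_{\rm unif}$ and on uniform continuity of the convolution output are exactly the points that need to be said.
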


\subsection{The analytic semigroup generated by $-\Delta+I$ on $L^p(\mathbb{R}^n)$}

In this subsection, we present some basic properties of the analytic semigroup, denoted by $T_p(t)$,  generated by $A_p:=-\Delta+I$ on $X_p:=L^p(\mathbb{R}^n)$ $(p\ge 1$).
  We also have
\begin{equation}
\label{semigroup-eq}
(T_p(t)u)(x)=e^{-t}(G(\cdot,t)\ast u)(x)= \int_{\R^{n}}e^{-t}G(x-y,t)u(y)dy
\end{equation}
for every $u\in X_p$, $t> 0$,  and $x\in\R^n$, where  $G(t,x)$ is the heat kernel defined
in \eqref{heat-kernel}.  It follows from  the $L^p-L^q$
estimates for the convolution product   that there is  $C_{p, q}>0$ ($1\le p<q<\infty$) such that
\begin{equation}\label{Lp Estimates-2}
 \| T_p(t)u \|_{L^{q}(\mathbb{R}^{n})}\leq C_{p,q} t^{-(\frac{1}{p}-\frac{1}{q})\frac{n}{2}}e^{-t}\|u\|_{L^{p}(\mathbb{R}^{n})},
\end{equation}
and
\begin{equation}\label{Lp Estimates-3}
 \| \nabla T_p(t)u \|_{L^{q}(\mathbb{R}^{n})}\leq C_{p,q} t^{-\frac{1}{2}-(\frac{1}{p}-\frac{1}{q})\frac{n}{2}}e^{- t}\|u\|_{L^{p}(\mathbb{R}^{n})},
\end{equation}
 for every $u\in L^p(\R^n)$ and $t>0$.

\smallskip

Let
 $X_p^{\alpha}={\rm Dom}(A_p^{\alpha})$ be the fractional power spaces associated with $A_p=I-\Delta$ on $X_p$  ($\alpha\in [0,\infty)$).
 We have $X_p^0=X_p=L^p(\R^n)$ and $X_p^1=W^{2,p}(\R^n)$ ($1\leq p<\infty$)
and
  the following continuous embeddings
(see \cite[Theorem 1.6.1]{Dan Henry})
\begin{equation}
\label{Fractional power Imbedding-1}
X_p^{\alpha} \subset C_{\rm unif}^{\lfloor \nu \rfloor, \nu -\lfloor \nu \rfloor,b }(\R^n)  \quad \text{if} \quad 0\leq \nu < 2\alpha -\frac{n}{p},
\end{equation}
\begin{equation}
\label{Fractional power Imbedding-2}
X_p^{\alpha}\subset W^{1,q}(\R^n) \quad \text{if}\ \alpha>\frac{1}{2} \ \text{and}\ \frac{1}{q}>\frac{1}{p}-\frac{(2\alpha-1)}{n},
\end{equation}
and
\begin{equation}
\label{Fractional power Imbedding-3}
X_p^{\alpha}\subset L^{q}(\R^n)\quad \text{if} \ \frac{1}{q}>\frac{1}{p}-\frac{2\alpha}{n}, \ \ q\geq p.
\end{equation}
Furthermore, for $0<\delta<1$ and $\alpha\ge 0$, there is  $C_{\alpha,p,q}>0$  such that
 \begin{equation}\label{Lp Estimates}
 \|A_p^{\alpha} T_p(t)u \|_{L^q}\leq C_{\alpha,p,q} t^{-\alpha-(\frac{1}{p}-\frac{1}{q})\frac{n}{2}}e^{-(1-\delta)t}\|u\|_{L^{p}(\mathbb{R}^{n})} \ \ \text{for}\ \ 1\leq p\leq q{<} +\infty
 \end{equation}
 for every $u\in L^p(\R^n)$ and $t>0$ (see \cite[(2.12)]{SaSh0}).


\begin{lem}(\cite[Lemma 3.1]{SaSh0})\label{L_pbound} Let $p\in[1,\infty)$ and  $\{T_p(t)\}_{t>0}$ be the semigroup in \eqref{semigroup-eq} generated by $A_p$ on $L^p(\R^n)$. For every $t>0$, the operator $T_p(t) \nabla \cdot $ has a unique bounded extension on $\big(L^{p}(\R^{n})\big)^n$ satisfying
\begin{equation}\label{2.3}
\|T_p(t)\nabla \cdot  u\|_{L^{p}(\R^{n})}\leq \tilde C_{p}t^{-\frac{1}{2}}e^{-t}\|u\|_{L^{p}(\R^{n})} \ \ \ \forall \ u\in\big( L^{p}(\R^{n})\big)^n, \ \forall \ t>0,
\end{equation}
 where $\tilde C_p$ depends only on $p$ and $n$.
 Furthermore, for every $q\in [p , \infty]$, we have that $T(t)\nabla\cdot  u\in L^{q}(\R^n)$ with
\begin{equation} \label{2.4}
\|T_p(t)\nabla \cdot u\|_{L^{q}}\leq \tilde C_{p,q}t^{-\frac{1}{2}-\frac{n}{2}(\frac{1}{p}-\frac{1}{q})}e^{-t}\|u\|_{L^{p}(\R^{n})} \ \ \ \forall \ u\in \big(L^{p}(\R^{n})\big)^n, \ \forall \ t>0,
\end{equation}
where $\tilde C_{p,q}$ is constant depending only on $n$, $q$ and $p$.
\end{lem}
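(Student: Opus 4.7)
\medskip
\noindent\textbf{Proof plan.}
The key observation is that, although $\nabla\cdot u$ need not make sense as an $L^p$ function for general $u\in(L^p(\R^n))^n$, the composition $T_p(t)\nabla\cdot$ can be realized as convolution with $-e^{-t}\nabla G(\cdot,t)$, which has a smooth, integrable kernel. First, I would work on the dense subspace $(C_c^\infty(\R^n))^n$. For $u\in(C_c^\infty(\R^n))^n$ formula \eqref{semigroup-eq} and integration by parts give
\[
(T_p(t)\nabla\cdot u)(x)=e^{-t}\int_{\R^n}G(x-y,t)\,\nabla\cdot u(y)\,dy=-e^{-t}\int_{\R^n}\nabla_y G(x-y,t)\cdot u(y)\,dy,
\]
so $T_p(t)\nabla\cdot u=-e^{-t}\,(\nabla G(\cdot,t))*u$, with no boundary terms since $u$ has compact support.

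Next, I would compute $\|\nabla G(\cdot,t)\|_{L^r(\R^n)}$ for $r\in[1,\infty]$ by the explicit formula
\[
\nabla G(x,t)=-(4\pi t)^{-n/2}\,\frac{x}{2t}\,e^{-|x|^2/(4t)},
\]
and the scaling $x=\sqrt{t}\,y$, yielding
\[
\|\nabla G(\cdot,t)\|_{L^r(\R^n)}=K_{n,r}\,t^{-\tfrac12-\tfrac{n}{2}(1-\tfrac1r)}
\]
for a dimensional constant $K_{n,r}>0$. In particular $\|\nabla G(\cdot,t)\|_{L^1}=K_{n,1}t^{-1/2}$, which via Young's convolution inequality yields
\[
\|T_p(t)\nabla\cdot u\|_{L^p(\R^n)}\le e^{-t}\,\|\nabla G(\cdot,t)\|_{L^1}\,\|u\|_{L^p(\R^n)}\le \tilde C_p\, t^{-1/2}e^{-t}\|u\|_{L^p(\R^n)},
\]
which is \eqref{2.3} on the dense class $(C_c^\infty)^n$. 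A standard density argument then extends $T_p(t)\nabla\cdot$ to a unique bounded linear operator on $(L^p(\R^n))^n$ satisfying the same bound; uniqueness is automatic from density.

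For \eqref{2.4}, I would apply Young's inequality with exponents $(p,r,q)$ satisfying $1+\tfrac1q=\tfrac1p+\tfrac1r$ (so $1-\tfrac1r=\tfrac1p-\tfrac1q$, with the convention that $q=\infty$ corresponds to $r=p'$ and Hölder's inequality). This gives
\[
\|T_p(t)\nabla\cdot u\|_{L^q(\R^n)}\le e^{-t}\|\nabla G(\cdot,t)\|_{L^r(\R^n)}\|u\|_{L^p(\R^n)}\le \tilde C_{p,q}\, t^{-\tfrac12-\tfrac{n}{2}(\tfrac1p-\tfrac1q)}e^{-t}\|u\|_{L^p(\R^n)}
\]
on the dense class, and extension to all of $(L^p(\R^n))^n$ follows once more by density; the $q=\infty$ case requires only that we work with the canonical continuous representative of the convolution.

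I do not expect any genuine obstacle; the only subtlety is justifying the integration by parts and then the density extension in a way that is consistent with the definition $T_p(t)=e^{-t}G(\cdot,t)*\,\cdot\,$ for general $L^p$ inputs (so that the extension of $T_p(t)\nabla\cdot$ agrees with the convolution by $-e^{-t}\nabla G(\cdot,t)$). This is handled by noting that the convolution operator $u\mapsto -e^{-t}(\nabla G(\cdot,t)*u)$ is itself bounded on $L^p$ (by Young's inequality using $\nabla G(\cdot,t)\in L^1$) and coincides with $T_p(t)\nabla\cdot$ on the dense subspace $(C_c^\infty)^n$.
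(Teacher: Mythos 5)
The paper states this lemma as a citation to \cite[Lemma 3.1]{SaSh0} and does not reprove it, so there is no in-paper proof to compare against. Your argument — realize $T_p(t)\nabla\cdot$ as convolution with $-e^{-t}\nabla G(\cdot,t)$ via integration by parts on $(C_c^\infty)^n$, compute $\|\nabla G(\cdot,t)\|_{L^r}=K_{n,r}t^{-\frac12-\frac{n}{2}(1-\frac1r)}$ by the scaling $x=\sqrt{t}\,y$, apply Young's convolution inequality with $1+\frac1q=\frac1p+\frac1r$, and extend by density — is correct and complete, and it is the standard derivation of this class of gradient-semigroup estimates (which is what the cited reference uses as well).
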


\subsection{Local existence of classical solutions}
\label{local-solu}

In this subsection, we study the local existence and uniqueness of classical solutions of \eqref{main-eq1},  \eqref{main-eq2}, and \eqref{main-eq3}.
We have the following propositions on the local existence of classical solutions of \eqref{main-eq1},
\eqref{main-eq2}, and \eqref{main-eq3} with proper initial conditions.

\begin{prop}
\label{local-existence-prop1}
Consider \eqref{main-eq1}  (resp. \eqref{main-eq2}).
\smallskip

\noindent (1)   For given  $u_0\in C_{\rm unif}^b(\R^n)$ and $v_0\in C_{\rm unif}^{1,b}(\R^n)$,  there is $T_{\max}:=T_{\max}(u_0,v_0)\in (0,\infty]$ such that \eqref{main-eq1} (resp. \eqref{main-eq2}) has a unique classical solution
$(u(t,x;u_0,v_0),v(t,x;u_0,v_0))$ on $(0,T_{\max}(u_0,v_0))$  satisfying
\begin{equation}
\label{initial-eq1}
\lim_{t\to 0+}\Big(\|u(t,\cdot;u_0,v_0)-u_0(\cdot)\|_{\infty}+\|v(t,\cdot;u_0,v_0)-v_0(\cdot)\|_{\infty}+\|\nabla v(t,\cdot;u_0,v_0)-\nabla v_0(\cdot)\|_{\infty}\Big)=0,
\end{equation}
\begin{equation}
\label{local-1-eq1}
u(\cdot,\cdot;u_0, v_0) \in C([0, T_{\max} ), C_{\rm unif}^b(\R^n) )\cap C^1((0,T_{\max}),C_{\rm unif}^b(\R^n)),
\end{equation}

\begin{equation}
\label{local-1-eq1-1}
v(\cdot,\cdot;u_0, v_0) \in C([0, T_{\max} ), C_{\rm unif}^{1,b}(\R^n) )\cap C^1((0,T_{\max}),C_{\rm unif}^{1,b}(\R^n)),
\end{equation}

\begin{equation}
\label{local-1-eq2}
u(\cdot,\cdot;u_0, v_0),\, \partial_{x_i} u(\cdot,\cdot;u_0, v_0)\in C^\theta((0,T_{\max}),C^{\nu,b}_{\rm unif}(\R^n)),
\end{equation}

\begin{equation}
\label{local-1-eq2-1-1}
\partial^2_{x_i x_j} u(\cdot,\cdot;u_0, v_0),\, \partial_t u(\cdot,\cdot;u_0, v_0)\in C^\theta((0,T_{\max}),C^{\nu,b}_{\rm unif}(\R^n)),
\end{equation}

\begin{equation}
\label{local-1-eq2-1}
v(\cdot,\cdot;u_0, v_0),\,\, \partial_{x_i} v(\cdot,\cdot;u_0, v_0)\in C^\theta((0,T_{\max}),C^{\nu,b}_{\rm unif}(\R^n))
\end{equation}

\begin{equation}
\label{local-1-eq2-1-1-1}
\partial^2_{x_i x_j} v(\cdot,\cdot;u_0, v_0),\,\, \partial_t v(\cdot,\cdot;u_0, v_0)\in C^\theta((0,T_{\max}),C^{\nu,b}_{\rm unif}(\R^n))
\end{equation}
for all $i,j=1,2,\cdots,n$, $0<\theta\ll 1$, and  $0<\nu\ll 1$.
Moreover,  if $T_{\max}(u_0,v_0)<\infty$, then
\begin{equation}
\label{final-eq1}
\lim_{t\to T_{\max}(u_0,v_0)-}\Big(\|u(t,\cdot;u_0,v_0)\|_{\infty}+\|v(t,\cdot;u_0,v_0)\|_{\infty}+ \|\nabla v(t,\cdot;u_0,v_0)\|_{\infty}\Big)=\infty.
\end{equation}
If $u_0(x)\ge 0$ and $v_0(x)\ge 0$ for $x\in\R^n$, then $u(t,x;u_0,v_0)\ge 0$ and $v(t,x;u_0,v_0)\ge 0$ for $t\in [0,T_{\max}(u_0,v_0))$ and $x\in\R^n$.

\medskip

\noindent (2)  Suppose that $q>n$ and $q\ge 2$.  For given $u_0\in L^q(\R^n)$ and $v_0\in W^{1,q}(\R^n)$, there is $T_{\max}(u_0,v_0)\in (0,\infty]$ such that \eqref{main-eq1}
(resp. \eqref{main-eq2})  has a unique classical solution
$(u(t,x;u_0,v_0)$, $v(t,x;u_0,v_0))$  on $(0,T_{\max}(u_0,v_0))$
satisfying
\begin{equation}
\label{initial-eq2}
\lim_{t\to 0+}(\|u(t,\cdot;u_0,v_0)-u_0(\cdot)\|_{L^q}+\|v(t,\cdot;u_0,v_0)-v_0(\cdot)\|_{L^q}+\|\nabla v(t,\cdot;u_0,v_0)-\nabla v_0(\cdot)\|_{L^q})=0,
\end{equation}
\begin{equation}
\label{local-2-eq1}
u(\cdot,\cdot;u_0, v_0) \in C([0, T_{\max} ), L^q(\R^n) )\cap { C((0, T_{\max} ), C_{\rm unif}^b(\R^n) )\cap C^1((0,T_{\max}),C_{\rm unif}^b(\R^n))}
\end{equation}
\smallskip
\begin{equation}
\label{local-2-eq1-1}
v(\cdot,\cdot;u_0, v_0) \in C([0, T_{\max} ), W^{1,q}(\R^n))\cap { C((0, T_{\max} ), C_{\rm unif}^{1,b}(\R^n) )\cap C^1((0,T_{\max}),C_{\rm unif}^{1,b}(\R^n)),}
\end{equation}
and \eqref{local-1-eq2}-\eqref{local-1-eq2-1-1-1}.
Moreover,  if $T_{\max}(u_0,v_0)<\infty$, then
\begin{equation}
\label{final-eq2}
\lim_{t\to T_{\max}(u_0,v_0)-}(\|u(t,\cdot;u_0,v_0)\|_{L^q}+\|v(t,\cdot;u_0,v_0)\|_{L^q}+ \|\nabla v(t,\cdot;u_0,v_0)\|_{L^q})=\infty.
\end{equation}
If $u_0(x)\ge 0$ and $v_0(x)\ge 0$ for $x\in\R^n$, then $u(t,x;u_0,v_0)\ge 0$ and $v(t,x;u_0,v_0)\ge 0$ for $t\in [0,T_{\max}(u_0,v_0))$ and $x\in\R^n$.
\end{prop}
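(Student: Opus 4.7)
The plan is to encode both systems as abstract integral equations via Duhamel's formula and run the Banach fixed point theorem in an appropriate product space. Rewriting $u_t=\Delta u-\chi\nabla\cdot(u\nabla v)+u(a-bu)$ as $u_t+(I-\Delta)u=(a+1)u-bu^2-\chi\nabla\cdot(u\nabla v)$ and treating the $v$-equation analogously (after rescaling time by $1/\tau$ and absorbing an $I$ into the generator), mild solutions are encoded by
\begin{align*}
u(t) &= T(t)u_0+\int_0^t T(t-s)\bigl[(a+1)u-bu^2-\chi\nabla\cdot(u\nabla v)\bigr](s)\,ds,\\
v(t) &= \widetilde T(t)v_0+\int_0^t \widetilde T(t-s)\,F(u,v)(s)\,ds,
\end{align*}
where $\widetilde T(t)$ is the appropriately rescaled semigroup and $F(u,v)=(v-uv)/\tau$ for \eqref{main-eq1}, $F(u,v)=((1-\lambda)v+\mu u)/\tau$ for \eqref{main-eq2}.

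For part (1), I would run the contraction in $C([0,T];C_{\rm unif}^b(\R^n)\times C_{\rm unif}^{1,b}(\R^n))$ on a ball of radius $2(\|u_0\|_\infty+\|v_0\|_{1,\infty})$: Lemma \ref{L-infinity-bound} absorbs the divergence into the semigroup (so the convective term yields only an integrable $t^{-1/2}$ singularity), while \eqref{L-infinity-estimate-1} with $\alpha=1/2$ together with \eqref{Fractional power Imbedding-0} controls $\|\nabla v\|_\infty$, and the algebra of $C_{\rm unif}^b$ handles the zeroth-order nonlinearities. Choosing $T$ small makes the map a contraction, yielding a unique mild solution. For part (2) the same scheme runs in $C([0,T];L^q(\R^n)\times W^{1,q}(\R^n))$ with the help of Lemma \ref{L_pbound}; the hypothesis $q>n$ supplies the Sobolev embedding $W^{1,q}\hookrightarrow L^\infty$, which is precisely what is needed to estimate $u\nabla v$ in $L^q$ from $u\in L^q$ and $\nabla v\in L^\infty$, and the fractional-power smoothing \eqref{Fractional power Imbedding-1} immediately upgrades the solution to $C_{\rm unif}^b\times C_{\rm unif}^{1,b}$ for $t>0$, giving \eqref{local-2-eq1}-\eqref{local-2-eq1-1}. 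The mild-to-classical upgrade and the H\"older time regularity \eqref{local-1-eq2}-\eqref{local-1-eq2-1-1-1} is then a standard bootstrap: once $(u,v)$ is uniformly bounded in $C_{\rm unif}^b\times C_{\rm unif}^{1,b}$ on compact subintervals of $(0,T_{\max})$, analyticity of the semigroups together with interior parabolic Schauder estimates yields the full $C^{2,1}$ regularity with the stated H\"older exponents. The blow-up alternatives \eqref{final-eq1}-\eqref{final-eq2} then follow because the existence time produced by the fixed point depends only on the initial norm, so any solution whose norm remains bounded up to $T_{\max}$ can be continued, contradicting maximality.

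Nonnegativity is obtained from the parabolic weak maximum principle for bounded classical solutions on $\R^n$. For \eqref{main-eq1} the $v$-equation reads $\tau v_t-\Delta v+uv=0$ with bounded coefficient $u$, so the usual comparison $v_\eps:=v+\eps e^{Kt}$ with $K$ large and a first-contact argument gives $v\ge 0$ from $v_0\ge 0$; knowing $v\ge 0$, the rewriting $u_t=\Delta u-\chi\nabla v\cdot\nabla u+(a-\chi\Delta v-bu)u$ places $u$ in the same framework (bounded drift and bounded zero-order coefficient once $u$ is known to be a bounded classical solution), so $u\ge 0$ follows from $u_0\ge 0$. For \eqref{main-eq2} the argument is analogous and even simpler since the sources are nonnegative. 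The main technical obstacle is the convective term $\chi\nabla\cdot(u\nabla v)$, which couples $u$ and $v$ and forces a compatible choice of function spaces: $\nabla v\in L^\infty$ is required in the continuous setting, and $q>n$ in the $L^q$ setting so that Sobolev embedding provides the $L^\infty$ control of $\nabla v$ that is needed to estimate $u\nabla v$ in $L^q$. A secondary subtlety in \eqref{main-eq1} is the bilinear source $-uv$ in the $v$-equation, but this product is bounded on $C_{\rm unif}^b\times C_{\rm unif}^b$ and on $L^\infty\times L^q$ (available after instant smoothing), so the same contraction scheme applies uniformly to both systems.
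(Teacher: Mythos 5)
Your proposal follows essentially the same Banach fixed point plus bootstrap route as the paper: a Duhamel formulation with the $-\Delta+I$ semigroup, contraction in $C([0,T];C_{\rm unif}^b\times C_{\rm unif}^{1,b})$ (resp.\ $C([0,T];L^q\times W^{1,q})$) using Lemma~\ref{L-infinity-bound}/\ref{L_pbound} to absorb the divergence, interior parabolic regularity to pass from mild to classical, extension arguments giving the blow-up alternative, and the weak maximum principle for nonnegativity, so at the level of strategy there is no difference.

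One technical point in your part~(2) sketch is off. You assert that $q>n$ and Morrey's embedding $W^{1,q}(\R^n)\hookrightarrow L^\infty(\R^n)$ give $\nabla v\in L^\infty$, ``which is precisely what is needed to estimate $u\nabla v$ in $L^q$.'' From $v\in W^{1,q}$, Morrey controls $v$ in $L^\infty$, not $\nabla v$; at the fixed-point level you only have $\nabla v\in L^q$, so the estimate $\|u\nabla v\|_{L^q}\le\|u\|_{L^q}\|\nabla v\|_{\infty}$ is not available, and as written that step of the contraction would fail. The correct route, and the one the paper uses implicitly (see the $\|u^2\|_{L^{q/2}}$ term in its Step~$3'$), is to put $u\nabla v\in L^{q/2}$ by H\"older and invoke the $L^{q/2}\!\to\!L^q$ smoothing of $T_q(t)\nabla\cdot$, which produces a singularity $(t-s)^{-\frac12-\frac{n}{2q}}$; the hypothesis $q>n$ is what makes this exponent less than one (integrability of the Duhamel kernel), not what puts $\nabla v$ in $L^\infty$. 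Once this is corrected, the rest of your argument, including the instantaneous smoothing into $X_q^\beta\hookrightarrow C_{\rm unif}^b$ for $t>0$, the Schauder-type upgrade to classical solutions, the continuation argument behind \eqref{final-eq1}--\eqref{final-eq2}, and the $e^{Kt}$-comparison for nonnegativity, matches the paper's proof.
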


\begin{prop}
\label{local-existence-prop2}
Consider \eqref{main-eq3}.
\smallskip

\noindent (1)  For given  $u_0\in C_{\rm unif}^b(\R^n)$,  there is $T_{\max}:=T_{\max}(u_0)\in (0,\infty]$ such that \eqref{main-eq3} has a unique classical solution
$(u(t,x;u_0),v(t,x;u_0))$  on $(0,T_{\max}(u_0))$
satisfying
$$
\lim_{t\to 0+}
\|u(t,\cdot;u_0)-u_0(\cdot)\|_{L^\infty}=0,
$$
and
\begin{equation}
\label{local-3-eq1}
u(\cdot,\cdot;u_0) \in C([0, T_{\max} ), C_{\rm unif}^b(\R^n) )\cap C^1((0,T_{\max}),C_{\rm unif}^b(\R^n)),
\end{equation}

\begin{equation}
\label{local-3-eq2}
u(\cdot,\cdot;u_0),\, \partial_{x_i} u(\cdot,\cdot;u_0)\in C^\theta((0,T_{\max}),C^{\nu,b}_{\rm unif}(\R^n)),
\end{equation}

\begin{equation}
\label{local-3-eq3}
\partial^2_{x_i x_j} u(\cdot,\cdot;u_0),\, \partial_t u(\cdot,\cdot;u_0)\in C^\theta((0,T_{\max}),C^{\nu,b}_{\rm unif}(\R^n))
\end{equation}
for all $i,j=1,2,\cdots,n$, $0<\theta\ll 1$, and  $0<\nu\ll 1$.
Moreover,
 if $T_{\max}(u_0)<\infty$, then
$$
\lim_{t\to T_{\max}(u_0)-}\|u(t,\cdot;u_0)\|_{L^\infty}=\infty.
$$
If $u_0(x)\ge 0$ for $x\in\R^n$, then $u(t,x;u_0)\ge 0$ and $v(t,x;u_0)\ge 0$ for $t\in [0,T_{\max}(u_0,v_0))$ and $x\in\R^n$.

\medskip

\noindent (2)  Suppose that $q>n$ and $q\ge 2$. For given $u_0\in L^q(\R^n)$, there is $T_{\max}(u_0)\in (0,\infty]$ such that \eqref{main-eq3} has a unique classical solution
$(u(t,x;u_0),v(t,x;u_0))$  on $(0,T_{\max}(u_0))$ satisfying
$$
\lim_{t\to 0+}\|u(t,\cdot;u_0)-u_0(\cdot)\|_{L^q}=0,
$$
\begin{equation}
\label{local-4-eq1}
u(\cdot,\cdot;u_0) \in C([0, T_{\max} ), L^q(\R^n) )\cap  { C((0, T_{\max} ), C_{\rm unif}^b(\R^n) )\cap C^1((0,T_{\max}),C_{\rm unif}^b(\R^n)),}
\end{equation}
and \eqref{local-3-eq2}, \eqref{local-3-eq3}.
Moreover,  if $T_{\max}(u_0)<\infty$, then
$$
\lim_{t\to T_{\max}(u_0)-} \|u(t,\cdot;u_0,v_0)\|_{L^q}=\infty.
$$
If $u_0(x)\ge 0$ for $x\in\R^n$, then $u(t,x;u_0)\ge 0$ and $v(t,x;u_0)\ge 0$ for $t\in [0,T_{\max}(u_0,v_0))$ and $x\in\R^n$.
\end{prop}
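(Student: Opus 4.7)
The plan is to eliminate $v$ via the elliptic equation and then solve the resulting nonlocal scalar parabolic problem for $u$ by a standard semigroup contraction argument built on the estimates of Section~2. Since $0=\Delta v-\lambda v+\mu u$, for each fixed $t$ we may write $v(t,\cdot)=\mu\, G_\lambda\ast u(t,\cdot)$, where $G_\lambda\ge 0$ is the Bessel kernel of $\lambda I-\Delta$ on $\R^n$; $G_\lambda\in L^1(\R^n)$ decays exponentially, and $\nabla G_\lambda\in L^1(\R^n)$ with the usual mild singularity at $0$. Young's inequality together with the elliptic $L^q$-theory then yield $\|v\|_{C^{1,b}_{\rm unif}}\le C\|u\|_{C^b_{\rm unif}}$ in the continuous setting and $\|v\|_{W^{2,q}}\le C\|u\|_{L^q}$ when $u\in L^q(\R^n)$; since $q>n$, Morrey's embedding gives $\|\nabla v\|_\infty\le C\|u\|_{L^q}$ in the latter case.

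With $v$ slaved to $u$, rewrite the equation for $u$ in mild form using the semigroup $T(t)$ (or $T_q(t)$) generated by $-\Delta+I$:
\[
u(t)=T(t)u_0+\int_0^t T(t-s)\bigl[(1+a)u(s)-bu(s)^2-\chi\nabla\cdot(u(s)\nabla v(s))\bigr]\,ds,
\]
and set up a contraction on a closed ball in $C([0,T];X)$ with $X=C^b_{\rm unif}(\R^n)$ or, in the $L^q$-case, in a Kato--Fujita-type space that incorporates the $L^q\to L^\infty$ smoothing $\|T_q(t)u_0\|_\infty\le C t^{-n/(2q)}\|u_0\|_{L^q}$ from \eqref{Lp Estimates}. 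The polynomial terms are controlled by $\|u^2\|_X\le\|u\|_\infty\|u\|_X$ together with \eqref{L-infinity-estimate-1} and \eqref{Lp Estimates}, while the drift term is handled via the gradient-smoothing Lemmas \ref{L-infinity-bound} and \ref{L_pbound}:
\[
\Bigl\|\int_0^t T(t-s)\nabla\cdot(u(s)\nabla v(s))\,ds\Bigr\|_X\le C\int_0^t(t-s)^{-1/2}e^{-(t-s)}\|u(s)\nabla v(s)\|_X\,ds,
\]
with $\|u\nabla v\|_X\le\|u\|_X\|\nabla v\|_\infty\le C\|u\|_X^2$ from the elliptic bounds above. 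Since $(t-s)^{-1/2}$ is integrable, small $T$ delivers a unique mild solution and hence $T_{\max}(u_0)\in(0,\infty]$; extension is standard, as the local existence time depends only on $\|u(t_0)\|_X$, whence the asserted blow-up criterion.

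The remaining regularity \eqref{local-3-eq1}--\eqref{local-3-eq3} and \eqref{local-4-eq1} is obtained by a bootstrap. When $u_0\in L^q$, the fractional-power embeddings \eqref{Fractional power Imbedding-1}--\eqref{Fractional power Imbedding-3} place $u(t_0,\cdot)$ in $C^b_{\rm unif}$ for every $t_0>0$, so the $C^b_{\rm unif}$-theory applies from $t_0$ onward; rewriting the equation in advection form,
\[
u_t-\Delta u+\chi\nabla v\cdot\nabla u=u\bigl(a-bu-\chi\lambda v+\chi\mu u\bigr),
\]
with H\"older-continuous coefficients (since $v\in C^{1,b}_{\rm unif}$ via the elliptic regularity of $G_\lambda\ast$), classical parabolic Schauder estimates then supply the H\"older-in-time regularity of all second-order derivatives. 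Nonnegativity follows from this same reformulation: the zeroth-order coefficient of $u$ on the right-hand side is bounded on every $[0,T']\subset[0,T_{\max})$, so the parabolic maximum principle in $C^b_{\rm unif}$ gives $u\ge 0$ whenever $u_0\ge 0$, and then $v=\mu G_\lambda\ast u\ge 0$ since $G_\lambda\ge 0$. The main technical obstacle is precisely the drift term in the $L^q$-setting: only the assumption $q>n$ lets Morrey's embedding put $\nabla v$ in $L^\infty$, which is what closes the bound $\|u\nabla v\|_{L^q}\lesssim\|u\|_{L^q}^2$ driving the contraction.
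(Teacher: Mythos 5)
Your proposal is correct and follows the same route as the paper's cited source: the paper itself only invokes \cite[Theorems 1.1 and 1.3]{SaSh0} for this proposition, and slaving $v=\mu\,G_\lambda\ast u$ to $u$ via the Bessel potential and then running a semigroup contraction (with Kato--Fujita-type smoothing in the $L^q$ case) on the resulting nonlocal scalar equation is precisely what that reference does; the coupled-system contraction the paper outlines in detail is for the parabolic--parabolic Proposition~\ref{local-existence-prop1}, not this one. Your bootstrap to classical regularity via Schauder estimates and your maximum-principle argument for nonnegativity are likewise consistent with the cited treatment, and your observation that $q>n$ is exactly what puts $\nabla v$ in $L^\infty$ is the right diagnosis of where the restriction on $q$ enters.
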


Proposition \ref{local-existence-prop2}   follows from \cite[Theorem 1.1, Theorem 1.3]{SaSh0}.  Proposition \ref{local-existence-prop1} can be proved by the similar arguments of
\cite[Theorem 1.1, Theorem 1.3]{SaSh0}. In the following, we provide the outline of the proof of Proposition \ref{local-existence-prop1}.

\begin{proof}[Outline of the proof of Proposition \ref{local-existence-prop1}]

$\quad$
\smallskip

\noindent (1) We provide the outline of the proof of Proposition \ref{local-existence-prop1}(1) for \eqref{main-eq1} in the following five  steps. The proof of   Proposition \ref{local-existence-prop1}(1) for \eqref{main-eq2} follows the same steps.

\smallskip

\noindent {\bf Step 1.} In this step, we prove that \eqref{main-eq1} has a  unique mild solution
$(u(t,x;u_0,v_0),v(t,x;u_0,v_0))$  on $[0,T]$  satisfying \eqref{initial-eq1} for some $T>0$,
that is,
 $$[0,T]\ni t\mapsto (u(t,\cdot;u_0,v_0),v(t,\cdot;u_0,v_0))\in C_{\rm unif}^b(\R^n)\times C_{\rm unif}^{1,b}(\R^n)\quad \text{is continuous},
$$
 and
\begin{equation}
\label{mild-solution-eq1}
\begin{cases}
u(t,\cdot;u_0,v_0)=T(t)u_0-\chi \int_0^t T(t-s)\nabla \cdot (u(s,\cdot;u_0,v_0)\nabla v(s,\cdot;u_0,v_0))ds\cr
\qquad\qquad \quad\quad\,\,  +\int_0^t T(t-s) u(s,\cdot;u_0,v_0)\big(1+a-b u(s,\cdot;u_0,v_0)\big)ds,\cr
v(t,\cdot;u_0,v_0)=T(\frac{t}{\tau})v_0 + \frac{1}{\tau}\int_0^t T(\frac{t-s}{\tau}) \big(1-u(s,\cdot;u_0,v_0) \big)v(s,\cdot;u_0,v_0) ds
\end{cases}
\end{equation}
for $t\in [0,T]$.

\smallskip

To this end,  first, for given $T>0$ and $R>0$,  let
$$
\mathcal{X}_T= C([0,T],  C^b_{\rm unif}(\R^n))\times C([0,T], C_{\rm unif}^{1,b}(\R^n))
$$
be equipped with the sup-norm, and
    $$
\mathcal{S}_{R,T}=\{(u,v) \in \mathcal{X}_T\, :\,  \sup_{s\in [0,T]}\|u(s)\|_{C^b_{\rm unif}(\R^n)} \leq R,  \sup_{s\in [0,T]}\|v(s)\|_{C_{\rm unif}^{1,b}(\R^n)} \leq R\}.
$$ Then $\mathcal{S}_{R,T}$ is a closed subset of $\mathcal{X}_T$. Define a map  on $\mathcal{S}_{R,T}$ as follows
\begin{align*}
\Psi (u,v)(t)& = \begin{pmatrix}
 \Psi_1(u,v)(t)\\
\Psi_2(u,v)(t)
\end{pmatrix}\\
& = \begin{pmatrix}
 T(t)u_0 -\chi\int_0^t T(t-s)\nabla \cdot (u(s)\nabla v(s))ds + \int_0^t T(t-s)u(s)(1+a-bu(s))ds\\
T(\frac{t}{\tau})v_0 +\frac{1}{\tau} \int_0^t T(\frac{t-s}{\tau})(1-u(s))v(s) ds.
\end{pmatrix}.
\end{align*}

Next, we show that $\Psi$ is a  well defined map from $\mathcal{S}_{R,T}$ to $\mathcal{X}_T$. Moreover, for any given $0<\beta<\frac{1}{2}$ and  $(u,v)\in \mathcal{S}_{R,T}$, $(0,T]\ni t \mapsto \Psi_1(u,v)(t)\in X^\beta$ and $(0,T]\ni t\mapsto \Psi_2(u,v)(t)\in X^{2\beta}$ are locally H\"older continuous.

We then prove that for given $R>\max\{\|u_0\|_{C_{\rm unif}^b(\R^n)},\|v_0\}_{C_{\rm unif}^{1,b}(\R^n)}\}$, there is $T:=T(R)>0$ such that
 the map $\Psi$ maps $\mathcal{S}_{R,T}$  into itself  and  is a contraction map. Hence
$\Psi$ has a unique fixed point $u(\cdot,\cdot;u_0,v_0),v(\cdot,\cdot;u_0,v_0) \in \mathcal{S}_{R,T}$ and then  \eqref{main-eq1} has a   unique mild solution
$(u(t,x;u_0,v_0)$, $v(t,x;u_0,v_0))$  on $[0,T]$  satisfying \eqref{initial-eq1} for some $T>0$.
Moreover, $(0,T]\ni t\mapsto  u(t,\cdot;u_0,v_0)\in X^\beta$ and $(0,T]\ni t\mapsto v(t,\cdot;u_0,v_0)\in X^{2\beta}$ is locally H\"older continuous for any $0<\beta<\frac{1}{2}$.

\smallskip

\noindent {\bf Step 2.} In this step, by the standard extension arguments, we prove that there is
$T_{\max}=T_{\max}(u_0,v_0)\in (0,\infty]$ such that \eqref{main-eq1} has a unique mild solution  $(u(t,x;u_0,v_0),v(t,x;u_0,v_0))$ satisfying \eqref{mild-solution-eq1} for
$t\in [0,T_{\max})$ and $(0,T_{\max})\ni t\mapsto  u(t,\cdot;u_0,v_0)\in X^\beta$ and $(0,T_{\max})\ni t\mapsto v(t,\cdot;u_0,v_0)\in X^{2\beta}$ is locally H\"older continuous for any $0<\beta<\frac{1}{2}$. Moreover, if $T_{\max}<\infty$, then  \eqref{final-eq1} holds.

\smallskip

\noindent {\bf Step 3.} In  this step, we prove that the mild solution $(u(t,x;u_0,v_0), v(t,x;u_0,v_0))$ obtained in the above two steps is a classical solution of \eqref{main-eq1} satisfying \eqref{local-1-eq1}-\eqref{local-1-eq2-1-1-1}.

\smallskip

To this end, first, fix any $t_1\in (0,T_{\max}(u_0,v_0))$.  {By \eqref{Fractional power Imbedding-0}},
$$ (-t_1,T_{\max}(u_0,v_0)-t_1)\ni t\mapsto u(t+t_1,\cdot;u_0,v_0)\in C^{\nu,b}_{\rm unif}(\R^n)$$
is locally H\"older continuous for some $\nu>0$.

Next,
 consider  the initial value problem
\begin{equation}
\label{2.13}
\begin{cases}
\tau\frac{\partial}{\partial t}\tilde{v}=(\Delta-1) \tilde{v}+\big(1-u( t+t_1,x;u_0,v_0)\big)\tilde{v}, \quad x\in\R^n,\,\, 0<t<T_{\max}(u_0,v_0)-t_1\\
\tilde{v}(0,x)=v_1(x):=v(t_1,x;u_0,v_0), \quad  x\in \R^n.
\end{cases}
\end{equation}
By \cite[Theorem 11 and Theorem 16 in Chapter 1]{Friedman}, \eqref{2.13} has a unique classical solution
$\tilde v(t,x)$ on $(0,T_{\max}-t_1)$ with $\lim_{t\to 0+} \|\tilde v(t,\cdot)-v_1\|_{C_{\rm unif}^b(\R^n)}=0$.
 By a priori interior estimates
for parabolic equations (see \cite[Theorem 5]{Friedman}), we  have
that
 \begin{equation}
\label{class0}
\tilde v(\cdot,\cdot)\in C^1((0,T_{\max}-t_1), C_{\rm unif}^{1,b}(\R^n)),
\end{equation}
 and  the mappings
\begin{equation}\label{class1}
    t\mapsto \tilde v(t,\cdot)\in C_{\rm unif}^{\nu,b}(\R^n),\quad
t\mapsto  \frac{\partial\tilde  v}{\partial x_i}(t,\cdot)\in C_{\rm unif}^{\nu,b}(\R^n),
\end{equation}
\begin{equation}\label{class2}
    t\mapsto  \frac{\partial^2 \tilde v}{\partial x_i\partial x_j}(t,\cdot)\in C_{\rm unif}^{\nu,b}(\R^n),\quad
 t\mapsto  \frac{\partial\tilde  v}{\partial t}(t,\cdot)\in C_{\rm unif}^{\nu,b}(\R^n)
\end{equation}
are locally H\"older continuous in $t\in (0,T_{\max}-t_1)$ for  $i,j=1,2,\cdots,N$ and $0<\nu\ll 1$. By \cite[Lemma 3.3.2]{Dan Henry},
 $\tilde v=\tilde v(t,\cdot)$ is also a mild solution of \eqref{2.13} and then satisfies the following integral equation,
\begin{eqnarray*}\label{utilde_Eq}
\tilde{v}(t,\cdot) = T\Big(\frac{t}{\tau}\Big)v_1 + {\frac{1}{\tau}}\int_0^t T\Big(\frac{t-s}{\tau}\Big)\big(1-u(s+t_1,\cdot;u_0,v_0)\big)\tilde v(s,\cdot) ds\quad \forall\, t\in [0,T_{\max}-t_1).
\end{eqnarray*}
Note that for any $t\in [0,T_{\max}-t_1)$,
\begin{eqnarray*}
    v(t+t_1,\cdot;u_0,v_0)
   = T\Big(\frac{t}{\tau}\Big)v_1 + {\frac{1}{\tau}}\int_0^t T\Big(\frac{t-s}{\tau}\Big)\big(1-u(s+t_1,\cdot;u_0,v_0)\big)v(s+t_1,\cdot;u_0,v_0)ds.
\end{eqnarray*}
It is  not difficult to prove that  $ v(t+t_1,\cdot;u_0,v_0)=\tilde v(t,\cdot)  $ for every $t\in [0,T_{\max}-t_1)$.
Hence,  by \eqref{class1} and \eqref{class2}, the mappings
$$ t\to \nabla v(\cdot,t+t_1,\cdot;u_0,v_0)\in C^\nu_{\rm unif}(\R^n), t\to \Delta v(\cdot,t+t_1,\cdot;u_0,v_0)\in C^\nu_{\rm unif}(\R^n)$$
are locally H\"older continuous in $t\in (-t_1,T_{\max}-t_1)$ for $0<\nu\ll 1$.

 Now, consider  the initial value problem
\begin{equation}
\label{2.133}
\begin{cases}
\frac{\partial}{\partial t}\tilde{u}=(\Delta-1) \tilde{u}+F(t, x,\tilde{u},\nabla \tilde u), \quad x\in\R^n,\,\, 0<t<T_{\max}-t_1\\
\tilde{u}(0,x)=u_{1}(x):=u(t_1,x;u_0,v_0), \quad  x\in \R^n,
\end{cases}
\end{equation}
where
\begin{align*}
F(t,x,\tilde u,\nabla \tilde u)&=-\chi\nabla v(t+t_1,x;u_0,v_0)\cdot \nabla \tilde{u}\\
&\quad +\Big( - \chi\Delta v( t+t_1,x;u_0,v_0)+1+a-bu(t+t_1,x;u_0,v_0)\Big)\tilde{u}.
\end{align*}
By the similar arguments as in the above, \eqref{2.133}  has a unique classical solution
$\tilde u(t,x)$ on $(0,T_{\max}-t_1)$ with $\lim_{t\to 0+} \|\tilde u(t,\cdot)-u_1\|_{C_{\rm unif}^b(\R^n)}=0$, and  $\tilde u(t,\cdot)$ satisfies \eqref{class0}-\eqref{class2} with $\tilde v$ being replaced by $\tilde u$.  Moreover, $u(t,\cdot;u_0,v_0)=\tilde u(t,\cdot)$
for $t\in [0,T_{\max}-t_1)$.

Therefore,  $(u(t,x;u_0,v_0),v(t,x;u_0,v_0))$ is a classical solution of \eqref{main-eq1}   satisfying   \eqref{local-1-eq1}-\eqref{local-1-eq2-1-1-1}.

\smallskip

\noindent {\bf Step 4.}  In this step,   by the uniqueness of mild solutions of \eqref{main-eq1}, we prove that \eqref{main-eq1} has a unique  classical solution  satisfying   \eqref{local-1-eq1}-\eqref{local-1-eq2-1-1-1}.

\smallskip

\noindent{\bf Step 5.} In this step, by the comparison principle for parabolic equations, we proved that
$$
u(t,x;u_0,v_0)\ge 0\quad {\rm and}\quad v(t,x;u_0,v_0)\ge 0\quad \forall\, t\in [0,T_{\max}(u_0,v_0),\,\, x\in\R^n
$$
provided that $u_0(x),v_0(x)\ge 0$ for $x\in\R^n$. Proposition \eqref{local-existence-prop1}(1) for \eqref{main-eq1} is thus proved.

\medskip

\noindent (2)   We provide the outline of the proof of Proposition \ref{local-existence-prop1}(2) for \eqref{main-eq1} in the following five  steps. The proof of   Proposition \ref{local-existence-prop1}(2) for \eqref{main-eq2} follows the same steps.

\smallskip

\noindent {\bf Step 1$^{'}$.}  In this step, we prove that \eqref{main-eq1} has a  unique mild solution
$(u(t,x;u_0,v_0),v(t,x;u_0,v_0))$  on $[0,T]$  satisfying \eqref{initial-eq2} for some $T>0$, that is,
$$[0,T]\ni t\mapsto (u(t,\cdot;u_0,v_0),v(t,\cdot;u_0,v_0))\in L^q(\R^n)\times W^{1,q}(\R^n)\quad \text{is continuous},
$$
and
\begin{equation}
\label{mild-solution-eq2}
\begin{cases}
u(t,\cdot;u_0,v_0)=T_q(t)u_0-\chi \int_0^t T_q(t-s)\nabla \cdot (u(s,\cdot;u_0,v_0)\nabla v(s,\cdot;u_0,v_0))ds\cr
\qquad\qquad \quad\quad\,\,  +\int_0^t T_q(t-s) u(s,\cdot;u_0,v_0)\big(1+a-b u(s,\cdot;u_0,v_0)\big)ds,\cr
v(t,\cdot;u_0,v_0)=T_q(\frac{t}{\tau})v_0 + \frac{1}{\tau}\int_0^t T_q(\frac{t-s}{\tau})\big(1-u(s,\cdot;u_0,v_0) \big)v(s,\cdot;u_0,v_0) ds
\end{cases}
\end{equation}
for $t\in [0,T]$.

To this end, as in Step 1 of the outline of the proof of Proposition \ref{local-existence-prop1}(1),  first,  for given $T>0$ and $R>0$,
 let
$$\mathcal{X}_{q,T}= C([0,T]; L^q(\R^n))\times C([0,T]; W^{1,q}(\R^n))
$$
 be equipped with the sup-norm, and
 $$
\mathcal{S}_{q,R,T}=\{(u,v) \in X: \sup_{s\in [0,T]}\|u(s)\|_{L^q(\R^n)} \leq R,  \sup_{s\in [0,T]}\|v(s)\|_{W^{1,q}(\R^n)} \leq R\}.$$
 Define  $\Psi_q$  on $\mathcal{S}_{q, R,T}$ by
\begin{align*}
\Psi _q(u,v)(t)& = \begin{pmatrix}
 \Psi_{1,q}(u,v)(t)\\
\Psi_{2,q}(u,v)(t)
\end{pmatrix}\\
& = \begin{pmatrix}
 T_q(t)u_0 -\chi\int_0^t T_q(t-s)\nabla \cdot (u(s)\nabla v(s))ds + \int_0^t T_q(t-s)u(s)(1+a-bu(s))ds\\
T_q(\frac{t}{\tau})v_0 +\frac{1}{\tau} \int_0^t T_q(\frac{t-s}{\tau})(1-u(s))v(s) ds.
\end{pmatrix}.
\end{align*}

Next, we show that the map $\Psi_q$  is a  well defined map from $\mathcal{S}_{q,R,T}$ to $\mathcal{X}_{q,T}$. Moreover, for any given $0<\beta<\frac{1}{2}-\frac{n}{2q}$, $0<\gamma<\frac{1}{2}$,  and  $(u,v)\in \mathcal{S}_{R,T}$, $(0,T]\ni t \mapsto \Psi_1(u,v)(t)\in X^\beta$ and $(0,T]\ni t\mapsto \Psi_2(u,v)(t)\in X^{2\gamma}$ are locally H\"older continuous.

We then prove that for given $R>\max\{\|u_0\|_{L^q(\R^n)},\|v_0\|_{W^{1,q}(\R^n)}\}$, there is $T:=T(R)>0$ such that
 the map $\Psi_q$ maps $\mathcal{S}_{q,R,T}$  into itself  and  is a contraction map. Hence $\Psi_q$ has a unique fixed point $(u(\cdot,\cdot;u_0,v_0),v(\cdot,\cdot;u_0,v_0))$ in $\mathcal{S}_{q,R,T}$ and then \eqref{main-eq1} has a   unique mild solution
$(u(t,x;u_0,v_0),v(t,x;u_0,v_0))$  on $[0,T]$  satisfying \eqref{initial-eq2} for some $T>0$.
Moreover, $(0,T]\ni t\mapsto  u(t,\cdot;u_0,v_0)\in X_q^\beta$ and $(0,T]\ni t\mapsto v(t,\cdot;u_0,v_0)\in X_q^{2\gamma}$ is locally H\"older continuous for any $0<\beta<\frac{1}{2}-\frac{n}{2q}$ and $0<\gamma<\frac{1}{2}$.

\smallskip

\noindent {\bf Step 2$^{'}$.} In this step, by the standard extension arguments, we prove that there is
$T_{\max}=T_{\max}(u_0,v_0)\in (0,\infty]$ such that \eqref{main-eq1} has a unique mild solution  $(u(t,x;u_0,v_0),v(t,x;u_0,v_0))$ satisfying \eqref{mild-solution-eq2} for
$t\in [0,T_{\max})$ and $(0,T_{\max})\ni t\mapsto  u(t,\cdot;u_0,v_0)\in X_q^\beta$ and $(0,T_{\max})\ni t\mapsto v(t,\cdot;u_0,v_0)\in X_q^{2\gamma}$ is locally H\"older continuous for any $0<\beta<\frac{1}{2}-\frac{n}{2q}$ and $0<\gamma<\frac{1}{2}$. Moreover, if $T_{\max}<\infty$, then  \eqref{final-eq2} holds.

\smallskip

\noindent {\bf Step 3$^{'}$.}  In this step, using the fact that $(0,T_{\max}(u_0,v_0))\ni t\mapsto v(t,\cdot;u_0,v_0)\in X_q^\gamma$ is locally H\"older continuous for any $0<\gamma<1$, we prove that
$(0,T_{\max}(u_0,v_0))\ni t \mapsto u(t,\cdot;u_0,v_0)\in X_q^\beta$ is locally H\"older continuous  for any $0<\beta<\frac{1}{2}$. Then by \eqref{Fractional power Imbedding-1},
$u(t,\cdot;u_0,v_0)\in C_{\rm unif}^b(\R^n)$ and $v(t,\cdot;u_0,v_0)\in C_{\rm unif}^{1,b}(\R^n)$ for all $t\in (0,T_{\max}(u_0,v_0))$.

{To be a little more specific,  by \eqref{Fractional power Imbedding-1}, for any
$0<t_1<t_2<T_{\max}(u_0,v_0)$, $[t_1,t_2]\ni t\mapsto \nabla v(t,\cdot;u_0,v_0)\in C_{\rm unif}^b(\R^n)$ is H\"older continuous. Then for any $0<\beta<\frac{1}{2}$ and $0<\delta<1$, there is $C>0$ such that for any  $t\in [t_1,t_2]$, we have
\begin{align*}
&\|A^\beta u(t,\cdot;u_0,v_0)\|_{L^q}\le \|A^\beta T_q(t-t_1) u(t_1,\cdot;u_0,v_0)\|_{L^q}\\
&\qquad\quad +\chi \int_{t_1}^ t \|A^\beta T_q(t-s)\nabla \cdot(u(s,\cdot;u_0,v_0)\nabla v(s,\cdot;u_0,v_0))\|_{L^q}ds\\
&\qquad\quad +\int_{t_1}^t \|A^\beta T_q(t-s) u(s)\big(1+a-b u(s)\big)\|_{L^q}ds\\
&\qquad\le   \|A^\beta T_q(t-t_1) u(t_1,\cdot;u_0,v_0)\|_{L^q}\\
&\qquad\quad +C \chi \max_{t_1\le t\le t_2}\|\nabla v(t,\cdot;u_0,v_0)\|_\infty \int_{t_1}^t
(t-s)^{-\beta-\frac{1}{2}}e^{-(1-\delta)(t-s)}\|u(s,\cdot;u_0,v_0)\|_{L^q} ds\\
&\qquad\quad +C\int_{t^1}^t e^{-(1-\delta)(t-s)}\Big((t-s)^{-\beta}\|u(s)\|_{L^q}
+(t-s)^{-\beta -\frac{n}{2q}}\|u^2(s)\|_{L^\frac{q}{2}}\Big)ds.
\end{align*}
This implies that for any $t\in (t_1,t_2)$, $u(t,\cdots;u_0,v_0)\in X_q^\beta$, and then for any $t\in (0,T_{\max}(u_0,v_0)$, $u(t,\cdots;u_0,v_0)\in X_q^\beta$. Moreover, we can prove that $(0,T_{\max}(u_0,v_0))\ni t\mapsto u(t,\cdot;u_0,v_0)\in X_q^\beta$ is locally H\"older continuous.
}

\smallskip

\noindent {\bf Step 4$^{'}$.} In this step, we prove that \eqref{main-eq1} has a unique classical solution satisfying   \eqref{initial-eq2}-\eqref{local-2-eq1-1} and \eqref{local-1-eq2}-\eqref{local-1-eq2-1-1-1}.

In fact,  by Step 3$^{'}$, for any $t_1\in (0, T_{\max}(u_0,v_0))$,    $u(t_1,\cdot;u_0,v_0)\in C_{\rm unif}^b(\R^n)$ and $v(t_1,\cdot;u_0,v_0)\in C_{\rm unif}^{1,b}(\R^n)$.  Then by Proposition \ref{local-existence-prop1}(1),
the mild solution $(u(t,x;u_0,v_0),v(t,x;u_0,v_0))$ obtained in the above is a classical solution of \eqref{main-eq1} satisfying  \eqref{local-2-eq1}-\eqref{local-2-eq1-1} and \eqref{local-1-eq2}-\eqref{local-1-eq2-1-1-1}.
Note that a classical solution of \eqref{main-eq1}  satisfying   \eqref{initial-eq2}-\eqref{local-2-eq1-1} and \eqref{local-1-eq2}-\eqref{local-1-eq2-1-1-1} is also a mild solution of \eqref{main-eq1} satisfying \eqref{initial-eq2}. Then, by the uniqueness of mild solution of \eqref{main-eq1},  \eqref{main-eq1} has a unique classical solution satisfying   \eqref{initial-eq2}-\eqref{local-2-eq1-1} and \eqref{local-1-eq2}-\eqref{local-1-eq2-1-1-1}.

\smallskip

\noindent {\bf Step 5$^{'}$.} In this step, by the comparison principle for parabolic equations, we prove that
$$
u(t,x;u_0,v_0)\ge 0\quad {\rm and}\quad v(t,x;u_0,v_0)\ge 0\quad \forall\, t\in [0,T_{\max}(u_0,v_0)),\,\, x\in\R^n
$$
provided $u_0(x)\ge 0$ and $v_0(x)\ge 0$ for all $x\in\R^n$. Proposition \ref{local-existence-prop1}(2) is thus proved.
\end{proof}

\subsection{Maximal regularity for parabolic initial-boundary
value problems}

In this subsection, we present a lemma on the maximal regularity for parabolic equations on
$\R^n$ to be used in the proofs of Theorems \ref{main-thm1} and \ref{main-thm2}.

\begin{lem}
\label{maximal-regularity-lm}
    Let $\tau>0, \alpha>0$,  and  $\gamma \in(1, \infty) $ and $v_0\in L^\gamma(\R^n)\cap L^\infty(\R^n)$.  There exists {$C_{\gamma,n}$ {independent of $\tau$} and $\alpha$} such that for any  $T\in (0,\infty)$, if  $g \in L^\gamma((0,T), L^\gamma(\R^n))$ and $v(\cdot,\cdot)\in W^{1,\gamma}((0,T),L^{\gamma}(\R^n))\cap L^{\gamma}((0,T), W^{2,\gamma}(\R^n))$ solves  the following initial boundary value problem,
\begin{equation}
\label{pdelaplace}
\begin{cases}
\tau v_t =\Delta v -\alpha  v + g,\quad x\in \R^n,\,\,  0<t<T\cr
v(0,x) = v_0(x),
\end{cases}
\end{equation}
then for any $t_0\in (0,T)$,
\begin{equation}
    \begin{aligned}
\label{maximal-regularity-eq1}
&\int_{t_0}^T \int_{\R^n}e^{\frac{\alpha\gamma t}{\tau}}\Big( |v (t,x)|^\gamma +|\nabla v(t,x)|^\gamma +|\Delta v(t,x)|^{\gamma}\Big)dxdt\\
&\qquad\qquad \le C_{\gamma,n} \int_{t_0}^T \int_{\R^n}e^{\frac{\alpha\gamma t}{\tau}}|g(t,x)|^{\gamma}dx dt+C_{\gamma,n}(T+\tau^\gamma t_0^{1-\gamma})\|v_0(\cdot)\|^{\gamma}_{L^\gamma(\R^n)} .
\end{aligned}
\end{equation}

\end{lem}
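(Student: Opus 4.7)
I would reduce the problem to the classical maximal $L^\gamma$ regularity for the heat equation on $\R^n$ via two simplifying changes of variable. First, set $w(t,x):=e^{\alpha t/\tau}v(t,x)$. A direct computation gives
\[
\tau w_t=\Delta w+\tilde g,\qquad w(0,\cdot)=v_0,\qquad \tilde g(t,x):=e^{\alpha t/\tau}g(t,x),
\]
and since $|w|^\gamma=e^{\alpha\gamma t/\tau}|v|^\gamma$ (and analogously for $\nabla w$, $\Delta w$, $\tilde g$), the exponentially weighted inequality in the statement is equivalent to the unweighted inequality for $w$. Second, rescale time via $s=t/\tau$ and put $W(s,x):=w(\tau s,x)$ to obtain the unit-speed heat equation
\[
W_s=\Delta W+H,\qquad W(0,\cdot)=v_0,\qquad H(s,x):=\tilde g(\tau s,x),
\]
on $\R^n\times(0,S)$ with $S:=T/\tau$ and $s_0:=t_0/\tau$. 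Since $t_0=\tau s_0$ and the Jacobian introduces a factor $\tau$ that cancels between the two sides, the target reduces to
\[
\int_{s_0}^{S}\!\int_{\R^n}\bigl(|W|^\gamma+|\nabla W|^\gamma+|\Delta W|^\gamma\bigr)\,dx\,ds \le C_{\gamma,n}\|H\|_{L^\gamma((0,S);L^\gamma)}^{\gamma}+C_{\gamma,n}\bigl(S+s_0^{1-\gamma}\bigr)\|v_0\|_{L^\gamma}^{\gamma}.
\]
As $\tau$ and $\alpha$ no longer appear, any constant emerging from this estimate is automatically independent of them.

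To prove the reduced inequality, I would split $W=W_I+W_F$, where $W_I(s)=e^{s\Delta}v_0$ carries the initial data and $W_F(s)=\int_0^{s}e^{(s-r)\Delta}H(r)\,dr$ carries the forcing. For $W_F$ I would invoke the classical maximal $L^\gamma$ regularity of the heat semigroup on $\R^n$ (valid for $\gamma\in(1,\infty)$ since $L^\gamma(\R^n)$ is a UMD space), yielding the $S$-independent bound
\[
\|\partial_s W_F\|_{L^\gamma((0,S);L^\gamma)}+\|\Delta W_F\|_{L^\gamma((0,S);L^\gamma)}\le C_{\gamma,n}\|H\|_{L^\gamma((0,S);L^\gamma)}.
\]
For $W_I$ I would use the standard contraction estimate $\|e^{s\Delta}v_0\|_{L^\gamma}\le\|v_0\|_{L^\gamma}$ together with the smoothing estimate $\|\Delta e^{s\Delta}v_0\|_{L^\gamma}\le Cs^{-1}\|v_0\|_{L^\gamma}$; integrating in $s$ over $(s_0,S)$ produces respectively the contributions $S\|v_0\|_{L^\gamma}^{\gamma}$ and $\tfrac{C}{\gamma-1}s_0^{1-\gamma}\|v_0\|_{L^\gamma}^{\gamma}$, whose sum is precisely the advertised $S+s_0^{1-\gamma}$ combination. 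The $|\nabla W|^\gamma$ term would then be absorbed using the Gagliardo--Nirenberg-type inequality
\[
\|\nabla f\|_{L^\gamma(\R^n)}^{\gamma}\le C_{\gamma,n}\bigl(\|f\|_{L^\gamma(\R^n)}^{\gamma}+\|\Delta f\|_{L^\gamma(\R^n)}^{\gamma}\bigr),
\]
a consequence of the $L^\gamma$-boundedness of the Riesz transforms on $\R^n$, applied pointwise in $s$ and integrated.

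Undoing the rescaling via $t=\tau s$ converts $S\|v_0\|^\gamma$ back into $T\|v_0\|^\gamma$ and $s_0^{1-\gamma}\|v_0\|^\gamma$ into $\tau^\gamma t_0^{1-\gamma}\|v_0\|^\gamma$, yielding the right-hand side of the statement. The main conceptual obstacle is the maximal $L^\gamma$ regularity itself; it is a non-elementary but classical parabolic estimate, and it is the sole source of the dimensional constant $C_{\gamma,n}$ whose detailed $n$-dependence is investigated in the appendix and which controls the sharp smallness hypotheses in Theorems \ref{main-thm1} and \ref{main-thm2}. A subsidiary technical point is to verify that the $s^{-1}$ singularity of $\|\Delta e^{s\Delta}v_0\|_{L^\gamma}$ at $s=0$ is precisely absorbed by the truncation at $s_0>0$, thereby producing the $s_0^{1-\gamma}$ power and allowing $v_0$ to belong merely to $L^\gamma\cap L^\infty$ rather than to a finer interpolation trace space.
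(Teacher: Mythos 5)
Your proposal is correct and takes essentially the same approach as the paper: both remove the zeroth-order term by the substitution $w=e^{\alpha t/\tau}v$, both invoke the classical maximal $L^\gamma$-regularity of the heat semigroup on $\R^n$ for the zero-initial-data part, both split off the free heat semigroup $e^{s\Delta}v_0$ and bound it via kernel/semigroup smoothing estimates integrated over $(t_0,T)$ to produce the $T+\tau^\gamma t_0^{1-\gamma}$ factor, and both use a time-rescaling argument to show the constant is independent of $\tau$ (the paper performs this rescaling at the end; you perform it upfront). The only cosmetic differences are that the paper writes out explicit pointwise bounds on $\nabla G$ and $G_t$ with Young's convolution inequality where you cite the abstract smoothing estimate $\|\Delta e^{s\Delta}v_0\|_{L^\gamma}\le Cs^{-1}\|v_0\|_{L^\gamma}$, and the paper folds the $|\nabla v|^\gamma$ control into the maximal-regularity estimate directly while you recover it via Riesz transforms.
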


\begin{proof}
First assume that  $v_0 = 0$. Let
$$
\tilde g(t,x)=
\begin{cases}
g(t,x)\quad &{\rm for}\quad t\in (0,T),\,\, x\in\R^n\cr\cr
0\quad &{\rm for}\quad t>T,\,\, x\in\R^n.
\end{cases}
$$
 By \cite[Theorem 3.1]{HiPr},  the initial value problem
\begin{equation*}
\begin{cases}
\tau \tilde v_t =\Delta \tilde v-\alpha\tilde v  + \tilde g,\quad x\in \R^n,\,\,  0<t<\infty \cr
\tilde v(0,x) = 0
\end{cases}
\end{equation*}
has a unique solution
 $\tilde v(\cdot,\cdot)\in W^{1,\gamma}((0,\infty),L^{\gamma}(\R^n))\cap L^{\gamma}((0,\infty), W^{2,\gamma}(\R^n))$.
This implies that
the initial value problem
\eqref{pdelaplace} has a unique solution  in $W^{1,\gamma}((0,T),L^{\gamma}(\R^n))\cap L^{\gamma}((0,T), W^{2,\gamma}(\R^n))$.

Let
\[
w(t,x):=e^{{ \frac{{ \alpha t}}{\tau}}}\tilde v(t,x).
\]
Then $w(t,x)$ solves
\begin{equation}\label{pdelaplace-1}
\begin{cases}
\tau w_t =\Delta  w + e^{{ \frac{{ \alpha t}}{\tau}} }\tilde g,\quad x\in \R^n,\,\,  0<t<\infty \cr
 w(0,x) = 0
\end{cases}
\end{equation}
By the above arguments,  \eqref{pdelaplace-1}  has a unique solution in $W^{1,\gamma}((0,\infty),L^{\gamma}(\R^n))$ $\cap$ $L^{\gamma}((0,\infty), W^{2,\gamma}(\R^n))$. By the closed graph theorem, there is ${ C_{\gamma,\tau}'}>0$ independent of $T,\alpha$ such that
\begin{equation}
\label{est1}
  \int_0^T \int_{\R^n}e^{{ \frac{\alpha\gamma t}{\tau}}} \Big( |v(t,x)|^\gamma+|  \nabla v(t,x)|^\gamma+|\Delta v(t,x)|^\gamma\Big)dxdt\le C_{\gamma,\tau}' \int_0^T \int_{\R^n} e^{{ \frac{\alpha\gamma t}{\tau}}}| g(t,x)|^\gamma dxdt.
\end{equation}

Now, assume $v_0\not =0$.  Let $\bar{v}$ solve \eqref{pdelaplace} with $v_0$ replaced by $0$, and then $\bar{v}$ satisfies \eqref{est1}. Note that $w:=v-\bar{v}$ satisfies
\begin{equation}
\label{pdelaplace-2}
\begin{cases}
\tau w_t =\Delta w -\alpha  w,\quad t
\in (0,T),\, x\in \R^n\cr
w(0,x) = v_0.
\end{cases}
\end{equation}
By classical results of heat equations, $w$ is explicit:
\[
w(t,x)= e^{-{ \frac{\alpha t}{\tau}}}(G * v_0)(t/\tau,x)
\]
where $G$ is the heat kernel \eqref{heat-kernel}, and we know for some absolute constant $C>0$
\[
|\nabla G(t,x)|\leq \frac{|x|}{2t} G(t,x)\leq \frac{C}{\sqrt{t}}G(2t,x),
\]
and
\[
|G_t(t,x)|\leq \left(\frac{n}{{2 t}}+\frac{|x|^2}{4t^2}\right)G(t,x)\leq \frac{Cn}{t}G(t,x)+\frac{C}{t} {  G(2t,x)}.
\]
Then, by Young's convolution inequality, it is not hard to see that for all $t>0$,
\beq\label{est2}
\begin{aligned}
&\|w(t,\cdot)\|^{\gamma}_{L^\gamma(\R^n)} +\|\nabla w(t,\cdot)\|^{\gamma}_{L^\gamma(\R^n)}+ \|\Delta w(t,\cdot)\|^{\gamma}_{L^\gamma(\R^n)}\\
&\qquad\qquad\leq C^\gamma e^{-{ \frac{\alpha \gamma t}{\tau}}} \left[\Big\|(1+\frac{\tau}{t})G\big(\frac{2t}\tau,\cdot\big)\Big\|_{L^1(\R^n)}^\gamma+\Big\|(1+\frac{n\tau}{t})G\big(\frac{t}\tau,\cdot\big)\Big\|_{L^1(\R^n)}^\gamma \right]
\|v_0(\cdot)\|^{\gamma}_{L^\gamma(\R^n)}\\
&\qquad\qquad\leq C^\gamma e^{- \frac{\alpha \gamma t}{\tau}}\left[1+ \big(\frac{n\tau}{t}\big)^{\gamma}\right]\|v_0(\cdot)\|^{\gamma}_{L^\gamma(\R^n)}.
\end{aligned}
\eeq
Multiplying \eqref{est2} with $e^{{ \frac{\alpha\gamma t}{\tau}}}$ and integrating over $t\in (t_0,T)$ yield for some absolute $C>0$,
\begin{align*}
\int_{t_0}^T \int_{\R^n}e^{\frac{\alpha\gamma t}{\tau}}\Big( |w |^\gamma +|\nabla w|^\gamma +|\Delta w|^{\gamma}\Big)dxdt \le C^\gamma n^\gamma\Big(T+\tau^\gamma t_0^{1-\gamma}\Big)\|v_0(\cdot)\|^{\gamma}_{L^\gamma(\R^n)} .
\end{align*}
The above and \eqref{est1} yield the existence of $C_{\gamma,\tau,n}>0$ such that
 \begin{equation}
    \begin{aligned}
\label{maximal-regularity-eq1-1}
&\int_{t_0}^T \int_{\R^n}e^{\frac{\alpha\gamma t}{\tau}}\Big( |v (t,x)|^\gamma +|\nabla v(t,x)|^\gamma +|\Delta v(t,x)|^{\gamma}\Big)dxdt\\
&\qquad\qquad \le C_{\gamma,\tau,n} \int_{t_0}^T \int_{\R^n}e^{\frac{\alpha\gamma t}{\tau}}|g(t,x)|^{\gamma}dx dt+C_{\gamma,\tau,n}(T+\tau^\gamma t_0^{1-\gamma})\|v_0(\cdot)\|^{\gamma}_{L^\gamma(\R^n)} .
\end{aligned}
\end{equation}

In the following, we always assume that $C_{\gamma,\tau,n}$ is the
smallest positive constant such that  the inequality \eqref{maximal-regularity-eq1-1} holds.
We claim that $C_{\gamma,\tau,n}=C_{\gamma,1,n}$ for all $\tau>0$. Indeed, by a scaling argument, one can consider $\tilde{v}(t,x):=v(\tau t,x)$ which solves
\[
\tilde{v}_t=\Delta\tilde{v}-\alpha \tilde v+g(\tau t,x),\quad \tilde{v}(0,x)=v_0.
\]
Then applying \eqref{maximal-regularity-eq1-1} (with $\tau=1$, and $t_0,T$ replaced by ${\frac{t_0}{\tau}, \frac{T}{\tau}}$ respectively) to $\tilde{v}$ yields
\begin{align*}
&\int_{{\frac{t_0}{ \tau}}}^{{ \frac{T}{\tau}}} \int_{\R^n}e^{{\alpha\gamma \tilde t}}\Big( |\tilde v (\tilde t,x)|^\gamma +|\nabla \tilde v(\tilde t,x)|^\gamma +|\Delta \tilde v(\tilde t,x)|^{\gamma}\Big)dxd\tilde t\\
&\qquad \le C_{\gamma,1,n} \int_{{\frac{t_0}{ \tau }}}^{{\frac{T}{\tau}}}\int_{\R^n}e^{{\alpha\gamma t}}|g(\tau\tilde  t,x)|^{\gamma}dx d\tilde t+C_{\gamma,1,n}\Big({\frac{T}{\tau }}+\big({\frac{t_0}{\tau }}\big)^{1-\gamma}\Big)\|v_0(\cdot)\|^{\gamma}_{L^\gamma(\R^n)}.
\end{align*}
{This together with the  variable change $\tilde t=\frac{t}{\tau}$ implies that
\begin{align*}
&\frac{1}{\tau}\int_{{t_0}}^{{ T}} \int_{\R^n}e^{{\frac{\alpha\gamma t}{\tau}}}\Big( | v (t,x)|^\gamma +|\nabla  v(t,x)|^\gamma +|\Delta  v(t,x)|^{\gamma}\Big)dxdt\\
&\qquad \le\frac{1}{\tau} C_{\gamma,1,n} \int_{{t_0}}^{{ T}}\int_{\R^n}e^{{\frac{\alpha\gamma t}{\tau}}}|g( t,x)|^{\gamma}dx dt+C_{\gamma,1,n}\Big({\frac{T}{\tau }}+\big({\frac{t_0}{\tau }}\big)^{1-\gamma}\Big)\|v_0(\cdot)\|^{\gamma}_{L^\gamma(\R^n)}.
\end{align*}}
This implies \eqref{maximal-regularity-eq1} with $C_{\gamma,n}=C_{\gamma,1,n}$.
\end{proof}

\subsection{A useful  exponentially decaying function}

In this subsection, we present a lemma on the existence of some special exponentially decaying functions to be used in later sections.

\begin{lem}
\label{psi-lm}
For any {$\kappa_1>0$, there are $\k_2>0$} and
 $\psi\in C^\infty(\R^n)$  such that
\begin{equation}\label{psi-eq00}
    0<\psi(x)\le e^{-\k_2|x|},\quad |\nabla \psi(x)|\le \k_1 \psi(x),\quad |\Delta \psi(x)|\le \k_1\psi \quad  \forall\, x\in\R^n.
\end{equation}
\end{lem}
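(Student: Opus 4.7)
\smallskip

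My plan is to exhibit $\psi$ explicitly as the composition of the exponential with a smooth regularization of $|x|$. Specifically, I would try
\[
\psi(x) := \exp\bigl(-\kappa_2 \varphi(x)\bigr),\qquad \varphi(x):=\sqrt{1+|x|^2},
\]
where $\kappa_2\in(0,\kappa_1]$ is a small constant to be chosen depending on $\kappa_1$ and $n$. Since $\varphi\ge|x|$ pointwise, the pointwise bound $0<\psi(x)\le e^{-\kappa_2|x|}$ is immediate, and $\psi$ is clearly smooth because $\varphi$ is smooth on $\R^n$ (it is bounded below by $1$).

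Next I would compute derivatives of $\varphi$: one finds $\nabla\varphi(x)=x/\varphi(x)$, hence $|\nabla\varphi|\le 1$, and
\[
\Delta\varphi=\frac{n}{\varphi}-\frac{|x|^2}{\varphi^3}=\frac{n-1}{\varphi}+\frac{1}{\varphi^3},
\]
so $0<\Delta\varphi\le n$ uniformly on $\R^n$. From $\nabla\psi=-\kappa_2(\nabla\varphi)\psi$ I would obtain $|\nabla\psi|\le\kappa_2\psi$, and from $\Delta\psi=\bigl(\kappa_2^2|\nabla\varphi|^2-\kappa_2\Delta\varphi\bigr)\psi$ I would obtain $|\Delta\psi|\le(\kappa_2^2+n\kappa_2)\psi$.

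Finally, I would pick $\kappa_2>0$ small enough that both $\kappa_2\le\kappa_1$ and $\kappa_2^2+n\kappa_2\le\kappa_1$ hold; for instance $\kappa_2:=\min\{\kappa_1,\kappa_1/(n+1),1\}$ (or a similar explicit choice) works. With this choice the three estimates in \eqref{psi-eq00} all follow at once, completing the proof.

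There is essentially no technical obstacle here: the argument is a direct computation once one recognizes that $\sqrt{1+|x|^2}$ is the natural smooth substitute for $|x|$ (which is only Lipschitz, not $C^\infty$, at the origin). The only mild point to be careful about is that the required smallness condition on $\kappa_2$ depends on the dimension $n$ through the Laplacian estimate, but since $n$ is fixed throughout the paper, this causes no difficulty.
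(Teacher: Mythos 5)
Your proof is correct, but the explicit function you build is genuinely different from the one in the paper. The paper takes a separable product of one-dimensional sech-type factors,
\[
\psi(x)=\prod_{i=1}^{n}\frac{1}{e^{\kappa_2 x_i}+e^{-\kappa_2 x_i}},
\]
computes $\psi_{x_i}$ and $\psi_{x_ix_i}$ coordinate-by-coordinate, and obtains $|\nabla\psi|\le \kappa_2\sqrt{n}\,\psi$ and $|\Delta\psi|\le 2\kappa_2^2 n\,\psi$, leading to the choice $\kappa_2\le\min\{\kappa_1/\sqrt n,\,\sqrt{\kappa_1/(2n)}\}$. You instead take the exponential of the smoothed radial weight $\varphi(x)=\sqrt{1+|x|^2}$ (the ``Japanese bracket'' $\langle x\rangle$), so $\psi=e^{-\kappa_2\varphi}$, and use $|\nabla\varphi|\le 1$, $0<\Delta\varphi\le n$ to get $|\nabla\psi|\le\kappa_2\psi$ and $|\Delta\psi|\le(\kappa_2^2+n\kappa_2)\psi$. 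Both routes are short and elementary; yours is arguably more standard (the weight $\langle x\rangle$ is ubiquitous in harmonic analysis and PDE), while the paper's tensor-product form has the minor virtue of making all derivative calculations one-dimensional. The dimension dependence of the admissible $\kappa_2$ is slightly different in the two constructions (yours needs $\kappa_2\lesssim\kappa_1/n$, the paper's needs $\kappa_2\lesssim\min\{\kappa_1/\sqrt n,\sqrt{\kappa_1/n}\}$), but since $n$ is fixed and only the existence of \emph{some} $\kappa_2>0$ is asserted, this makes no difference to the lemma or to how it is used later. One small stylistic note: the statement only requires that some $\kappa_2>0$ exists, so you could simplify your final choice to $\kappa_2=\min\{\kappa_1/(n+1),1\}$ (the redundant $\kappa_1$ in the minimum does no harm, but is unnecessary since $\kappa_1/(n+1)\le\kappa_1$).
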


\begin{proof}
First, for given $\k_1>0$, choose $\k_2>0$ such that
$$
\k_2\le \min\{\k_1/\sqrt n,\, \sqrt{\k_1/(2n)}\}.
$$
Next, let
$$
\psi(x)=\frac{1}{e^{\k_2 x_1}+e^{-\k_2 x_1}}\, \times \, \frac{1}{e^{\k_2 x_2}+e^{-\k_2 x_2}}\,\times \, \cdots \,\times\, \frac{1}{e^{\k_2 x_n}+e^{-\k_2 x_n}}.
$$
Then $\psi\in C^\infty(\R^n)$,
$$
\psi_{x_i}(x)=-\psi(x) \frac{\k_2 e^{\k_2  x_i}-\k_2  e^{-\k_2  x_i}}{e^{\k_2  x_i}+e^{-\k_2  x_i}},
$$
and
$$
\psi_{x_ix_i}=2\psi(x) \Big(\frac{\k_2 e^{\k_2  x_i}-\k_2  e^{-\k_2  x_i}}{e^{\k_2  x_i}+e^{-\k_2  x_i}}\Big)^2
-\k_2^2\psi(x).
$$
It then follows that
$$
0<\psi(x)\le e^{-\k_2   |x|},
\qquad |\nabla \psi(x)|\le \k_2 \sqrt n \psi(x)\le \k_1\psi,
$$
and
$$
|\Delta \psi(x)|\le 2\k_2^2n \psi(x)\le \k_1\psi
$$
for all $x\in\R^n$.  The lemma is thus proved.
\end{proof}

\section{A unified method for global existence  of classical solutions}

In this section, we investigate the global existence of classical solutions of \eqref{main-eq1}, \eqref{main-eq2}, and \eqref{main-eq3} in a unified way, and prove Theorem \ref{general-global-existence-thm}.
To do so, we first prove some fundamental properties of classical solutions of \eqref{main-eq1}, \eqref{main-eq2}, and \eqref{main-eq3} in subsection 3.1. We then prove Theorem \ref{general-global-existence-thm} in subsection 3.2.

\subsection{Fundamental propositions}

In this subsection,  we prove some fundamental properties of classical solutions of
\eqref{main-eq1}, \eqref{main-eq2}, or \eqref{main-eq3}. Throughout this subsection,
we assume that $u_0\in C_{\rm unif}^b(\R^n)$ and $v_0\in C_{\rm unif}^{1,b}(\R^n)$ or
$u_0\in L^q(\R^n)$ and $v_0\in W^{1,q}(\R^n)$ for some $q>n$ and $q\ge 2$, and
$u_0\ge 0$, $v_0\ge 0$.  { Note that, by Morrey's inequality, }
$$
{\|v_0\|_{L^\infty(\R^n)}<\infty.}
$$
We assume that  $(u(t,x;u_0,v_0), v(t,x;u_0,v_0))$  is  the classical solution
of \eqref{main-eq1} or \eqref{main-eq2} on the time interval $(0,T_{\max}(u_0,v_0))$ satisfying the initial
condition
$$
\lim_{t\to 0+}\Big(\|u(t,\cdot;u_0,v_0)-u_0(\cdot)\|_X+\|v(t,\cdot;u_0,v_0)-v_0(\cdot)\|_Y\Big)=0,
$$
where  $X= C_{\rm unif}^b(\R^n)$ and $Y= C_{\rm unif}^{1,b}(\R^n)$ or
$X= L^q(\R^n)$ and $Y=W^{1,q}(\R^n)$, and that $(u(t,x;u_0),v(t,x;u_0))$ be the classical solution of \eqref{main-eq3} on $(0,T_{\max}(u_0))$ satisfying the initial condition
$$
\lim_{t\to 0+}\|u(t,\cdot;u_0)-u_0(\cdot)\|_X=0.
$$

We first  show that local $L^p$-bound of $u$ implies local $W^{1,p}$ boundedness of $v$.

\begin{prop}
\label{basic-prop2} Consider \eqref{main-eq1}-\eqref{main-eq3}.
  Let $(u(t,x;u_0,v_0),v(t,x;u_0,v_0))$ be the solution of \eqref{main-eq1} or \eqref{main-eq2},
and $(u(t,x;u_0)$, $v(t,x;u_0))$ be the solution of \eqref{main-eq3}.
Put
$$
(u(t,x),v(t,x))=(u(t,x;u_0,v_0),v(t,x;u_0,v_0))\quad {\rm or}\quad
(u(t,x),v(t,x))=(u(t,x;u_0),v(t,x;u_0))
$$
and
$$
T_{\max}=T_{\max}(u_0,v_0)\quad {\rm or}\quad T_{\max}(u_0).
$$
For given $p\ge 1$, assume that
\begin{equation}
\label{prop2-eq0}
\sup_{t\in [0,T_{\max}),x_0\in\R^n}\int_{B(x_0,1)} u^p(t,x)dx<\infty\quad{\rm and}\quad  \sup_{x_0\in\R^n}\int_{B(x_0,1)}{ |\nabla v_0(x)|^p }dx<\infty.
\end{equation}
Then the following hold.

\begin{itemize}
\item[(1)]
\begin{equation}
\label{prop2-eq1}
\sup_{t\in [0,T_{\max}),x_0\in\R^n}\int_{B(x_0,1)}\big( v^p(t,x)+|\nabla v(t,x)|^p\big)dx<\infty.
\end{equation}

\item[(2)] In addition,  if $p>\frac{n}{4}$,  then
\begin{equation}
\label{prop2-eq2}
\sup_{t\in [0,T_{\max}),x_0\in\R^n}\int_{B(x_0,1)} v^{2p}(t,x)dx<\infty.
\end{equation}

\item[(3)] In addition, if $p>\frac{n}{2}$ and   $ \sup_{x_0\in\R^n}\int_{B(x_0,1)}|\nabla v_0(x)|^{2p}<\infty$, then
\begin{equation}
\label{prop2-eq3}
\sup_{t\in [0,T_{\max}),x_0\in\R^n}\int_{B(x_0,1)} |\nabla v (t,x)|^{2p} dx<\infty.
\end{equation}
\end{itemize}
\end{prop}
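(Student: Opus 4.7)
The strategy is to exploit the linearity of the $v$-equation (given $u$) in each of the three systems and to carry out all estimates locally through the exponentially decaying weight $\psi$ from Lemma \ref{psi-lm}. A routine annular decomposition shows that, for any $p\ge 1$ and nonnegative $f$, the local quantity $\sup_{x_0\in\R^n}\int_{B(x_0,1)}f^p\,dx$ is equivalent, up to constants depending only on $n,p,\kappa_2$, to the translated weighted quantity $\sup_{x_0\in\R^n}\int_{\R^n}f^p(x)\psi(x-x_0)\,dx$, and the same equivalence transfers to $u$, $v$ and $\nabla v$. I work exclusively with the weighted norm.

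For part (1), I treat the three systems in turn. For \eqref{main-eq1}, nonnegativity of $u,v$ forces $\tau v_t\le\Delta v$, so the comparison principle gives $\|v(t,\cdot)\|_\infty\le\|v_0\|_\infty$ and the local $L^p$ bound follows. The gradient bound then comes from the Duhamel identity for $v$ combined with the weighted analogue of the smoothing estimate \eqref{Lp Estimates-3}, using that the source $uv$ lies uniformly in $L^p_{\rm loc}$ because $v\in L^\infty$. For \eqref{main-eq2} I test with $p v^{p-1}\psi(\cdot-x_0)$ and integrate by parts twice; the only error produced by the weight is $\int v^p\Delta\psi$, which Lemma \ref{psi-lm} dominates by $\kappa_1\int v^p\psi$ with $\kappa_1$ at our disposal. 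Choosing $\kappa_1$ small and using Young on the $\mu u v^{p-1}\psi$ term yields a differential inequality of the form $\frac{d}{dt}\int v^p\psi\le -c\int v^p\psi + C\sup_{y_0}\int u^p\psi(\cdot-y_0)$, which Gronwall closes uniformly in $x_0$; the bound on $\nabla v$ then follows from a weighted local application of Lemma \ref{maximal-regularity-lm} on moving time windows of length one. For \eqref{main-eq3} the same multiplier argument without the time derivative gives the $L^p$ bound on $v$, and the identity $\Delta v=\lambda v-\mu u$ together with weighted $W^{2,p}$ regularity of $(-\Delta+\lambda)^{-1}$ gives the $L^p$ bound on $\nabla v$.

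Parts (2) and (3) are smoothing/embedding bootstraps from the bounds of part (1). In the parabolic cases, Duhamel combined with the weighted version of \eqref{Lp Estimates-2} gives $\|v(t)\|_{L^{2p}_{\rm loc}}\le C\|v_0\|_{L^\infty}+C\int_0^t(t-s)^{-n/(4p)}e^{-c(t-s)}\|u(s)\|_{L^p_{\rm loc}}\,ds$, whose short-time singularity is integrable precisely when $n/(4p)<1$, i.e.\ $p>n/4$. In the elliptic case \eqref{main-eq3} the same exponent is the Hardy--Littlewood--Sobolev exponent of the Bessel potential, so that $(-\Delta+\lambda)^{-1}\colon L^p_{\rm loc}\to L^{np/(n-2p)}_{\rm loc}$ yields the $L^{2p}_{\rm loc}$ bound under the same threshold. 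The analogous gradient smoothing $\|\nabla T(t)f\|_{L^{2p}_{\rm loc}}\le Ct^{-1/2-n/(4p)}\|f\|_{L^p_{\rm loc}}$ (from \eqref{Lp Estimates-3}) pushes the integrability threshold to $p>n/2$, giving part (3); the initial term is controlled by the extra $L^{2p}_{\rm loc}$ assumption on $\nabla v_0$, which is the reason that hypothesis appears in (3) but not in (2).

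The main obstacle is converting the standard global $\R^n$-based semigroup and maximal-regularity estimates into their weighted, localized forms with constants uniform in $x_0$ and $t$. The exponential decay of $\psi$ together with the Gaussian or exponential decay of the heat and Bessel kernels is precisely what makes these local-to-local estimates close; the nontrivial bookkeeping lies in controlling the commutators of the weight with the integration-by-parts operations so that the dissipative Gronwall step is preserved, and, in the parabolic case \eqref{main-eq2}, in localizing Lemma \ref{maximal-regularity-lm} on translated windows in such a way that the resulting constants are independent of the starting point $t_0$.
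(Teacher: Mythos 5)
Your overall framework—reducing the local $L^p$ bounds to weighted $L^p$ bounds via $\psi$, exploiting the linearity of the $v$-equation, and reading off the thresholds $p>n/4$ and $p>n/2$ from the integrability of the smoothing kernels at $t=0$—matches the paper's, and your handling of \eqref{main-eq1} and of parts (2) and (3) is essentially the paper's route via Duhamel and $\eqref{Lp Estimates-2}$–$\eqref{Lp Estimates-3}$ with $p'=2p$. Where you genuinely diverge is in part (1) for \eqref{main-eq2} and \eqref{main-eq3}: the paper does not use a multiplier/energy argument at all here. It writes a Duhamel (resp.\ resolvent) representation for $v\psi$, derives the pair of estimates \eqref{main-eq2-estimate-eq1}–\eqref{main-eq2-estimate-eq2} (resp.\ \eqref{main-eq3-estimate-eq1}–\eqref{main-eq3-estimate-eq2}) with $p'=p$, and closes by absorption: the $\kappa_1$-multiplied terms on the right are made small enough to be swallowed, producing \eqref{proof-prop2-eq4} and thus bounding $\|v\psi\|_{L^p}+\|\nabla v\psi\|_{L^p}$ simultaneously. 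Your energy argument for the $v$-bound on \eqref{main-eq2} and \eqref{main-eq3} is correct and a reasonable alternative; it buys a differential inequality closed by Gronwall rather than a fixed-point absorption.

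The gap is in your proposed derivation of the $\nabla v$-bound in part (1) for the parabolic cases. Lemma \ref{maximal-regularity-lm} produces an $L^\gamma_t W^{2,\gamma}_x$ bound (a time-\emph{integrated} estimate), whereas \eqref{prop2-eq1} requires $\sup_{t}\|\nabla v(t,\cdot)\|_{L^p_{\rm loc}}<\infty$, a time-\emph{pointwise} estimate. Restricting the maximal regularity estimate to windows of length one does not automatically yield a $\sup_t$ bound; you would additionally need the trace embedding $W^{1,\gamma}((t_0,t_0+1),L^\gamma)\cap L^\gamma((t_0,t_0+1),W^{2,\gamma})\hookrightarrow C([t_0,t_0+1],W^{1,\gamma})$ with a constant uniform across translated windows, and this has to be argued (it is not stated in the paper). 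The more direct route—and the one the paper takes—is the gradient smoothing estimate $\|\nabla T_p(t)f\|_{L^p}\lesssim t^{-1/2}e^{-t}\|f\|_{L^p}$ from \eqref{Lp Estimates-3}/Lemma \ref{L_pbound}, applied to the Duhamel formula for $v\psi$: this gives the pointwise-in-time $L^p$ bound on $\nabla v$ at once, with no need for maximal regularity at this stage. (The paper reserves Lemma \ref{maximal-regularity-lm} for the later space-time integrated estimates in Theorems \ref{main-thm1} and \ref{main-thm2}, where the $L^\gamma_tL^\gamma_x$ output is exactly what is needed.) I recommend either replacing your maximal-regularity step by the semigroup gradient smoothing, or explicitly supplying the uniform trace-embedding argument you are implicitly relying on.
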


\begin{proof}
 First of all, we    note that,
to  prove \eqref{prop2-eq1}, \eqref{prop2-eq2}, and \eqref{prop2-eq3},
 it  suffices to prove
\begin{equation}
\label{proof-prop2-eq1}
\sup_{t\in[0,T_{\max}),x_0\in\R^n}\int_{\R^n}\Big(v^p(t,x+x_0)\psi^p(x)+|\nabla v|^p(t,x+x_0)\psi^p(x)\Big)dx<\infty,
\end{equation}
\begin{equation}
\label{proof-prop2-eq2}
\sup_{t\in [0,T_{\max}),x_0\in\R^n}\int_{\R^n} v^{2p}(t,x+x_0)\psi^{2p}(x)dx<\infty,
\end{equation}
and
\begin{equation}
\label{proof-prop2-eq3}
\sup_{t\in [0,T_{\max}),x_0\in\R^n}\int_{\R^n} |\nabla v|^{2p}(t,x+x_0)\psi^{2p}(x)dx<\infty,
\end{equation}
respectively,
for some  $\psi\in C^\infty(\R^n)$   satisfying \eqref{psi-eq00}, that is,
\begin{equation}
\label{psi-eq00-1}
  0<\psi(x)\le e^{-\k_2|x|},\quad |\nabla \psi(x)|\le \k_1 \psi(x),\quad |\Delta \psi(x)|\le \k_1\psi \quad  \forall\, x\in\R^n
\end{equation}
for some $\k_1,\k_2>0$.
If no confusion occurs, we put
$$
u=u(t,x+x_0)\quad {\rm and}\quad v=v(t,x+x_0).
$$
If $(u,v)$ is the solution of \eqref{main-eq1}, then
 \begin{equation}
\label{proof-prop1-eq1-0}
\|v(t,\cdot)\|_\infty\le \|v_0\|_\infty\quad \forall\, t\in [0,T_{\max}).
\end{equation}

Next, we note that, in the case that $(u,v)$ is the  solution of \eqref{main-eq1}, we have for any $\lambda>0$,
\begin{align*}
\tau \frac{d}{dt}(v \psi)
&=\Delta(v\psi )-  \lambda v \psi+\lambda v\psi -2\nabla v\cdot\nabla \psi-v\Delta \psi-\psi uv,\quad x\in\R^n.
\end{align*}
Hence
\begin{align}
\label{main-eq1-psi}
v(t,\cdot)\psi&=e^{(\Delta-\lambda I)\frac{ t}{\tau}} v_0\psi\nonumber\\
&  +\frac{1}{\tau}\int_0^t e^{(\Delta-\lambda I)\frac{t-s}{\tau}}\Big(\lambda v(s,\cdot)\psi-2\nabla v(s,\cdot)\cdot\nabla \psi-v(s,\cdot)\Delta\psi-\psi u(s,\cdot)v(s,\cdot)\Big)ds.
\end{align}
In the case  that $(u(t,x),v(t,x))=(u(t,x;u_0,v_0),v(t,x;u_0,v_0))$ is the  solution of \eqref{main-eq2}, we have
\begin{align*}
\tau \frac{d}{dt}(v \psi)
&=\Delta(v\psi)-  \lambda  v \psi-2\nabla v\cdot\nabla \psi-v\Delta\psi+\mu \psi u.
\end{align*}
Hence
\begin{equation}
\label{main-eq2-psi}
v(t,\cdot)\psi=e^{(\Delta-\lambda I)\frac{t}{\tau}} v_0\psi+\frac{1}{\tau}\int_0^t e^{(\Delta-\lambda I)\frac{t-s}\tau}
\Big(-2\nabla v(s,\cdot)\cdot\nabla \psi-v(s,\cdot)\Delta \psi+\mu u(s,\cdot)\psi \Big)ds.
\end{equation}
In the case that $(u,v)$ is  a solution of \eqref{main-eq3}, we have
\begin{equation*}
\Delta (v\psi)-\lambda v \psi -2\nabla v\cdot\nabla \psi-v\Delta \psi +\mu u\psi=0.
\end{equation*}
Hence, {using the resolvent of $(\Delta -\lambda I)$ in $\R^n$ yields}
\begin{equation}
\label{main-eq3-psi}
v(t,\cdot)\psi(\cdot)=\int_0^\infty e^{(\Delta -\lambda I)s}\Big(-2\nabla v(t,\cdot)\cdot\nabla \psi-v(t,\cdot)\Delta \psi+\mu u(t,\cdot)\psi\Big)ds.
\end{equation}

In the following, we provide some estimates for $\|v\psi\|_{L^{p'}}$ and $\|(\nabla v )\psi\|_{L^{p'}}$ for $p'\ge p$, which will play essential roles in the proofs of \eqref{proof-prop2-eq1}, \eqref{proof-prop2-eq2}, and \eqref{proof-prop2-eq3}.

  Note that
\begin{equation}
\label{proof-prop2-eq00}
e^{(\Delta-\lambda I)t}=e^{(\Delta -I)t} e^{(1-\lambda)t}\quad \forall\, t\ge 0.
\end{equation}
In the case that $(u,v)$ is the  solution of \eqref{main-eq1},  we have that for any $p\le p'$ and $t\in [0,T_{\max})$,
\begin{align}
\label{main-eq1-estimate-eq1}
\|v(t,\cdot)\psi\|_{L^{p'}} \le \|v_0\|_\infty\|\psi\|_{L^{p'}},
\end{align}
and by \eqref{main-eq1-psi},
\begin{align*}
\| (\nabla  v (t,\cdot))\psi\|_{L^{p'}}&=\|\nabla (v (t,\cdot) \psi)-v(t,\cdot)\nabla \psi\|_{L^{p'}}\nonumber\\
&\le \|v(t,\cdot)\nabla \psi\|_{L^{p'}}+ \|\nabla e^{(\Delta-\lambda I)\frac{t}{\tau}} (v_0\psi)\|_{L^{p'}}\nonumber\\
&\quad +\frac{1}{\tau}
\int_0^t \left\|\nabla e^{(\Delta-\lambda I)\frac{t-s}{\tau}}\Big(\lambda v\psi-2\nabla v\cdot\nabla \psi-v\Delta\psi-\psi uv\Big)\right\|_{L^{p'}}ds\nonumber
\end{align*}
Using \eqref{Lp Estimates-2},  \eqref{Lp Estimates-3}, \eqref{psi-eq00-1},  \eqref{proof-prop1-eq1-0}  and \eqref{proof-prop2-eq00}, we get
\beq\label{main-eq1-estimate-eq2}
\begin{aligned}
&\| (\nabla  v (t,\cdot))\psi\|_{L^{p'}}    \leq   {\k_1\|v_0\|_\infty}\|\psi\|_{L^{p'}}+C_{ p'} e^{- \frac{ \lambda t}{\tau}}\|\nabla(v_0\psi)\|_{ L^{p'}} \\
&\quad + \frac{2 \k_1 C_{p,p'}}{\tau}\sup_{r\in [0,t]} \|(\nabla v(r,\cdot)) \psi\|_{L^p} \int_0^t e^{-\lambda \frac{t-s}{\tau}}\Big( \frac{t-s}{\tau}\Big)^{-\frac{1}{2}-\big(\frac{1}{p}-\frac{1}{p'}\big)\frac{n}{2}} ds\\
&\quad + \frac{  C_{p,p'} \|v_0\|_\infty}{\tau}  \Big( ( \lambda+\k_1) \|\psi\|_{L^p}+{\sup_{r\in [0,t]}}\|u(r,\cdot)\psi\|_{L^p}\Big) \int_0^t e^{-\lambda \frac{t-s}{\tau}}\Big( \frac{t-s}{\tau}\Big)^{-\frac{1}{2}-\big(\frac{1}{p}-\frac{1}{p'}\big)\frac{n}{2}}ds.
\end{aligned}
\eeq
In the case  that $(u(t,x),v(t,x))=(u(t,x;u_0,v_0),v(t,x;u_0,v_0))$ is the  solution of \eqref{main-eq2},  by   \eqref{Lp Estimates-2},  \eqref{Lp Estimates-3},  \eqref{main-eq2-psi}, and \eqref{proof-prop2-eq00},   we have that for any $p\le p'<\infty$ and $t\in [0,T_{\max})$,
\begin{align}
\label{main-eq2-estimate-eq1}
&\quad\, \|v(t,\cdot)\psi\|_{L^{p'}}\nonumber\\
&\le 
{C_p' e^{- \frac{ \lambda t}{\tau}}\|v_0\psi\|_{ L^{p'}}}+\frac{C_{p,p'} \mu }{\tau}\sup_{r\in [0,t]}\|u(r,\cdot)\psi\|_{L^p} \int_0^t  e^{-\lambda \frac{t-s}{\tau}}
\Big(\frac{t-s}{\tau}\Big)^{-\big(\frac{1}{p}-\frac{1}{p'}\big)\frac{n}{2}} ds\nonumber\\
&\quad + \frac{C_{p,p'}\kappa_1}\tau \sup_{r\in [0,t]}\Big( 2 \| \nabla v(r,\cdot) \psi\|_{L^p}  +   \|v(r,\cdot)\psi \|_{L^p}\Big)\int_0^t  e^{-\lambda \frac{t-s}{\tau} } \Big(\frac{t-s}{\tau}\Big)^{-\big(\frac{1}{p}-\frac{1}{p'}\big)\frac{n}{2}}ds,
\end{align}
and, similarly as before,
\begin{align}
\label{main-eq2-estimate-eq2}
&\quad\,\,\|\nabla (v(t,\cdot)\psi)\|_{L^{p'}}=\|\nabla (v(t,\cdot)\psi)-v(t,\cdot)\nabla \psi\|_{L^{p'}}\nonumber\\
&\le \k_1\|v(t,\cdot)\psi\|_{L^{p'}}+\|{\nabla e^{(\Delta-\lambda I)\frac{ t}\tau}(v_0\psi)}\|_{L^{p'}}\nonumber\\
& +\frac{C_{p,p'}}{\tau}\int_0^t e^{-\lambda \frac{t-s}\tau}\Big(\frac{t-s}{\tau}\Big)^{-\frac{1}{2}-\big(\frac{1}{p}-\frac{1}{p'}\big)\frac{n}{2}}\Big( 2\k_1\|{(\nabla v(s,\cdot))}\psi\|_{L^p}+\k_1\|v(s,\cdot)\psi\|_{L^p}+\mu \|u(s,\cdot)\psi\|_{L^p}\Big)ds\nonumber\\
&\le  \k_1{\|v(t,\cdot)\psi\|_{L^{p'}}} +
{ C_{p'}e^{-\frac{\lambda t}{\tau}}\|\nabla (v_0\psi)\|_{L^{p'}}} \nonumber\\
& +\frac{C_{p,p'}}{\tau}  \mu \sup_{r\in [0,t]}\|u(r,\cdot)\psi\|_{L^p} \int_0^t e^{-\lambda \frac{t-s}\tau}\Big(\frac{t-s}{\tau}\Big)^{-\frac{1}{2}-\big(\frac{1}{p}-\frac{1}{p'}\big)\frac{n}{2}}ds \nonumber\\
& +\frac{C_{p,p'}\kappa_1}{\tau}\sup_{r\in [0,t]} \Big( 2\|(\nabla v(r,\cdot))\psi\|_{L^p}+\|v(r,\cdot)\psi\|_{L^p}\Big)\int_0^t e^{-\lambda \frac{t-s}\tau}\Big(\frac{t-s}{\tau}\Big)^{-\frac{1}{2}-\big(\frac{1}{p}-\frac{1}{p'}\big)\frac{n}{2}}ds.
\end{align}

In the case that $(u,v)$ is  a solution of \eqref{main-eq3},  by  \eqref{Lp Estimates-2},  \eqref{Lp Estimates-3},   \eqref{main-eq3-psi}, and \eqref{proof-prop2-eq00},  we have
that for any $p { \le p'}<\infty$ and $t\in[0,T_{\max})$,
\begin{align}
\label{main-eq3-estimate-eq1}
&\quad\,\|v(t,\cdot)\psi(\cdot)\|_{L^{p'}}\nonumber\\
&\le C_{p,p'}  \int_0^\infty e^{-\lambda s} s^{-\big(\frac{1}{p}-\frac{1}{p'}\big)\frac{n}{2}}\Big(2 \k_1\|(\nabla v (t,\cdot)) \psi\|_{L^p}+\k_1\|v(t,\cdot) \psi\|_{L^p}+\mu \|u(t,\cdot)\psi\|_{L^p}\Big)ds\nonumber\\
&\le C_{p,p'}  \mu \|u(t,\cdot)\psi\|_{L^p}  \int_0^\infty e^{-\lambda s} s^{-\big(\frac{1}{p}-\frac{1}{p'}\big)\frac{n}{2}}ds\nonumber\\
&\quad  + C_{p,p'}  \Big(2 \k_1 \|\nabla v (t,\cdot) \psi\|_{L^p}+\k_1
\|v(t,\cdot) \psi\|_{L^p}\Big) \int_0^\infty e^{-\lambda s} s^{-\big(\frac{1}{p}-\frac{1}{p'}\big)\frac{n}{2}}ds,
\end{align}
and
\begin{align}
\label{main-eq3-estimate-eq2}
&\quad\,\,\|(\nabla v(t,\cdot)) \psi\|_{L^{p'}}=\|\nabla (v(t,\cdot)\psi)-v(t,\cdot)\nabla \psi\|_{L^{p'}}\nonumber\\
&\le \|v(t,\cdot) \nabla \psi\|_{L^{p'}}+\|\nabla (v(t,\cdot)\psi)\|_{L^{p'}}\nonumber\\
&\le \k_1\|v(t,\cdot)\psi\|_{L^{p'}}+C_{p,p'}\int_0^\infty e^{-\lambda s} s^{-\frac{1}{2}-\big(\frac{1}{p}-\frac{1}{p'}\big)\frac{n}{2}}\left\|-2\nabla v(t,\cdot)\cdot\nabla \psi-v(t,\cdot)\Delta \psi+\mu u(t,\cdot)\psi\right\|_{L^p} ds\nonumber\\
&\le \k_1{\|v(t,\cdot)\psi\|_{L^{p'}}}+C_{p,p'} \mu \|u(t,\cdot)\psi\|_{L^p}\int_0^\infty e^{-\lambda s}s^{-\frac{1}{2}-\big(\frac{1}{p}-\frac{1}{p'}\big)\frac{n}{2}} ds\nonumber\\
&\quad + C_{p,p'}\k_1 \Big(2 \|\nabla v(t,\cdot;u_0)\psi\|_{L^p}+\|v(t,\cdot)\psi\|_{L^p}\Big) \int_0^\infty e^{-\lambda s}s^{-\frac{1}{2}-\big(\frac{1}{p}-\frac{1}{p'}\big)\frac{n}{2}} ds.
\end{align}

We now prove (1), (2), and (3).

\smallskip

(1) As it is mentioned in the above, to prove \eqref{prop2-eq1}, it suffices to prove \eqref{proof-prop2-eq1}.

Suppose that  $\psi\in C^\infty(\R^n)$ satisfies \eqref{psi-eq00-1} for some $\k_1,\k_2>0$.
We claim that there are constant $C>0$ independent of $\k_1$ and constant $\tilde C>0$ such that
\begin{align}
\label{proof-prop2-eq4}
\| v(t,\cdot)\psi\|_{L^p}+\|\nabla v(t,\cdot) \psi\|_{L^p}\le \tilde  C +C \k_1  \sup_{s\in [0,t]}\Big(\|v(s,\cdot) \psi\|_{L^p}+\|\nabla v(s,\cdot) \psi\|_{L^p}\Big) \quad\forall\, t\in [0,T_{\max}).
\end{align}

Before proving \eqref{proof-prop2-eq4}, we claim that \eqref{proof-prop2-eq1} follows from
    \eqref{proof-prop2-eq4} by choosing $0<\k_1\ll 1$.  In fact, assume that \eqref{proof-prop2-eq4} holds. Then for any $0\leq t<T_{\max}$, we have
\begin{align*}
\| v(t,\cdot)\psi\|_{L^p}+\|\nabla v(t,\cdot) \psi\|_{L^p}
\le \tilde C+ C\k_1  \sup_{s\in [0,T_{\max})} \Big(\|v(s,\cdot) \psi\|_{L^p}+\|\nabla v(s,\cdot) \psi\|_{L^p}\Big).
\end{align*}
This implies that
$$
\Big(1-C\k_1 \Big)\sup_{t\in [0,T_{\max})} \Big(\|v(t,\cdot)\psi\|_{L^p}+\|\nabla v(t,\cdot)\psi\|_{L^p}\Big)\le \tilde C.
$$
\eqref{proof-prop2-eq1} then follows by choosing $\psi\in C^\infty(\R^n)$ satisfying  \eqref{psi-eq00} with $0<\k_1\ll 1$.

\smallskip

We now prove \eqref{proof-prop2-eq4}.
In the case that $(u,v)$ is the  solution of \eqref{main-eq1},  \eqref{proof-prop2-eq4} follows from \eqref{main-eq1-estimate-eq1} and \eqref{main-eq1-estimate-eq2} with $p'=p$.
In the case that $(u,v)$ is the solution of \eqref{main-eq2},  \eqref{proof-prop2-eq4} follows from \eqref{main-eq2-estimate-eq1} and \eqref{main-eq2-estimate-eq2} with $p'=p$. In the case that $(u,v)$ is the solution of \eqref{main-eq3},  \eqref{proof-prop2-eq4} follows from \eqref{main-eq3-estimate-eq1} and \eqref{main-eq3-estimate-eq2} with $p'=p$. The  proof of (1) is thus completed.

\smallskip

(2) As it is mentioned in the above, to prove \eqref{prop2-eq2}, it suffices to prove \eqref{proof-prop2-eq2}.
Note that when $p>\frac{n}{4}$ and $p'=2p$,
$$
\int_0^\infty e^{-\lambda s} s^{-\big(\frac{1}{p}-\frac{1}{p'}\big)\frac{n}{2}}ds<\infty.
$$
 Then,
in the case  $(u,v)$ is the solution of \eqref{main-eq1},  \eqref{proof-prop2-eq2} follows from   \eqref{main-eq1-estimate-eq1} with $p'=2p$.
In the case that  $(u,v)$ is the solution of \eqref{main-eq2},   \eqref{proof-prop2-eq2} follows from \eqref{proof-prop2-eq1} and \eqref{main-eq2-estimate-eq1} with $p'=2p$.  In the case that  $(u,v)$ is the solution of \eqref{main-eq3},   \eqref{proof-prop2-eq2} follows from \eqref{proof-prop2-eq1} and \eqref{main-eq3-estimate-eq1} with $p'=2p$.

\smallskip

(3)   As it is mentioned in the above, to prove \eqref{prop2-eq3}, it suffices to prove \eqref{proof-prop2-eq3}.  Note that when $p>\frac{n}{2}$ and $p'=2p$,
$$
\int_0^\infty e^{-\lambda s}s^{-\frac{1}{2}-\big(\frac{1}{p}-\frac{1}{p'}\big)\frac{n}{2}} ds<\infty.
$$
Hence,
in the case  $(u,v)$ is the solution of \eqref{main-eq1},  \eqref{proof-prop2-eq3} follows from  \eqref{proof-prop2-eq1} and  \eqref{main-eq1-estimate-eq2} with $p'=2p$.
In the case that  $(u,v)$ is the solution of \eqref{main-eq2},   \eqref{proof-prop2-eq3} follows from \eqref{proof-prop2-eq1}, \eqref{proof-prop2-eq2},  and \eqref{main-eq2-estimate-eq2} with $p'=2p$.  In the case that  $(u,v)$ is the solution of \eqref{main-eq3},   \eqref{proof-prop2-eq3} follows from \eqref{proof-prop2-eq1}, \eqref{proof-prop2-eq2},  and \eqref{main-eq3-estimate-eq2} with $p'=2p$.
\end{proof}

In the following proposition, we show that the $W^{1,p}$ estimate of $v$ obtained from the previous proposition can be used to improve the $L^p$-norm of $u$.

\begin{prop}\label{basic-prop3}
 Consider \eqref{main-eq1}-\eqref{main-eq3}.
  Let $(u(t,x;u_0,v_0),v(t,x;u_0,v_0))$ be the solution of \eqref{main-eq1} or \eqref{main-eq2},
and $(u(t,x;u_0)$, $v(t,x;u_0))$ be the solution of \eqref{main-eq3}.
Put
$$
(u(t,x),v(t,x))=(u(t,x;u_0,v_0),v(t,x;u_0,v_0))\quad {\rm or}\quad
(u(t,x),v(t,x))=(u(t,x;u_0),v(t,x;u_0))
$$
and
$$
T_{\max}=T_{\max}(u_0,v_0)\quad {\rm or}\quad T_{\max}(u_0).
$$
Assume that there is $p>\max\{1,\frac{n}{2}\}$  such that
\begin{equation}
\label{prop2-eq00}
\sup_{t\in [0,T_{\max}),x_0\in\R^n}\int_{B(x_0,1)} u^p(t,x)dx<\infty\quad{\rm and}\quad  \sup_{x_0\in\R^n}\int_{B(x_0,1)} (u^{2p}_0(x)+ |\nabla v_0(x)|^{2p})dx<\infty.
\end{equation}
Then there is  $\gamma>n$ such that
\begin{equation}
\label{general-thm-proof-eq1}
\sup_{t\in [0,T_{\max})),x_0\in\R^n}\int_{B(x_0,1)} u^\gamma(t,x)dx<\infty.
\end{equation}
\end{prop}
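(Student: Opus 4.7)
The plan is to carry out a Moser-type iteration in which each step combines the $L^{2q}_{\rm uloc}$-regularity of $\nabla v$ (supplied by Proposition \ref{basic-prop2}) with a weighted energy estimate to upgrade a uniform local $L^q$-bound on $u$ to a uniform local $L^{qn/(n-2)}$-bound (for $n \geq 3$; for $n \leq 2$ any finite $L^s$-bound is immediate from the Sobolev embedding of $H^1$). Starting from $q_0 = p > n/2$, the iteration $q_{k+1} = q_k n/(n-2)$ is strictly increasing, so finitely many steps produce an exponent $\gamma := q_K > n$, as required.

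Concretely, at each step we test the $u$-equation against $q u^{q-1}\psi^a$ for $\psi$ from Lemma \ref{psi-lm} (with $\kappa_1$ small) and a suitable $a > 0$. After integration by parts, Young's inequality, and the absorption of $|\nabla \psi|, |\Delta\psi| \leq \kappa_1 \psi$ into the lower-order terms, we arrive at the schematic differential inequality
\beq
\frac{d}{dt}\int u^q \psi^a \, dx \leq -c_q \int \psi^a |\nabla u^{q/2}|^2 \, dx + C_q \int u^q \psi^a |\nabla v|^2 \, dx + C \int u^q \psi^a\, dx - qb\int u^{q+1}\psi^a \, dx.
\eeq
Hölder combined with the uniform local $L^{2q}$-bound on $\nabla v$ controls the chemotaxis term by $K \bigl(\int u^{q^2/(q-1)}\psi^a\bigr)^{(q-1)/q}$ for a uniform constant $K$. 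Because $q > n/2$ ensures $q^2/(q-1) \leq qn/(n-2)$, a weighted Gagliardo-Nirenberg-Sobolev inequality applied to $u^{q/2}\psi^{a/2}$ dominates $\int u^{q^2/(q-1)}\psi^a$ by an $\epsilon$-multiple of the dissipation $\int \psi^a|\nabla u^{q/2}|^2$ plus a constant times a power of $\int u^q\psi^a$. Absorbing the $\epsilon$-term into the dissipation and using the logistic contribution $-qb\int u^{q+1}\psi^a$ together with Young's inequality to dominate $C\int u^q\psi^a$ closes the differential inequality and yields uniform-in-time control of $X(t) := \int u^q(t,\cdot)\psi^a\, dx$. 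A further application of Sobolev to $u^{q/2}\psi^{a/2}$ then extracts the desired uniform local $L^{qn/(n-2)}$-bound on $u$, using translation invariance in $x_0$ and the exponential decay of $\psi$ to pass from $\psi$-weighted integrals to the $\sup_{x_0} \int_{B(x_0,1)}$ formulation.

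To iterate beyond the first step, one re-applies Proposition \ref{basic-prop2} with the improved exponent $q_1 = pn/(n-2) > p$. Since its hypothesis includes a local $L^{2q_1}$-bound on $\nabla v_0$ (which is not given a priori), we restart the argument at some $t_0 > 0$, where parabolic regularization (established in Proposition \ref{local-existence-prop1}) places $v(t_0, \cdot) \in C_{\rm unif}^{1,b}(\R^n)$ and hence in every local $L^r$ uniformly on compact subintervals of $(0, T_{\max})$; the bound on $[t_0, T_{\max})$ is then combined with the routine short-time bound on $[0, t_0]$ to conclude. The main obstacle is the precise alignment of the $\psi$-weights between the energy, Hölder, and Sobolev estimates: Sobolev applied to $u^{q/2}\psi^{a/2}$ produces the weight $\psi^{an/(n-2)}$ on the $L^{2^*}$ side, which does not exactly match $\psi^a$ in the energy inequality. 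This mismatch is resolved either by tuning the exponent of $\psi$ in the test function so that the weights line up across all estimates, or by replacing $\psi$ with a compactly supported cutoff and using the standard Sobolev inequality on compactly supported functions, for which the weight issue disappears.
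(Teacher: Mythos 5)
Your outline captures the right ingredients (weighted test function, H\"older against the $L^{2p}_{\rm uloc}$ bound on $\nabla v$ from Proposition \ref{basic-prop2}, Gagliardo--Nirenberg, logistic absorption), but the paper proves the proposition in a \emph{single step}, not by Moser iteration: it simply picks $\gamma\in\big(\max\{n,\tfrac{2p(p-1)}{n}\},\,2p\big)$ directly. Because $p>n/2$ gives $2p>n$, that interval is nonempty, and because $\gamma<2p$ the H\"older factor $\big(\int|\nabla v|^{2p}\psi^{\beta p}\big)^{1/p}$ is controlled by Proposition \ref{basic-prop2}(3) using exactly the hypothesis $\sup_{x_0}\int_{B(x_0,1)}|\nabla v_0|^{2p}<\infty$. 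Your iteration scheme with $q_{k+1}=q_k n/(n-2)$ is therefore unnecessary, and it forces you into the awkward position of needing $L^{2q_k}_{\rm uloc}$ bounds on $\nabla v_0$ (not given), which you patch with a $t_0>0$ restart; the one-step choice sidesteps this entirely.

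More importantly, you correctly diagnose the weight-mismatch problem in the Gagliardo--Nirenberg step but do not actually resolve it: ``tuning the exponent of $\psi$'' is not substantiated, and switching to a compactly supported cutoff reintroduces the very difficulty of combining local estimates into a $\sup_{x_0}$ bound. The paper's resolution here is the genuine technical content: after H\"older it writes $\big(\int u^{\gamma p/(p-1)}\psi^{\alpha p/(p-1)}\big)^{(p-1)/p}$ with a free splitting parameter $\alpha+\beta=1$, covers $\R^n$ by overlapping unit balls $R_k$ centered on a lattice, uses that the exponentially decaying $\psi$ is comparable (within a factor $C_1$) to the constant $\psi(x_k)$ on each $R_k$ (see \eqref{3.26}), applies the \emph{unweighted} Gagliardo--Nirenberg inequality on each $R_k$ with interpolation exponent $\theta=\tfrac{n\gamma-np+n}{n\gamma-np+2p}$ (the constraint $\gamma>2p(p-1)/n$ is precisely what ensures $\theta>(p-1)/p$), and then chooses $\alpha=(1+\theta)/2$ so that Young's inequality leaves a summable remainder $\sum_k\psi^{1/2}(x_k)<\infty$. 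This ball-decomposition plus near-constancy argument is what lets the gradient term be absorbed into $-\int u^{\gamma-2}|\nabla u|^2\psi$, and it is the step your proposal leaves open.
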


\begin{proof} First of all, note that
if $p>n$, nothing needs to be proved.  When $n=1$, $p>\max\{1,\frac{n}{2}\}$ implies that $p>1=n$. Therefore, in the following, we assume that
$$
n\ge 2\quad {\rm and}\quad \frac{n}{2}<p\le n.
$$
Let $\psi$ be as in \eqref{psi-eq00}.  Then $u_0\in L^\gamma(B(x_0,1))$ for any $x_0\in\R^n$ and $\gamma>n$,  and
to prove \eqref{general-thm-proof-eq1}, it suffices to prove
\begin{equation}
\label{general-thm-proof-eq2}
\sup_{t\in[0,T_{\max}(u_0,v_0)),x_0\in\R^n}\int_{\R^n} u^\gamma (t,x+x_0;u_0,v_0)\psi(x)dx<\infty
\end{equation}
for some $\gamma>n$.
If no confusion occurs, we put
$$
u=u(t,x+x_0;u_0,v_0),\quad v=v(t,x+x_0;u_0,v_0).
$$

Next, fix $\gamma\in \big(\max\{n,\frac{2p(p-1)}{n}\},2p\big)$.
Note that, by Young's inequality,
\begin{align}
\label{new-eq0}
\frac{1}{\gamma}\frac{d}{dt}\int_{\R^n}u^\gamma \psi &=\int_{\R^n} u^{\gamma-1} \psi u_t\nonumber\\
&=\int_{\R^n} u^{\gamma -1}\psi \Delta u-\int_{\R^n}u^{\gamma-1}\psi \nabla\cdot(\chi u\nabla v)+a\int_{\R^n} u^\gamma \psi- b\int_{\R^n}u^{\gamma+1}\psi\nonumber\\
&=-(\gamma-1)\int_{\R^n}u^{\gamma-2}\psi|\nabla u|^2-\int_{\R^n}u^{\gamma-1}\nabla \psi\cdot\nabla u +\chi(\gamma-1)\int_{\R^n}
u^{\gamma-1}\psi\nabla u\cdot\nabla v\nonumber\\
&\quad  +\chi\int_{\R^n} u^\gamma \nabla v\cdot\nabla \psi+a\int_{\R^n} u^\gamma \psi- b\int_{\R^n}u^{\gamma+1}\psi\nonumber\\
&\le -\Big(\frac{\gamma-1}{2}-\frac{\k_1}{2}\Big)\int_{\R^n}u^{\gamma-2}\psi|\nabla u|^2+\frac{\chi^2(\gamma-1)}{2}\int_{\R^n}u^\gamma|\nabla v|^2\psi\nonumber\\
&\quad +\chi\int_{\R^n} u^\gamma \nabla v\cdot\nabla \psi +(a+\frac{\k_1}{2})\int_{\R^n} u^\gamma \psi- b\int_{\R^n}u^{\gamma+1}\psi\nonumber\\
&\le -\Big(\frac{\gamma-1}{2}-\frac{\k_1}{2}\Big)\int_{\R^n}u^{\gamma-2}\psi|\nabla u|^2+\Big(\frac{\chi^2(\gamma-1)}{2}+\frac{\k_1|\chi|}{2}\Big)\int_{\R^n}u^\gamma|\nabla v|^2\psi\nonumber\\
&\quad +\big(a+\frac{\k_1(|\chi|+1)}{2}\big)\int_{\R^n} u^\gamma \psi- b\int_{\R^n}u^{\gamma+1}\psi.
\end{align}
By H\"older inequality, we have
\beq\lb{3.25}
\int_{\R^n}u^\gamma |\nabla v|^2\psi\le \Big(\int_{\R^n} u^{\frac{\gamma p}{p-1}}\psi^{\frac{\alpha p}{p-1}}\Big)^{\frac{p-1}{p}}\Big(\int_{\R^n}|\nabla v|^{2p}\psi^{\beta p}\Big)^{\frac{1}{p}},
\eeq
for any $\alpha,\beta\in (0,1)$ with $\alpha+\beta=1$.

In the following, we estimate $\Big(\int_{\R^n} u^{\frac{\gamma p}{p-1}}\psi^{\frac{\alpha p}{p-1}}\Big)^{\frac{p-1}{p}}$.
Let {$\{x_k\}_{k\ge 1}=\{ x/\sqrt{n}\,|\, x\in\Z^n\}$} and so
$$
\R^n=\cup_{k=1}^\infty B(x_k, 1):=\cup_{k=1}^\infty R_k,
$$
where $R_k$ is an open ball centred at $x_k$ with radius $1$ and each $x\in\R^n$ belongs to at most  $M$ balls $R_k$ with $M$ depending only on $n$.
Note that there is $C_1>1$ such that
\beq\lb{3.26}
\frac{1}{C_1}\psi(x)\le \psi(x_k)\le C_1\psi(x)\quad \forall\, x\in R_k,\,\,\, k=1,2,\cdots.
\eeq
Let  $\xi_k(x)$ be  the characteristic function on $R_k$. Then we have
\begin{align}
\label{new-eq1}
\Big(\int_{\R^n} u^{\frac{\gamma p}{p-1}}\psi^{\frac{\alpha p}{p-1}}\Big)^{\frac{p-1}{p}}
&\le \Big(\int_{\R^n}\sum_{k=1}^\infty \xi_k(x)u^{\frac{\gamma p}{p-1}}\psi^{\frac{\alpha p}{p-1}}(x)\Big)^{\frac{p-1}{p}}
\le \sum_{k=1}^\infty \Big(\int_{\R^n}\xi_k(x) u^{\frac{\gamma p}{p-1}}\psi^{\frac{\alpha p}{p-1}}(x)\Big)^{\frac{p-1}{p}}\nonumber\\
&\le C_1^\alpha \sum_{k=1}^\infty \psi^{\alpha }(x_k) \big(\int_{R_k}u^{\frac{\gamma p}{p-1}} \big)^{\frac{p-1}{p}}.
\end{align}

Now we estimate  $\big(\int_{R_k}u^{\frac{\gamma p}{p-1}} \big)^{\frac{p-1}{p}}$.
To this end,
let
\begin{equation}
\label{theta-eq}
 \theta:=\frac{n\gamma-n p+n}{n\gamma -np+2p}.
\end{equation}
Note that $\gamma>n\geq p$. Thus it can be verified directly that
$$
\frac{p-1}{2p}=\theta\Big(\frac{1}{2}-\frac{1}{n}\Big)+(1-\theta)\frac{\gamma}{2p}
\quad {\rm and}\quad \frac{p-1}{p}<\theta<1.
$$
By  Gagliardo-Nirenberg inequality (see \cite[Theorem 1]{Nir}),  there is $C>0$ such that
\begin{align*}
\Big(\int_{R_k} u^{\frac{\gamma p}{p-1}} dx\Big)^{\frac{p-1}{p}}=\|u^{\frac{\gamma}{2}}\|_{L^{\frac{2p}{p-1}}(R_k)}^2\le C \Big(\int_{R_k}\big|\nabla ( u^{\gamma/2})|^2 dx\Big)^{\theta}\Big(\int_{R_k} u^p\Big)^{\frac{(1-\theta)\gamma }{p}}+C\Big(\int_{R_k}u^p\Big)^{\gamma/p}
\end{align*}
for all $k\ge 1$.
This together with  the assumption \eqref{prop2-eq0} implies that  there is $C_2>0$ such that
$$
\Big(\int_{R_k} u^{\frac{\gamma p}{p-1}} dx\Big)^{\frac{p-1}{p}} \le C_2 \Big(\int_{R_k}\big|\nabla ( u^{\gamma/2})|^2 dx\Big)^{\theta}+C_2.
$$

Taking
$\alpha:=\frac{1+\theta}{2}$, and then \eqref{new-eq1} and \eqref{3.26} yield
\begin{align}
\label{new-eq1-2}
&\Big(\int_{\R^n} u^{\frac{\gamma p}{p-1}}\psi^{\frac{\alpha p}{p-1}}\Big)^{\frac{p-1}{p}}\le C_1^{\alpha} \sum_{k=1}^\infty \psi^{\alpha }(x_k) \Big(\int_{R_k}u^{\frac{\gamma p}{p-1}} \Big)^{\frac{p-1}{p}}\nonumber\\
&\qquad\qquad\le C_1^{\alpha} C_2\sum_{k=1}^\infty \psi^{\frac{1-\theta}{2}}(x_k)\Big(\int_{R_k}|\nabla (u^{\gamma/2})|^2 \psi(x_k) dx\Big)^\theta+C_1^\alpha C_2\sum_{k=1}^\infty \psi^\alpha (x_k)\nonumber\\
&\qquad\qquad\le C_1^{\alpha+\theta}C_2\sum_{k=1}^\infty \psi^{\frac{1-\theta}{2}}(x_k)\Big(\int_{R_k}|\nabla (u^{\gamma/2})|^2 \psi(x) dx\Big)^\theta+C_1^\alpha C_2\sum_{k=1}^\infty \psi^\alpha (x_k).
\end{align}
Since $0<\theta<1$, by Young's inequality, for any $\eps>0$, there is $C_\eps>0$ such that
$$
C_1^{\alpha+\theta}C_2\psi^{\frac{1-\theta}{2}}(x_k)\Big(\int_{R_k}\big|\nabla ( u^{\gamma/2})|^2 \psi(x)dx\Big)^{\theta}\le \eps \int_{R_k}\big|\nabla ( u^{\gamma/2})|^2\psi(x) dx+C_\eps \psi^{\frac{1}{2}}(x_k).
$$
This, together  with \eqref{new-eq1-2}, implies that there is $C>0$ such that
\begin{align}
\label{new-eq1-4}
\Big(\int_{\R^n} u^{\frac{\gamma p}{p-1}}\psi^{\frac{\alpha p}{p-1}}\Big)^{\frac{p-1}{p}}&\le \eps   \sum_{k=1}^\infty \int_{R_k}|\nabla(u^{\gamma/2})|^2\psi(x)dx+C_\eps \sum_{k=1}^\infty\psi^{\frac{1}{2}}(x_k)+C_1^\alpha C_2\sum_{k=1}^\infty \psi^\alpha (x_k). \nonumber\\
&\le \eps \int_{\R^n}|\nabla(u^{\gamma/2})|^2\psi(x)dx +C_\eps+C
\end{align}
{where in the last inequality, we also used the definition of $x_k$ and $\psi$.}

By \eqref{new-eq0}, \eqref{3.25} and \eqref{new-eq1-4}, we have
\begin{align}
\label{new-eq0-2}
&\frac{1}{\gamma}\frac{d}{dt}\int_{\R^n}u^\gamma \psi
\le -\Big(\frac{\gamma-1}{2}-\frac{\k_1}{2}\Big)\int_{\R^n}u^{\gamma-2}\psi|\nabla u|^2\nonumber\\
&\qquad +\Big(\frac{\chi^2(\gamma-1)}{2}+\frac{\k_1|\chi|}{2}\Big)\Big(\eps \int_{\R^n}|\nabla(u^{\gamma/2})|^2\psi dx + (C_\eps+C)\Big)\Big(\int_{\R^n}|\nabla v|^{2p}\psi^{\beta p}\Big)^{\frac{1}{p}}\nonumber\\
&\qquad +\Big(a+\frac{k(|\chi|+1)}{2}\Big)\int_{\R^n} u^\gamma \psi- b\int_{\R^n}u^{\gamma+1}\psi.
\end{align}
Note that
$$
\int_{\R^n}u^{\gamma-2}|\nabla u|^2\psi=\frac{4}{\gamma^2}\int_{\R^n}|\nabla (u^{\gamma/2})|^2\psi.
$$
By Proposition \ref{basic-prop2} (3),
$$
\sup_{t\in[0,T_{\max}(u_0,v_0)),x_0\in\R^n}\int_{\R^n}|\nabla v|^{2p}\psi^{\beta p}<\infty.
$$
By H\"older's inequality, we have
$$
\int_{\R^n} u^\gamma \psi\le \Big(\int_{\R^n} u^{\gamma+1} \psi\Big)^{\frac{\gamma}{\gamma+1}}\Big(\int_{\R^n}\psi\Big)^{\frac{1}{\gamma+1}}.
$$
Hence for $C_3:=(\frac{\chi^2(\gamma-1)}{2}+\frac{\k_1|\chi|}{2})$  we have
\begin{align}
\label{new-eq0-3}
&\frac{1}{\gamma}\frac{d}{dt}\int_{\R^n}u^\gamma \psi
\le -{\Big(\frac{2(\gamma-1)}{\gamma^2}-\frac{2\k_1}{\gamma^2}-C_3\eps \Big)}\int_{\R^n}\psi|\nabla(u^{\frac{\gamma}{2}})|^2\nonumber\\
&\qquad \quad+
 \Big(a+\frac{k(|\chi|+1)}{2}\Big)\int_{\R^n} u^\gamma \psi- {b}{\Big(\int_{\R^n}\psi\Big)^{-\frac1\gamma}}\Big(\int_{\R^n} u^{\gamma}\psi\Big)^{\frac{\gamma+1}{\gamma}} +CC_3(C_\eps+1).
\end{align}
Choose $0<\k_1\ll 1$ and $0<\eps\ll 1$ such that
$\frac{2(\gamma-1)}{\gamma^2}-\frac{2\k_1}{\gamma^2}-C_3\eps >0$.
Then by \eqref{new-eq0-3} and the comparison principle for scalar ODEs, we have
$$
\sup_{t\in[0,T_{\max}(u_0,v_0)),x_0\in\R^n} \int_{\R^n}u^\gamma(t,x+x_0;u_0,v_0)\psi(x)<\infty,
$$
that is, \eqref{general-thm-proof-eq2} holds, and then \eqref{general-thm-proof-eq1} holds.
\end{proof}

The last proposition of the subsection says that if $u$ is locally bounded in $L^p$, then $|\nabla v|$ is uniformly bounded in $L^\infty$.

\begin{prop}
\label{basic-prop4}
  Let $(u(t,x;u_0,v_0),v(t,x;u_0,v_0))$ be the solution of \eqref{main-eq1} or \eqref{main-eq2},
and $(u(t,x;u_0)$, $v(t,x;u_0))$ be the solution of \eqref{main-eq3}.
Put
$$
(u(t,x),v(t,x))=(u(t,x;u_0,v_0),v(t,x;u_0,v_0))\quad {\rm or}\quad
(u(t,x),v(t,x))=(u(t,x;u_0),v(t,x;u_0))
$$
and
$$
T_{\max}=T_{\max}(u_0,v_0)\quad {\rm or}\quad T_{\max}(u_0).
$$
{If for some $p>n$ we have}
\begin{equation}
\label{prop3-eq0}
\sup_{t\in [0,T_{\max}), x_0\in\R^n}\|u\|_{L^p(B(x_0,1)}<\infty,
\end{equation}
then
\begin{equation}
\label{prop3-eq1}
\sup_{t\in [t_0,T_{\max})}\|\nabla v(t,\cdot)\|_{\infty}<\infty\quad\text{ for any $0<t_0<T_{\max}$.}
\end{equation}
\end{prop}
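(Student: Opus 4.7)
The plan is to estimate $|\nabla v(t, x_0)|$ uniformly in $x_0\in\R^n$ via the integral representation of $v$ in terms of $u$, combined with a near-versus-far decomposition centered at $x_0$. The crucial feature of the hypothesis $p>n$ is that the conjugate exponent $p'$ satisfies $p'<n/(n-1)$, so the singularity $|z|^{1-n}$ present in the gradient of the Bessel kernel (and of the heat kernel near $\sigma=0$) is locally integrable to the power $p'$; H\"older against the hypothesized uniform $L^p$-bound on unit balls is therefore finite. Set $K:=\sup_{t,x_0}\|u(t,\cdot)\|_{L^p(B(x_0,1))}$ throughout.

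For \eqref{main-eq3} the argument is the most direct. Solving the elliptic equation gives $v(t,\cdot)=\mu\,G_\lambda\ast u(t,\cdot)$, where $G_\lambda$ is the Bessel kernel of $-\Delta+\lambda$, satisfying $|\nabla G_\lambda(z)|\le C|z|^{1-n}$ near the origin and $|\nabla G_\lambda(z)|\le Ce^{-c|z|}$ at infinity. I would split
\[
\nabla v(t,x_0)=\mu\int_{B(x_0,1)}\nabla G_\lambda(x_0-y)u(t,y)\,dy+\mu\sum_{k\ge 1}\int_{A_k(x_0)}\nabla G_\lambda(x_0-y)u(t,y)\,dy,
\]
with annuli $A_k=B(x_0,k+1)\setminus B(x_0,k)$. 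The first term is bounded by $\|\nabla G_\lambda\|_{L^{p'}(B(0,1))}\|u(t,\cdot)\|_{L^p(B(x_0,1))}$, finite precisely because $p>n$. The $k$-th annular term is bounded via H\"older by $Ce^{-ck}k^{n-1}K$ (covering $A_k$ by $O(k^{n-1})$ unit balls), and these sum over $k$ to a constant multiple of $K$.

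For \eqref{main-eq1} and \eqref{main-eq2} I would work from the Duhamel formula $v(t,\cdot)=e^{(\Delta-\lambda I)t/\tau}v_0+\tau^{-1}\int_0^t e^{(\Delta-\lambda I)(t-s)/\tau}h(s,\cdot)\,ds$, with source $h=\mu u$ for \eqref{main-eq2} and $h=(\lambda-u)v$ for \eqref{main-eq1} (after rewriting $\tau v_t=\Delta v-\lambda v+(\lambda-u)v$). The free term has gradient bounded by $C(t_0/\tau)^{-1/2}e^{-\lambda t_0/\tau}\|v_0\|_\infty$ for $t\ge t_0$, using $\|v_0\|_\infty<\infty$ (by Morrey in the $L^q$ setting). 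For the Duhamel part I would establish the pointwise bound
\[
\|\nabla e^{(\Delta-\lambda I)\sigma}f\|_\infty\le C e^{-\lambda\sigma/2}\bigl(\sigma^{-\tfrac12-\tfrac{n}{2p}}+\sigma^{-\tfrac12}\bigr)\sup_{x_0}\|f\|_{L^p(B(x_0,1))}
\]
by the same near/far decomposition, using $|\nabla G_\sigma(z)|\le C\sigma^{-(n+1)/2}e^{-|z|^2/(8\sigma)}$. For \eqref{main-eq1}, the maximum principle gives $\|v(t,\cdot)\|_\infty\le\|v_0\|_\infty$, so $h$ satisfies the $L^p_{\rm uloc}$-hypothesis uniformly in $t$. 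Integration in $s\in(0,t)$ converges because $\tfrac12+\tfrac{n}{2p}<1$ iff $p>n$.

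The main obstacle is this pointwise semigroup estimate with the sharp power $\sigma^{-1/2-n/(2p)}$: it must simultaneously match $\|f\|_{L^p_{\rm uloc}}$ through H\"older on unit balls, exploit the Gaussian tail to beat the $|k|^{n-1}$ growth in annular unit-ball counts so the spatial sum converges, and remain integrable in $\sigma$ at zero so the time integral in Duhamel converges. All three conditions become marginal exactly at the threshold $p=n$, so the proof is sharp with respect to the hypothesis.
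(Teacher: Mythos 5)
Your argument is correct, but it takes a genuinely different route from the paper. The paper disposes of this proposition in two sentences by invoking classical interior regularity theory: Theorem~9.11 of Gilbarg--Trudinger for the elliptic system \eqref{main-eq3} (interior $W^{2,p}$ estimates with $p>n$, then Sobolev--Morrey to control $\nabla v$ pointwise on $B(x_0,\tfrac12)$, uniformly over $x_0$) and Theorem~11.1 of Ladyzhenskaya--Solonnikov--Ural'tseva for the parabolic systems \eqref{main-eq1}, \eqref{main-eq2}. You instead reconstruct the gradient bound from scratch via kernel representations: the Bessel resolvent for \eqref{main-eq3} and the Duhamel formula with heat-kernel gradient estimates for \eqref{main-eq1}--\eqref{main-eq2}, both times splitting $\R^n$ into the unit ball at $x_0$ (H\"older against the $L^p_{\rm uloc}$ hypothesis, finite since $p'<\tfrac{n}{n-1}$) and exponentially-weighted far annuli. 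Both proofs correctly exploit interior estimates, so neither needs a Neumann condition on $\partial B(x_0,1)$. What your route buys is a self-contained argument that exhibits exactly how the threshold $p>n$ is used three times simultaneously (local $L^{p'}$ integrability of the kernel gradient, summability of the annular tails against the $O(k^{n-1})$ unit-ball count, and integrability of the Duhamel time singularity $\sigma^{-1/2-n/(2p)}$ at $\sigma=0$); what the paper's citation approach buys is brevity. The one point you should record explicitly is that for \eqref{main-eq1} the rewriting $\tau v_t=\Delta v-\lambda v+(\lambda-u)v$ requires the maximum-principle bound $0\le v\le\|v_0\|_\infty$ (recorded in the paper as \eqref{proof-prop1-eq1-0}) so that the source $h=(\lambda-u)v$ inherits the uniform local $L^p$ bound, and that $\|v_0\|_\infty<\infty$ is available in the $L^q$ setting by Morrey, as the paper notes at the start of Section~3.1.
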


\begin{proof}  In the case that    $(u(t,x), v(t,x))$ is the solution of \eqref{main-eq1} or \eqref{main-eq2}, \eqref{prop3-eq1} follows from
 \cite[Theorem 11.1]{LaSoUr}.
In the case that $(u(t,x), v(t,x))$ is the solution of \eqref{main-eq3}, \eqref{prop3-eq1} follows from \cite[Theorem 9.11]{GiTr}.
\end{proof}

\subsection{Proof of Theorem \ref{general-global-existence-thm}}

In this subsection,  we prove  Theorem \ref{general-global-existence-thm}.
First of all, put
$$
(u(t,x),v(t,x))=(u(t,x;u_0,v_0),v(t,x;u_0,v_0))\quad {\rm or}\quad
(u(t,x),v(t,x))=(u(t,x;u_0),v(t,x;u_0))
$$
and
$$
T_{\max}=T_{\max}(u_0,v_0)\quad {\rm or}\quad T_{\max}=T_{\max}(u_0).
$$
By Propositions \ref{local-existence-prop1} and \ref{local-existence-prop2},
 $u(t,\cdot)\in C_{\rm unif}^b(\R^n)$ and $v(t,\cdot)\in C_{\rm unif}^{1,b}(\R^n)$ for any $t\in (0,T_{\max})$.
Without loss of generality, we assume that $u_0\in C_{\rm unif}^b(\R^n)$ and
$v_0\in C_{\rm unif}^{1,b}(\R^n)$.

Next, by \eqref{general-cond-eq00} and   Propositions \ref{basic-prop2} -- \ref{basic-prop4},
\begin{equation}
\label{proof-thm0-eq1}
\sup_{t\in [0,T_{\max})}\|\nabla v(t,\cdot)\|_\infty<\infty.
\end{equation}
Note that
\begin{align*}
u(t,\cdot)&={e^{(\Delta -I)t}u_0}-\chi\int_0^t e^{(\Delta-I)(t-s)}\nabla \cdot(u(s,\cdot)\nabla v(s,\cdot))ds\nonumber\\
&\quad +\int_0^t e^{(\Delta-I)(t-s)}u(s,\cdot)(1+a-b u(s,\cdot))ds
\end{align*}
for $t\in [0,T_{\max})$. Note that  there is $M>0$ such  that
$$
u(s,x)(1+a-bu(s,x))\le M\quad \forall \, s\in [0,T_{\max}),\,\, x\in\R^n.
$$
Hence
\begin{align*}
0\le u(t,\cdot)\le {e^{(\Delta -I)t}u_0}-{\chi\int_0^t e^{(\Delta-I)(t-s)}\nabla \cdot(u(s,\cdot)\nabla v(s,\cdot))ds}+\int_0^t e^{(\Delta-I)(t-s)} Mds
\end{align*}
and then
\begin{align}
\label{proof-thm0-eq2}
\| u(t,\cdot)\|_\infty & \le \underbrace{\|e^{(\Delta -I)t}u_0\|_\infty}_{I_0(t)}+\underbrace{|\chi| \int_0^t \|e^{(\Delta-I)(t-s)}\nabla \cdot(u(s,\cdot)\nabla v(s,\cdot))\|_\infty ds}_{I_1(t)}\nonumber\\
&\quad +\underbrace{\int_0^t\| e^{(\Delta-I)(t-s)} M\|_\infty ds}_{I_2(t)}
\end{align}

It is direct to see that
$$
I_0(t)\le \|u_0\|_\infty,\quad I_2(t)\le M\quad \forall\, t\in [0,T_{\max}).
$$
As for $I_1(t)$, by  Lemma \ref{L-infinity-bound}, we have
$$
{ I_1(t)\le\frac{n  |\chi|}{\sqrt \pi}}  \sup_{r\in [0,T_{\max})}\|\nabla v(r,\cdot)\|_\infty \int_0^t e^{-(t-s)}(t-s)^{-\frac{1}{2}}\|u(s,\cdot)\|_\infty ds\quad \forall\, t\in [0,T_{\max}).
$$
It then follows from \eqref{proof-thm0-eq2} and \eqref{proof-thm0-eq1} that
\begin{equation}
\label{proof-thm0-eq3}
\|u(t,\cdot)\|_\infty\le \|u_0\|_\infty+M+C \int_0^t e^{-(t-s)}(t-s)^{-\frac{1}{2}}\|u(s,\cdot)\|_\infty ds
\end{equation}
for all $t\in [0,T_{\max})$.
This together with generalized Gronwall's inequality implies that
$$
\|u(t,\cdot)\|_\infty \le C \Big(\|u_0\|_\infty+M\Big) \int_0^t e^{-(t-s)}(t-s)^{-\frac{1}{2}}ds\quad \forall\, t\in [0,T_{\max}),
$$
which yields $
\sup_{t\in [0,T_{\max})}\|u(t,\cdot)\|_\infty <\infty
$
and then $T_{\max}=\infty$.
The theorem is thus proved.

\section{Global existence of classical solutions of \eqref{main-eq1}}

In this section, we study  the global existence of classical solutions of \eqref{main-eq1} and prove Theorem \ref{main-thm1}.

\begin{proof}[Proof of Theorem \ref{main-thm1}]
First of all,  for given   $u_0\in C_{\rm unif}^b(\R^n)$ and $v_0\in C_{\rm unif}^{1,b}(\R^n)$ or
$u_0\in L^p(\R^n)$ and $v_0\in W^{1,p}(\R^n)$, by Proposition \ref{local-existence-prop1},  for any $t\in (0,T_{\max}(u_0,v_0))$,
$u(t,\cdot;u_0,v_0)\in C_{\rm unif}^b(\R^n)$ and $v(t,\cdot;u_0,v_0)\in C_{\rm unif}^{1,b}(\R^n)$. It then suffices to prove the theorem for the case that $u_0\in C_{\rm unif}^b(\R^n)$ and $v_0\in C_{\rm unif}^{1,b}(\R^n)$.

Next, assume that  $u_0\in C_{\rm unif}^b(\R^n)$ and $v_0\in C_{\rm unif}^{1,b}(\R^n)$.  By Theorem \ref{general-global-existence-thm}, it suffices to prove that there exists some $p>\max\{1,\frac{n}{2}\}$ such that
\begin{equation}
\label{proof-thm1-eq1}
\sup_{t\in [0,T_{\max}(u_0,v_0)), x_0\in\R^n}\int_{B(x_0,1)} u^p(t,x;u_0,v_0)dx<\infty
\end{equation}
provided that
 \begin{equation}
\label{thm1-cond-eq1}
 |\chi|\cdot \|v_0\|_\infty< b \cdot\sup_{\gamma>\max\{1,n/2\}}\frac{\gamma}{\gamma-1}\Big(C_{\gamma+1,n}\Big)^{-\frac{1}{\gamma+1}}
\end{equation}
 or
\begin{equation}
\label{thm1-cond-eq2}
|\chi|\cdot \|v_0\|_\infty< { D^*_{\tau,n}}.
\end{equation}
Let $\psi\in C^\infty(\R^n)$  satisfy \eqref{psi-eq00} for some $\k_1,\k_2>0$.
To prove \eqref{proof-thm1-eq1}, it then suffices to prove
\begin{equation}
\label{proof-thm1-eq1-1}
\sup_{t\in [0,T_{\max}(u_0,v_0)),x_0\in\R^n} \int_{\R^n}u^p(t,x+x_0;u_0,v_0) \psi(x)dx<\infty.
\end{equation}
 If no confusion occurs, we put
$$
u=u(t,x+x_0;u_0,v_0) \quad {\rm and}\,\, v=v(t,x+x_0;u_0,v_0).
$$

We prove \eqref{proof-thm1-eq1-1}   in two steps.

\medskip

\noindent {\bf Step 1.}  {\it In this step, we  assume that
$|\chi|\cdot \|v_0\|<  b \cdot\sup_{\gamma>\max\{1,\frac{n}2\}}\frac{\gamma}{\gamma-1}\Big(C_{{ \gamma+1,n}}\Big)^{-\frac{1}{\gamma+1}}$, which is equivalent to  $b>\Big(\inf_{\gamma >\max\{1,\frac{n}{2}\}} \frac{\gamma-1}{\gamma}\big(C_{{\gamma+1,n}}\big)^{\frac{1}{\gamma+1}}\Big)|\chi|\|v_0\|_\infty$,
 and prove \eqref{proof-thm1-eq1-1}.}

\medskip

First, by the equation and Young's inequality, for any $p>1$ we have
\begin{align*}
\frac{1}{p} &\frac{d}{dt}\int_{\R^n}u^p\psi
=-(p-1)\int_{\R^n}u^{p-2}|\nabla
u|^2\psi-\int_{\R^n}u^{p-1}\nabla u \cdot \nabla\psi\\
&\quad  +\chi(p-1)\int_{\R^n}u^{p-1}\nabla u \cdot (\nabla
v )\psi +  \chi\int_{\R^n}u^{p}\nabla v \cdot \nabla
\psi +a\int_{\R^n}u^p\psi-b\int_{\R^n}u^{p+1}\psi\\
&\le -(p-1)\int_{\R^n}u^{p-2}|\nabla
u|^2\psi + \frac{\k_1}{2} \int_{\R^n}u^{p-2}|\nabla u|^2\psi + \frac{\k_1}{2} \int_{\R^n}u^p\psi\\
&\quad +\frac{\chi(p-1)}{p}\int_{\R^n} \nabla (u^p)\cdot(\nabla v)\psi +\frac{|\chi| \k_1 p}{p+1}\int_{\R^n} u^{p+1}\psi+\frac{|\chi| \k_1 }{p+1}\int_{\R^n} |\nabla v|^{p+1}\psi\\
&\quad +a\int_{\R^n} u^p\psi-b\int_{\R^n} u^{p+1}\psi\\
&=-\big(p-1-\frac{\k_1}{2})\int_{\R^n}u^{p-2}|\nabla u|^2\psi  -\frac{\chi (p-1)}{p}\int_{\R^n} u^p(\Delta v) \psi-\frac{\chi(p-1)}{p}\int_{\R^n} u^p\nabla v\cdot\nabla \psi\\
&\quad +\frac{|\chi|\k_1}{p+1}\int_{\R^n}|\nabla v|^{p+1}\psi+\big(a+\frac{\k_1}{2}\big)\int_{\R^n} u^p\psi-\big(b-\frac{|\chi|\k_1 p}{p+1}\big)\int_{\R^n} u^{p+1}\psi\\
&\le -\big(p-1-\frac{\k_1}{2})\int_{\R^n}u^{p-2}|\nabla u|^2\psi  +\frac{|\chi| (p-1)}{p}\int_{\R^n}u^p|\Delta v| \psi-\frac{p+1}{\tau p}\int_{\R^n}u^p\psi\\
&\quad +\Big(\frac{|\chi|\k_1}{p+1}+\frac{|\chi|\k_1 (p-1)}{p (p+1)}\Big)\int_{\R^n}|\nabla v|^{p+1}\psi+\big(a+\frac{\k_1}{2}+\frac{p+1}{\tau p}\big)\int_{\R^n} u^p\psi\\
&\quad -\Big(b-\frac{|\chi|\k_1 p}{p+1}-\frac{|\chi|\k_1(p-1)}{p+1}\Big)\int_{\R^n} u^{p+1}\psi.
\end{align*}

 Let $r>0$ { be determined later}. By Young's inequality again, we have
  \[\frac{|\chi|(p-1)}{p}\int_{\R^n}u^p |\Delta v|\psi \leq r \int_{\R^n}u^{p+1}\psi +\underbrace{\frac{1}{p}\big(\frac{p-1}{p+1}\big)^{p+1} }_{A_p} r^{-p}|\chi|^{p+1}\int_{\R^n}|\Delta v|^{p+1}\psi\]
and for any $\eps>0$,
 \begin{align*}
 \int_{\R^n}\Big[a+\frac{\k_1}{2}+\frac{p+1}{\tau p}\Big]u^p\psi \leq \eps\int_{\R^n}u^{p+1}\psi +\underbrace{\frac{1}{p+1}\Big[\frac{p+1}{p}\eps\Big]^{-p}\Big[a+\frac{\k_1}{2}+\frac{p+1}{\tau p}\Big]^{p+1}\int_{\R^n}\psi}_{C_{\eps}}.
 \end{align*}
We then have
\begin{align*}
\frac{1}{p}\frac{d}{dt}\int_{\R^n}u^p\psi+\frac{p+1}{\tau p}\int_{\R^n}u^p\psi&\le   -\big(p-1-\frac{\k_1}{2})\int_{\R^n}u^{p-2}|\nabla u|^2\psi  +A_p r^{-p}|\chi|^{p+1}\int_{\R^n}|\Delta v|^{p+1} \psi \\
&\quad +\Big(\frac{|\chi|\k_1}{p+1}+\frac{|\chi|\k_1 (p-1)}{p (p+1)}\Big)\int_{\R^n}|\nabla v|^{p+1}\psi\\
&\quad -\big(b-\frac{|\chi|\k_1 p}{p+1}-\frac{|\chi|\k_1(p-1)}{p+1}-r-\eps\big)\int_{\R^n} u^{p+1}\psi+C_{\eps}.
\end{align*}
This implies that for any $0<t_0<t<T_{\max}$,
\begin{equation}\label{f-s-eq0}
\int_{\R^n}u^p(t,\cdot)\psi dx\le  e^{\frac{(p+1) (t_0-t)}{\tau}}\int_{\R^n}u^p(t_0,\cdot)\psi dx+p\int_{t_0}^t
e^{\frac{(p+1) (s-t)}{\tau}} f(s)ds,
\end{equation}
where
\begin{align}
\label{f-s-eq1}
f(s)&:=A_p r^{-p}|\chi|^{p+1}\int_{\R^n}|\Delta v(s,\cdot)|^{p+1} \psi dx+\Big(\frac{|\chi|\k_1}{p+1}+\frac{|\chi|\k_1 (p-1)}{p (p+1)}\Big)\int_{\R^n}|\nabla v(s,\cdot)|^{p+1}\psi dx\nonumber  \\
&\quad -\Big(b-\frac{|\chi|\k_1 p}{p+1}-\frac{|\chi|\k_1(p-1)}{p+1}-r-\eps \Big)\int_{\R^n} u^{p+1}(s,\cdot)\psi dx+C_{\eps}.
\end{align}

 Let $\psi_1 := \psi^{\frac{1}{p+1}}$. Then  $v\psi_1$ solves
$$
\tau (v\psi_1)_t =\Delta(v\psi_1)-v\psi_1+\left(v\psi_1 -2\nabla v\cdot \nabla\psi_1 + v\Delta \psi_1 - uv \psi_1\right).
$$
By Lemma \ref{maximal-regularity-lm} with $\gamma:=p+1$ and $\alpha:=1$, we have
\begin{equation}
\begin{aligned}
\label{f-s-eq2}
&\int_{t_0}^t e^{\frac{(p+1) s}{\tau}} \int_{\R^n} (v(s,x)\psi_1(x))^{p+1}+|\nabla (v(s,x)\psi_1(x))|^{p+1}+|\Delta (v(s,x) \psi_1(x))|^{p+1} dx ds\\
&\qquad\le C_{{p+1,n}}\int_{t_0}^t  \int_{\R^n} e^{\frac{(p+1)s}{\tau}} \Big[ (v\psi_1 -2\nabla v\cdot \nabla\psi_1 + v\Delta \psi_1 - uv\psi_1)(s,x)\Big]^{p+1}dx ds\\
&\qquad+ C_{{ p+1,n}}(t+\tau^{p+1}t_0^{-p})\|v_0(\cdot)
\psi_1\|^{p+1}_{L^{p+1}(\R^n)} 
\end{aligned}
\end{equation}
Note that
\begin{align*}
\int_{\R^n}(|\nabla v|\psi_1)^{p+1}\le \int_{\R^n}\big(|\nabla(v\psi_1)|+v|\nabla \psi_1|\big)^{p+1}\le \int_{\R^n} \big(|\nabla (v\psi_1)|+\frac{\k_1}{p+1} v\psi_1\big)^{p+1}
\end{align*}
and
\begin{align*}
\int_{\R^n}|\Delta v\psi_1|^{p+1}&\le \int_{\R^n}\Big(|\Delta(v\psi_1)|+2|\nabla v||\nabla \psi_1|+v|\Delta \psi_1|\Big)^{p+1}\\
&\le \int_{\R^n}\Big[|\Delta (v\psi_1)|+\frac{2\k_1}{p+1}|\nabla v|\psi_1+\big(\frac{p \k_1^2}{(p+1)^2}+\frac{\k_1}{p+1}\big)v\psi_1\Big]^{p+1}.
\end{align*}
Then for any $\eps>0$, by choosing $0<\k_1\ll 1$, we have
\beq\lb{4.8}
\begin{aligned}
    \int_{\R^n}(|\nabla v|\psi_1)^{p+1}&\le (1+\eps)\int_{\R^n}|\nabla(v\psi_1)|^{p+1}+\frac{\eps}{2}\int_{\R^n}(v\psi_1)^{p+1},\\
\int_{\R^n}(|\Delta v|\psi_1)^{p+1}&\le (1+\eps)\int_{\R^n}|\Delta(v\psi_1)|^{p+1}
+\eps\int_{\R^n}(|\nabla v|\psi_1)^{p+1}+\frac{\eps}{2}\int_{\R^n}(v\psi_1)^{p+1},
\end{aligned}
\eeq
and
\beq\lb{4.9}
\begin{aligned}
&\int_{\R^n} \left( (v\psi_1 -2\nabla v\cdot\nabla\psi_1+ v\Delta \psi_1 - uv \psi_1)(s,x)\right)^{p+1}dx\\
&\qquad\quad\le (1+\eps)\int_{\R^n}(v\psi_1)^{p+1}+
\eps \int_{\R^n}(|\nabla v|\psi_1)^{p+1}+(\|v_0\|^{p+1}_\infty+\eps)\int_{\R^n}(u\psi_1)^{p+1}.
\end{aligned}
\eeq
Using \eqref{4.8} yields
\begin{align*}
\int_{t_0}^t& \int_{\R^n}e^{\frac{(p+1) s}{\tau}}\big( (v\psi_1)^{p+1}
+(|\nabla v|\psi_1)^{p+1}+(|\Delta v|\psi_1)^{p+1}\big)(s,x)dx ds\nonumber\\
&\le (1+\eps)^2\int_{t_0}^t \int_{\R^n} e^{\frac{(p+1) s}{\tau}}\big((v\psi_1)^{p+1}+|\nabla (v\psi_1)|^{p+1}+|\Delta (v \psi_1)|^{p+1} \big)(s,x)dx ds.
\end{align*}
By  \eqref{f-s-eq2} and \eqref{4.9}, the above
\begin{align*}
&\le (1+\eps)^2C_{{ p+1,n}}\int_{t_0}^t  \int_{\R^n} e^{\frac{(p+1) s}{\tau}}\Big[ (v\psi_1-2\nabla v\cdot\nabla\psi_1 + v\Delta \psi_1 - uv\psi_1)(s,x)\Big]^{p+1}dx ds\nonumber\\
&\qquad + (1+\eps)^2C_{{ p+1,n}}(t+\tau^{p+1}t_0^{-p})\|v_0(\cdot)
\psi_1\|^{p+1}_{L^{p+1}(\R^n)}\\
&\le (1+\eps)^2 C_{{ p+1,n}}\int_{t_0}^t \int_{\R^n}e^{\frac{(p+1) s}{\tau}}\left((1+\eps)(v\psi_1)^{p+1}+\eps(|\nabla v|\psi_1)^{p+1}+(\|v_0\|_\infty^{p+1}+\eps)(u\psi_1)^{p+1}\right)\\
&\qquad + (1+\eps)^2C_{{ p+1,n}}(t+\tau^{p+1}t_0^{-p})\|v_0(\cdot)
\psi_1\|^{p+1}_{L^{p+1}(\R^n)}.
\end{align*}
Recall that $\psi_1^{p+1}=\psi$. This implies that
\begin{equation*}
\begin{aligned}
&\int_{t_0}^t\int_{\R^n}e^{\frac{(p+1) s}{\tau}}\big(|\nabla v|^{p+1}\psi+|\Delta v|^{p+1}\psi \big)dx ds\\
&\quad \le  (1+\eps)^2 C_{{ p+1,n}}\int_{t_0}^t   \int_{\R^n}  e^{\frac{(p+1) s}{\tau}}\Big(\eps |\nabla v|^{p+1}\psi+(\|v_0\|_\infty^{p+1}+\eps)u^{p+1}\psi\Big)dxds+C^*_{1}(t),
\end{aligned}
\end{equation*}
where
\begin{align}
    \label{C*1}
C^*_{1}(t):=&(1+\eps)^3 C_{{p+1,n}}\|v_0\|_\infty^{p+1} \int_{t_0}^t \int_{\R^n}e^{\frac{(p+1) s}{\tau}} \psi dxds \nonumber\\
&+(1+\eps)^2 C_{{p+1,n}}(t+\tau^{p+1}t_0^{-p})\|v_0(\cdot)
\psi_1\|^{p+1}_{L^{p+1}(\R^n)}.
\end{align}
Now by taking $\eps$ to be small such that $(1+\eps )^2\eps C_{{ p+1,n}}\leq \frac12$, we obtain from this
\begin{equation}\label{4.1}
\int_{t_0}^t\int_{\R^n}e^{\frac{(p+1) s}{\tau}}|\nabla v|^{p+1}\psi dx ds \le 2C^*_{2}\int_{t_0}^t   \int_{\R^n} e^{\frac{(p+1) s}{\tau}} u^{p+1}\psi dxds +2C^*_{1}(t)
\end{equation}
where
\[
C^*_{2}:={(1+\eps)^2 C_{{ p+1,n}}(\|v_0\|_\infty^{p+1}+\eps)}.
\]
We also get
\begin{equation}\label{f-s-eq3'}
 \begin{aligned}
\int_{t_0}^t\int_{\R^n}e^{\frac{(p+1) s}{\tau}}|\Delta v|^{p+1}\psi dx ds
\le  C^*_{2}\int_{t_0}^t   \int_{\R^n}e^{\frac{(p+1) s}{\tau}} u^{p+1}\psi dxds +C^*_{1}(t).
\end{aligned}
\end{equation}

Choose $0<\k_1\ll 1$ such that
$$
\frac{|\chi|\k_1}{p+1}+\frac{|\chi|\k_1 (p-1)}{p (p+1)}< \eps
\quad {\rm and}\quad
\frac{|\chi|\k_1 p}{p+1}+\frac{|\chi|\k_1(p-1)}{p+1}<\eps.
$$
Then it follows from \eqref{f-s-eq0}, \eqref{f-s-eq1}, and \eqref{f-s-eq3'} that
\begin{equation}
    \begin{aligned}
\label{f-s-eq4}
&\quad \int_{\R^n}u^p(t,x)\psi dx\\
&\le  e^{\frac{(p+1)(t_0-t)}{\tau}}\int_{\R^n}u^p(t_0,x)\psi dx+pA_p  r^{-p}|\chi|^{p+1}\int_{t_0}^t\int_{\R^n}e^{\frac{(p+1)(s-t)}{\tau}}|\Delta v|^{p+1} \psi dxds \\
&\quad+p\eps\int_{t_0}^t\int_{\R^n}e^{\frac{(p+1)(s-t)}{\tau}}|\nabla v|^{p+1}\psi dxds   -p\big(b-r-2\eps \big)\int_{t_0}^t\int_{\R^n}e^{\frac{(p+1)(s-t)}{\tau}}u^{p+1}\psi dxds\\
&\quad +C_{\eps} p\int_{t_0}^t  e^{\frac{(p+1)(s-t)}{\tau}}ds\\
&\le  \int_{\R^n}u^p(t_0,\cdot)\psi dx+  \left(pA_p r^{-p}|\chi|^{p+1}+2p\eps\right)\left[C^*_{2}\int_{t_0}^t   \int_{\R^n}e^{\frac{(p+1)(s-t )}\tau} u^{p+1}\psi dxds +e^{-\frac{(p+1) t}\tau}C^*_{1}(t)\right]\\
&\quad -p\big(b-r-2\eps\big)\int_{t_0}^t\int_{\R^n} e^{\frac{(p+1)(s-t )}\tau}u^{p+1}\psi dxds+{ \frac{C_{\eps}p\tau}{p+1}}\\
&=-p\Big(b-r-4\eps  -A_pr^{-p}|\chi|^{p+1}C^*_{2}\Big)\int_{t_0}^t \int_{\R^n}e^{(p+1)(s-t )}u^{p+1}\psi dxds+C^*_{3}(t),
\end{aligned}
\end{equation}
where
\begin{align*}
C^*_{3}(t):= \frac{C_{\eps}p\tau}{p+1}+ \left(pA_pr^{-p}|\chi|^{p+1}+2p\eps\right)e^{-\frac{(p+1)t}\tau}C^*_{1}(t)+\left\|u(t_0,\cdot)\psi^{1/p}\right\|_{L^{p}}^{p}.
\end{align*}
Direct computation yields $C^*_{1}(t)\leq Ce^{\frac{(p+1)t}\tau}+C(t+\tau^{p+1}t_0^{-p})$ by \eqref{C*1}  for some $C>0$. It is then clear that $C^*_{3}(t)$ is uniformly bounded for all $t\geq t_0$, depending on $r,t_0,p,\tau,\chi,\eps,u(t_0)$ and $\|v_0\|_\infty$ of course.

Now note that $\left|C^*_{2}- C_{{ p+1,n}}\|v_0\|_\infty^{p+1}\right|=O(\eps)$ and
$$
\min_{r>0} \left(A_p  r^{-p}|\chi|^{p+1}C_{{ p+1,n}}\|v_0\|_\infty^{p+1}+r\right)=\Big(1+\frac1p\Big)A_p^\frac{1}{p+1}\big(p\,C_{{ p+1, n}}\big)^{\frac{1}{p+1}}|\chi|\, \|v_0\|_\infty.
$$
So, in view of the condition $b>\Big[\inf_{p >\max\{1,\frac{n}{2}\}}  { \left(1+\frac1p\right)} A_p^\frac{1}{p+1}\big(p\, C_{{ p+1, n}}\big)^{\frac{1}{p+1}}\Big]|\chi|\|v_0\|_\infty$, there are $r>0$ and
$p>\max\{1,\frac{n}{2}\}$ such that
$$
b>r+A_pr^{-p}|\chi|^{p+1}  C_{{ p+1, n}}\|v_0\|_\infty^{p+1}.
$$
Fix such $r>0$ and $p>\max\{1,\frac{n}{2}\}$. Then by selecting $\eps\ll 1$, we obtain
$$
b-r-4\eps -A_pr^{-p}|\chi|^{p+1}C^*_{2}>0.
$$
Then by \eqref{f-s-eq4},
$$
\sup_{t\in [t_0,T_{\max}),x_0\in\infty}\int_{\R^n}u^p(t,x+x_0)\psi(x)<\infty.
$$
This implies that  \eqref{proof-thm1-eq1-1} holds provided that \eqref{thm1-cond-eq1}
holds.

\medskip

\noindent {\bf Step 2.} {\it In this step, we  assume that
$|\chi|\cdot  \|v_0\|_\infty< { D^*_{\tau,n}}$ and prove \eqref{proof-thm1-eq1-1}.}
\smallskip

To this end,
define $\varphi(s) := e^{\sigma s^2}$ for $0\leq s \leq \|v_0\|_{\infty}$,  where
 $\sigma>0$ is to be determined later.
Recall $n^*=\max\{1,\frac{n}{2}\}$,
$\tau^*=\frac{1}{\tau}-1$, and $ j=\text{Sign}(\chi\tau^*)$.
Due to the condition, we can choose a {$p>\max\{1,\frac{n}{2}\}$} such that
\begin{equation}
\label{new-d-tau-n-eq}
|\chi|\cdot \|v_0\|_\infty<\begin{cases}  \frac{2}{\tau p (2\alpha+j|\tau^*|)}\quad &{\rm if}\quad \alpha>-j|\tau^*|\cr\cr
\frac{2}{ \tau p\alpha}\quad &{\rm if}\quad \alpha\leq -j|\tau^*|
\end{cases}
\end{equation}
where $\alpha:=\sqrt{\frac{(\tau^*)^2}4+\frac1{\tau p}}.$
Also let $\psi\in C^\infty(\R^n)$  satisfy \eqref{psi-eq00} for some $\k_1,\k_2>0$.
To prove \eqref{proof-thm1-eq1-1}, it then suffices to prove
\begin{equation}
\label{proof-thm1-eq2}
\sup_{t\in [0,T_{\max}(u_0,v_0)),x_0\in\R^n} \int_{\R^n}u^p(t,x+x_0;u_0,v_0) \varphi(v(t,x+x_0;u_0,v_0)) \psi(x)dx<\infty.
\end{equation}

We now prove \eqref{proof-thm1-eq2}.  If no confusion occurs,  for given $x_0\in\R^n$, we put
$$
u=u(t,x+x_0;u_0,v_0) \quad {\rm and}\,\, v=v(t,x+x_0;u_0,v_0).
$$
Observe that
 \[
\begin{aligned}
&\quad\,\,\frac{1}{p}\frac{d}{dt}\int_{\R^n}u^{p}\varphi(v)\psi(x)\nonumber\\
 &= \int_{\R^n} u_tu^{p-1}\varphi(v) \psi(x)+ \frac{1}{p}\int_{\R^n}u^p\varphi^{\prime}(v)\psi(x)v_t \nonumber\\
& = \int_{\R^n}(\Delta u - \chi\nabla \cdot (u\nabla v) + au - bu^2) u^{p-1}\varphi(v) \psi(x)+\frac{1}{{\tau}p}\int_{\R^n}u^p\varphi^{\prime}(v)(\Delta v - uv ) \psi(x)\nonumber \\
&=  -(p-1) \int_{\R^n}u^{p-2}|\nabla u|^2\varphi(v)\psi(x) - \int_{\R^n}\varphi^{\prime}(v) u^{p-1}\nabla u \cdot(\nabla v) \psi(x)-\int_{\R^n}u^{p-1}\nabla u\cdot(\nabla \psi) \varphi(v)\nonumber\\
&\quad + \chi(p-1)\int_{\R^n} u^{p-1}\varphi(v)\nabla u\cdot (\nabla v)  \psi(x)
 + \chi\int_{\R^n}u^p|\nabla v|^2\varphi^{\prime}(v)\psi(x)+
\chi\int_{R^n} u^p \varphi(v)\nabla v\cdot\nabla \psi\nonumber\\
&\quad  + \int_{\R^n}(au^{p} -bu^{p+1})\varphi(v)\psi(x) - \frac{1}{{\tau}p}\int_{\R^n}u^p\varphi^{\prime\prime}(v)|\nabla v|^2 \psi(x)-{\frac{1}{\tau}} \int_{\R^n}u^{p-1}\varphi^{\prime}(v)\nabla u\cdot (\nabla v)\psi(x) \nonumber\\
&\quad -\frac{1}{{\tau}p}\int_{\R^n} u^p\varphi'(v)\nabla v\cdot \nabla \psi - \frac{1}{{\tau}p}\int_{\R^n}u^{p+1}v\varphi^{\prime}(v)\psi(x).
\end{aligned}
\]
Since $v\geq 0$, $\varphi^{\prime}(s)=2\sigma s\varphi(s) \geq 0$ for all $s\geq 0$, and $\psi(x) >0$, we have
\begin{align*}
    &\quad\,\,\frac{1}{p}\frac{d}{dt}\int_{\R^n}u^{p}\varphi(v) \psi\\
   &\leq   -(p-1) \int_{\R^n}u^{p-2}|\nabla u|^2\varphi(v) \psi- \int_{\R^n}({ 2\sigma v+\frac{2\sigma}{\tau}v} -\chi(p-1))\varphi(v) u^{p-1}\nabla u \cdot (\nabla v)\psi\nonumber  \\
&\quad+ \chi\int_{\R^n}u^p|\nabla v|^2\varphi^{\prime}(v)\psi + \int_{\R^n}(au^{p} -bu^{p+1})\varphi(v)\psi - \frac{1}{{\tau}p}\int_{\R^n}u^p\varphi^{\prime\prime}(v)|\nabla v|^2\psi \nonumber\\
&\quad  -\int_{\R^n} u^{p-1}\varphi(v)\nabla u\cdot\nabla \psi +\int_{\R^n} (\chi - \frac{2\sigma v}{{\tau p}})u^p\varphi(v)\nabla v\cdot\nabla \psi.
\end{align*}
By Young's inequality, we have for any  $\eps,\eps'\in(0,1)$,
\begin{align*}
&-\int_{\R^n}({ 2\sigma v+\frac{2\sigma}{\tau}v} -\chi(p-1))\varphi(v) u^{p-1}\nabla u \cdot (\nabla v)\psi \leq
(1-\eps)(p-1)\int_{\R^n}u^{p-2}\varphi(v)|\nabla u|^2 \psi \\
&\qquad\qquad\qquad\qquad+\int_{\R^n}\frac{({ 2\sigma v+\frac{2\sigma}{\tau}v}  -\chi(p-1))^2}{4(p-1)(1-\eps)}u^p\varphi(v)|\nabla v|^2\psi ,
\end{align*}
$$
-\int_{\R^n}u^{p-1}\varphi(v)\nabla u\cdot\nabla \psi\le \eps (p-1)\int_{\R^n} u^{p-2}|\nabla u|^2\varphi(v)\psi +\frac{\k_1^2}{4\eps (p-1)}\int_{\R^n}u^p  \varphi(v)\psi,
$$
and
$$
\int_{\R^n}(\chi - \frac{2\sigma v}{{\tau}p})u^p\varphi(v)\nabla v\cdot\nabla \psi\le \eps' \int_{\R^n} u^p |\nabla v|^2 \varphi(v)\psi+{\frac{\k_1^2}{4\eps'} \int_{\R^n}}\Big(\chi - \frac{2\sigma v}{{\tau}p}\Big)^2u^p  \varphi(v)\psi,
$$
Then we have
\begin{align}
\label{proof-thm1-eq4}
    &\quad\,\,\frac{1}{p}\frac{d}{dt}\int_{\R^n}u^{p}\varphi(v) \psi\nonumber\\
   &\leq \int_{\R^n}\Big[\frac{({ 2\sigma v+\frac{2\sigma}{\tau}v} -\chi(p-1))^2}{4(p-1)(1-\eps)} + \eps' \Big]u^p\varphi(v)|\nabla v|^2\psi \nonumber  \\
&\quad + \chi\int_{\R^n}u^p|\nabla v|^2\varphi^{\prime}(v) \psi - \frac{1}{{\tau}p}\int_{\R^n}u^p\varphi^{\prime\prime}(v)|\nabla v|^2  \psi + \int_{\R^n}(au^{p} -bu^{p+1})\varphi(v)\psi\nonumber\\
&\quad +\Big[ \frac{\k_1^2}{4\eps (p-1)}+ \Big(\chi - \frac{2\sigma v}{{\tau}p}\Big)^2\frac{\k_1^2}{4\eps'}\Big]\int_{\R^n}u^p  \varphi(v)\psi\nonumber\\
&= \int_{\R^n}\Big[\frac{({ 2\sigma v+\frac{2\sigma}{\tau}v}-\chi(p-1))^2}{4{(p-1)}(1-\eps)} + \eps' +2\sigma\chi v - \frac{1}{{\tau}p}(2\sigma +4\sigma^2 v^2)\Big] u^p\varphi(v)|\nabla v|^2  \psi \nonumber\\
&\quad + \int_{\R^n}(au^{p} -bu^{p+1})\varphi(v)\psi +\int_{\R^n}\Big[ \frac{\k_1^2}{4\eps (p-1)}+ \Big(\chi - \frac{2\sigma v}{{\tau}p}\Big)^2\frac{\k_1^2}{4\eps'}\Big]u^p  \varphi(v)\psi.
\end{align}

We claim that, if \eqref{new-d-tau-n-eq} holds, then with $\sigma$ defined in \eqref{sigma} and \eqref{sigma1} below we have for all $v\in [0,\|v_0\|_\infty]$,
\beq\lb{4.17}
\frac{({2\sigma v+\frac{2\sigma}{\tau}v}-\chi(p-1))^2}{4{(p-1)}} +2\sigma\chi v - \frac{1}{{\tau}p}(2\sigma +4\sigma^2 v^2)<0.
\eeq
Indeed, \eqref{4.17} is equivalent to
\beq\lb{4.18}
A\sigma^2 v^2+B< C\sigma+D\sigma v
\eeq
where
\[
A:=\frac{(\tau^*)^2}{p-1}+\frac{4}{\tau p(p-1)}>0,\quad B:=\frac{\chi^2(p-1)}{4}>0,\quad C:=\frac{2}{\tau p}>0,\quad D:=-\chi \tau^*.
\]

First, assume $D< \sqrt{AB} $ and in this case we take
\beq\lb{sigma}
\sigma:={\sqrt{B}(2\sqrt{AB}-D)}/{(C\sqrt{A})}.
\eeq
In order to have \eqref{4.18} valid, since it is a convex quadratic form of $v$, it suffices to have \eqref{4.18} with $v=\|v_0\|_\infty$ and $v=0$. When $v=0$, it suffices to have
\[
B\leq {\sqrt{B}(2\sqrt{AB}-D)}/{\sqrt{A}}
\]
which is certainly true as $D\leq \sqrt{AB}$.
When $v=\|v_0\|_\infty$,  \eqref{4.18} becomes
\beq\lb{4.20}
\frac{\sqrt{AB}(2\sqrt{AB}-D)}{C}\|v_0\|_\infty^2-D\|v_0\|_\infty+\frac{C(-\sqrt{AB}+D)}{2\sqrt{AB}-D}<0.
\eeq
Note that $AB=|\chi|^2\alpha^2$ and $D=-j|\chi||\tau^*|$ with $j=\text{Sign}(\chi\tau^*)$.
Thus \eqref{4.20} reduces to
\beq\lb{4.25}
|\chi|\|v_0\|_\infty<\frac{1}{\tau p}\left(\frac{-j|\tau^*|+\sqrt{|\tau^*|^2+4\alpha^2+4j\alpha|\tau^*|}}{2\alpha^2+\alpha j|\tau^*|}\right)=\frac{2}{\tau p(2\alpha+j|\tau^*|)}.
\eeq

Next, assume that $D\geq \sqrt{AB}$, take $\eps''>0$ and
\beq\lb{sigma1}
\sigma:=B/C+\eps''> {\sqrt{B}(2\sqrt{AB}-D)}/{(C\sqrt{A})}.
\eeq
Therefore \eqref{4.18} holds when $v=0$. When $v=\|v_0\|_\infty$, since $D\geq \sqrt{AB}$, after taking $\eps''$ to be sufficiently small,
\[
A\sigma^2\|v_0\|_\infty^2+B<D\sigma\|v_0\|_\infty+C\sigma \Longleftarrow \|v_0\|_\infty <\frac{C}{\sqrt{AB}}\Longleftrightarrow |\chi|\|v_0\|_\infty<\frac{2}{\tau p \alpha}.
\]
This and \eqref{4.25}  reduce to \eqref{new-d-tau-n-eq}. Overall, we proved the claim \eqref{4.17}.


\medskip

Now by choosing $0<\eps,\eps'\ll 1$ and  \eqref{proof-thm1-eq4}, we then have
\beq\label{proof-thm1-eq5}
\begin{aligned}
    \frac{1}{p}\frac{d}{dt}\int_{\R^n}u^{p}\varphi(v) \psi&\leq \int_{\R^n}(au^{p} -bu^{p+1})\varphi(v)\psi +\int_{\R^n}\Big( \frac{\k_1^2}{4\eps (p-1)}+ \Big(\chi - \frac{2\sigma v}{{\tau }p}\Big)^2\frac{\k_1^2}{4\eps'}\Big)u^p  \varphi(v)\psi\\
&\le M\int_{\R^n}u^{p}\varphi(v)\psi -b\int_{\R^n}u^{p+1}  \varphi(v)\psi
\end{aligned}
\eeq
where $M=\max_{0\le v\le \|v_0\|_\infty} \Big(a +  \frac{\k_1^2}{4\eps (p-1)}+\Big(\chi - \frac{2\sigma v}{{\tau }p}\Big)^2\frac{\k_1^2}{4\eps'}\Big)$. Note hat
\begin{align*}
\int_{\R^n} u^p \varphi(v) \psi=\int_{\R^n} u^p \varphi^{\frac{p}{p+1}}\psi^{\frac{p}{p+1}} \cdot {\varphi}^{\frac{1}{p+1}}\psi^{\frac{1}{p+1}}
\le \Big(\int_{\R^n} u^{p+1}\varphi(v)\psi\Big)^{\frac{p}{p+1}}\Big(\int_{\R^n}\varphi(v)\psi\Big)^{\frac{1}{p+1}}.
\end{align*}
This and $\varphi(v)\leq e^{\sigma \|v_0\|_\infty^2}$  with $\sigma$ defined in \eqref{sigma} and \eqref{sigma1} imply that
$$
-b\int_{\R^n}u^{p+1}\varphi(v)\psi\le -{b}\Big(\int_{\R^n}\varphi(v)\psi\Big)^{-\frac{1}{p}}\Big(\int_{\R^n} u^p\varphi(v)\psi\Big)^{\frac{p+1}{p}}\le -bK_p \Big(\int_{\R^n} u^p\varphi(v)\psi\Big)^{\frac{p+1}{p}},
$$
where $K_p:= (e^{\sigma\|v_0\|_\infty^2}\int_{\R^n}\psi)^{-\frac{1}{p}}$.  This together with \eqref{proof-thm1-eq5} implies that
\begin{align*}
\frac{1}{p}\frac{d}{dt}\int_{\R^n}u^{p}\varphi(v)\psi
&\le M\int_{\R^n} u^p \varphi(v)\psi  - bK_p\Big(\int_{\R^n} u^p\varphi(v)\psi\Big)^{\frac{p+1}{p}}
\end{align*}
By the comparison principle for ODEs, we have
\begin{equation}
\label{proof-thm1-eq6}
\int_{\R^n}u^{p}\psi \leq \int_{\R^n}u^{p}\varphi(v)\psi \le \max\left[ \int_{\R^n}u_0^{p}\varphi(v_0)\psi, \Big(\frac{M}{bK_p}\Big)^p\right].
\end{equation}
Note that $M$ and $K_p$ are independent of $x_0\in\R^n$. Hence \eqref{proof-thm1-eq2} holds, and the theorem is thus proved.
\end{proof}

\section{Global  existence of classical solutions of \eqref{main-eq2}}

In this section, we study  the global existence of classical solutions of \eqref{main-eq2}
and prove Theorem \ref{main-thm2}.

\begin{proof}[Proof of Theorem \ref{main-thm2}]
First, note that by Theorem \ref{general-global-existence-thm}, it suffices to prove that there is
$p>\max\{1,\frac{n}{2}\}$ such that
\begin{equation}
\label{proof-thm2-eq1}
\limsup_{t\to T_{\max}}\sup_{x_0\in\R^n}\int_{B(x_0,1)}(u(t,x+x_0;u_0,v_0))^p dx<\infty.
\end{equation}
Let $\psi\in C^\infty(\R^n)$ be as in \eqref{psi-eq00}. To prove \eqref{proof-thm2-eq1},
it is equivalent to prove
\begin{equation}
\label{proof-thm2-eq2}
\limsup_{t\to T_{\max}}\sup_{x_0\in\R^n}\int_{\R^n} u^p(t,x+x_0)\psi(x)dx<\infty.
\end{equation}

In the following, we prove \eqref{proof-thm2-eq2} under the condition \eqref{cond-eq2}, that is,
$$|\chi| \cdot \mu< b\cdot  \sup _{\gamma >\max\{1,\frac{n}{2}\}}\frac{\gamma }{\gamma-1} \big(C_{{ \gamma +1,n}}\big)^{-\frac{1}{\gamma+1}}.
$$
Fix $p>\max\{1,\frac{n}{2}\}$ such that
$$
|\chi| \cdot \mu < \frac{bp}{p-1}\Big(C_{p+1,n}\big)^{-\frac{1}{p+1}}.
$$

 \noindent By the arguments of \eqref{f-s-eq0}, with the term $\int\frac{p+1}{\tau p}u^p\psi dx$ replaced by $\int\frac{\lambda(p+1)}{\tau p}u^p\psi dx$,
we have
\begin{equation}\label{proof-thm2-eq3}
\int_{\R^n}u^p(t,\cdot)\psi dx\le  e^{\frac{\lambda(p+1) (t_0-t)}{\tau}}\int_{\R^n}u^p(t_0,\cdot)\psi dx+p\int_{t_0}^t
e^{\frac{\lambda(p+1) (s-t)}{\tau}} f(s)ds,
\end{equation}
where, for some $r>0$ to be determined,
\begin{align}
\label{f-s-eq}
f(s)&:=A_p r^{-p}|\chi|^{p+1}\int_{\R^n}|\Delta v(s,\cdot)|^{p+1} \psi dx+\Big(\frac{|\chi|\k_1}{p+1}+\frac{|\chi|\k_1 (p-1)}{p (p+1)}\Big)\int_{\R^n}|\nabla v(s,\cdot)|^{p+1}\psi dx\nonumber  \\
&\quad -\big(b-\frac{|\chi|\k_1 p}{p+1}-\frac{|\chi|\k_1(p-1)}{p+1}-r-\eps \big)\int_{\R^n} u^{p+1}(s,\cdot)\psi dx+C_{\eps,\lambda}
\end{align}
and
$$C_{\eps,\lambda} := \frac{1}{p+1}\Big[\frac{p+1}{p}\eps\Big]^{-p}\Big[a+\frac{\k_1}{2}+\frac{\lambda(p+1)}{\tau p}\Big]^{p+1}\int_{\R^n}\psi\quad\text{and}\quad A_p:=\frac{1}{p}\big(\frac{p-1}{p+1}\big)^{p+1}. $$

Let $\psi_1 = \psi^{\frac{1}{p+1}}$. Then  $v\psi_1$ solve
$$
\tau(v\psi_1)_t =\Delta(v\psi_1)  - \lambda v\psi_1 -2\nabla v\cdot\nabla\psi_1 + v\Delta \psi_1 +\mu u \psi_1.
$$
By Lemma \ref{maximal-regularity-lm} with $\gamma:=p+1$ and $\alpha:=\lambda$, we have
\begin{align}
\label{f-s-eq3}
&\int_{t_0}^t \int_{\R^n} e^{\frac{\lambda (p+1)s}{\tau}}\left[(v\psi_1)^{p+1}+|\nabla (v\psi_1)|^{p+1}+|\Delta (v \psi_1)|^{p+1} \right]dx ds\nonumber\\
&\qquad\le C_{p+1,  n}\int_{t_0}^t \int_{\R^n}e^{\frac{\lambda (p+1)s}{\tau}}  \left[ -2\nabla v\nabla\psi_1 + v\Delta \psi_1 +\mu u\psi_1\right]^{p+1}dx ds\nonumber\\
&\qquad\quad+ C_{p+1, n} (t+\tau^{p+1}t_0
^{-p})\|v_0\psi_1\|_{L^{p+1}(\R^n)}^{p+1}.
\end{align}
Hence, by \eqref{4.8} (which holds the same here), for any $\eps>0$, there is $0<\k_1\ll1$ such that
\begin{align*}
\int_{t_0}^t& \int_{\R^n}e^{\frac{\lambda (p+1)s}{\tau}}\left[ (v\psi_1)^{p+1}
+(|\nabla v|\psi_1)^{p+1}+(|\Delta v|\psi_1)^{p+1}\right]dx ds\nonumber\\
&\le (1+\eps)^2\int_{t_0}^t \int_{\R^n} e^{\frac{\lambda (p+1)s}{\tau}}\left[(v\psi_1)^{p+1}+|\nabla (v\psi_1)|^{p+1}+|\Delta (v \psi_1)|^{p+1} \right]dx ds\nonumber\\
&\le (1+\eps)^2 C_{p+1,n}\int_{t_0}^t  \int_{\R^n} e^{\frac{\lambda (p+1)s}{\tau}}\Big( -2\nabla v\cdot\nabla\psi_1+ v\Delta \psi_1 +\mu u \psi_1\Big)^{p+1}dx ds\nonumber\\
&\qquad + (1+\eps)^2 C_{p+1,n} (t+\tau^{p+1}t_0
^{-p})\|v_0(\cdot)\psi_1\|_{L^{p+1}(\R^n)}^{p+1}\\
&\le (1+\eps)^2 C_{p+1,n}\int_{t_0}^t \int_{\R^n} e^{\frac{\lambda (p+1)s}{\tau}} \left[(1+\eps)\mu^{p+1}  (u\psi_1)^{p+1} + \eps (|\nabla v|\psi_1)^{p+1}+ \eps(v \psi_1)^{p+1}\right] dxds\\
&\qquad + (1+\eps)^2C_{p+1,n} (t+\tau^{p+1}t_0
^{-p})\|v_0(\cdot)\psi_1\|_{L^{p+1}(\R^n)}^{p+1}.
\end{align*}
This implies that
\begin{align}\label{delta-v1}
&\int_{t_0}^t \int_{\R^n}e^{\frac{\lambda (p+1)s}{\tau}}(|\Delta v|\psi_1)^{p+1}dx ds\nonumber\\
&\le -\int_{t_0}^t \int_{\R^n}e^{\frac{\lambda (p+1)s}{\tau}}\left[(v\psi_1)^{p+1} +(|\nabla v|\psi_1)^{p+1}\right]dx ds\nonumber\\
&\quad+  (1+\eps)^2 C_{p+1,n}\int_{t_0}^t \int_{\R^n} e^{\frac{\lambda (p+1)s}{\tau}} \left[(1+\eps)\mu^{p+1}  (u\psi_1)^{p+1} + \eps (|\nabla v|\psi_1)^{p+1}+ \eps(v \psi_1)^{p+1}\right] dxds\nonumber\\
&\quad + (1+\eps)^2C_{p+1,n} (t+\tau^{p+1}t_0
^{-p})\|v_0(\cdot)\psi_1\|_{L^{p+1}}^{p+1}.
\end{align}
Choose $\k_1\ll 1$ such that
$$
\frac{|\chi|\k_1}{p+1}+\frac{|\chi|\k_1 (p-1)}{p (p+1)}\le \eps,\quad
\frac{|\chi|\k_1 p}{p+1}+\frac{|\chi|\k_1(p-1)}{p+1}<\eps.
$$
Then by \eqref{proof-thm2-eq3} and \eqref{delta-v1}, we have
\beq
\begin{aligned}
\label{tm2-eqn4}
&\quad\,\int_{\R^n}u^p(t,x)\psi(x)dx \\
&\le-p\left(b-r-2\eps -A_pr^{-p}|\chi|^{p+1}(1+\eps)^3\mu^{p+1}C_{p+1,n}\right)\int_{t_0}^t \int_{\R^n}e^{\frac{\lambda(p+1)(s-t)}{\tau}}u^{p+1}\psi dxds \\
&\quad  -p\left(A_p r^{-p}|\chi|^{p+1}(1-(1+\eps)^2C_{p+1,n}\eps) -p\eps\right)\int_{t_0}^t \int_{\R^n}e^{\frac{\lambda(p+1)(s-t)}{\tau}}|\nabla v|^{p+1}\psi dx\ ds \\
&\quad  -p A_p r^{-p}|\chi|^{p+1}\left(1-C_{p+1,n}(1+\eps)^2{\eps}\right)\int_{t_0}^t\int_{\R^n} e^{\frac{\lambda(p+1)(s-t)}{\tau}}v^{p+1}\psi dx ds +C^*
\end{aligned}
\eeq
where $C^*$ is such that
\begin{align*}
(1+\eps)^2 e^{-\frac{\lambda (p+1)t}{\tau}}C_{p+1,n} (t+\tau^{p+1}t_0
^{-p})\|v_0(\cdot)\psi_1\|_{L^{p+1}}^{p+1} +p\int_{t_0}^t e^{\frac{\lambda (p+1)(s-t)}{\tau}} C_{\eps,\lambda} ds\le C^*.
\end{align*}
It is clear that $C^*$ exists and can be selected independent of $t$.
Next, note that
$$
\min_{r>0} \left(A_p C_{p+1,n} r^{-p}|\chi|^{p+1}\mu^{p+1}+r\right)=\frac{p-1}{p}\left(C_{p+1,n}\right)^{\frac{1}{p+1}}|\chi|\mu.
$$
Because $b>\left(\inf_{{p} >\max\{1,\frac{n}{2}\}}\frac{p-1}{p}C_{p+1,n}^{\frac{1}{p+1}}\right)|\chi|\mu
$, there are $r>0$ and
$p>\max\{1,\frac{n}{2}\}$ such that
$$
b>r+A_pr^{-p}C_{p+1,n}|\chi|^{p+1}\mu^{p+1}.
$$
Fix such $r>0$ and $p>\frac{n}{2}$. Choose $\eps\ll 1$ such that
$$
b-r-2\eps -A_pr^{-p}|\chi|^{p+1}(1+\eps)^3\mu^{p+1} C_{p+1,n}>0
$$
and
$$
A_p r^{-p}|\chi|^{p+1}(1-(1+\eps)^2C_{p+1,n}\eps) -p\eps>0 \qquad and \qquad 1-C_{p+1,n}(1+\eps)^2{\eps}>0.
$$
Then by \eqref{tm2-eqn4},
$$
\sup_{t\in [t_0,T_{\max}),x_0\in\infty}\int_{\R^n}u^p(t,x+x_0)\psi(x)<\infty.
$$
Theorem \ref{main-thm2} is thus proved.
\end{proof}

\section{Global existence of classical solutions of \eqref{main-eq3}}
In this section, we study  the global existence of classical  solutions  of \eqref{main-eq3} and
prove theorem \ref{main-thm3}.
First,   if no confusion occurs, we put
$$
u=u(t,x+x_0;u_0) \quad {\rm and}\,\, v=v(t,x+x_0;u_0).
$$

\begin{proof}[Proof of Theorem \ref{main-thm3}]
By Theorem \ref{general-global-existence-thm}, it suffices to prove that there is $p>\max\{1,\frac{n}{2}\}$ such that
\begin{equation}
\label{proof-main-thm3-eq1}
\limsup_{t\to T_{\max}}\sup_{x_0\in\R^n}\int_{\R^n} u^p(t,x+x_0)\psi(x)dx<\infty
\end{equation}
for some
 $\psi\in C^{\infty}(\R^n)$  satisfying \eqref{psi-eq00} with some $\k_1,\k_2>0$.
If $\chi\le 0$,  \eqref{proof-main-thm3-eq1}  follows from \cite[Theorem A]{SaSh2}.
We then only need to prove   \eqref{proof-main-thm3-eq1} for the case that $\chi>0$.

\smallskip

In the following, we assume that $\chi>0$. 
By \eqref{cond-eq3}, 
there is $p>\max\{1,\frac{n}{2}\}$ such that
\beq\lb{6.3}
b>\mu\chi \big(1-\frac{1}{p}\big)=\frac{\mu \chi(p-1)}{p}.
\eeq
Fix such $p>\max\{1,\frac{n}{2}\}$.
From the equation, we have
\beq\lb{6.2}
\begin{aligned}
\frac{1}{p}\frac{d}{dt}\int_{\R^n} u^p \psi dx&=\int_{\R^n}u^{p-1}\psi\Delta u-\chi\int_{\R^n}u^{p-1}\psi\nabla \cdot(u\nabla v)dx+\int_{\R^n}u^p \psi(a-b u)dx\\
&=-(p -1)\int_{\R^n} u^{p-2}|\nabla u|^2\psi-\int_{\R^n} u^{p -1}\nabla u\cdot\nabla \psi\\
&\quad +\chi (p-1)\int_{\R^n} u^{p -1} \nabla u\cdot (\nabla v)\psi
+\chi\int_{\R^n}u^p \nabla v\cdot\nabla \psi+\int_{\R^n}u^p (a-bu)\psi.
\end{aligned}
\eeq
Note that $\Delta v=\lambda v-\mu u$. Hence
\begin{align*}
&\quad\,\,    \chi(p-1)\int_{\R^n}u^{p-1}\nabla u\cdot (\nabla v)\psi+\chi\int_{\R^n} u^p \nabla v\cdot\nabla \psi\\
&= -\frac{\chi(p-1)}p\int_{\R^n}u^{p}  \nabla v\cdot\nabla \psi-\frac{\chi(p-1)}p\int_{\R^n}u^{p}  (\Delta v) \psi+\chi\int_{\R^n} u^p \nabla v\cdot\nabla \psi\\
&=\frac{\chi}{p}\int_{\R^n}
u^p \nabla v\cdot\nabla \psi-\frac{\lambda\chi(p-1)}{p}\int_{\R^n} u^p v\psi+\frac{\mu\chi(p-1)}{p}\int_{\R^n} u^{p+1}\psi\\
&\le \frac{\chi\kappa_1}{p}\int_{\R^n}
u^p |\nabla v|\psi  +\frac{\mu\chi(p - 1)}{p}\int_{\R^n} u^{p+1}\psi.
\end{align*}
It then follows from \eqref{6.2} and Young's inequality that
\begin{align*}
\frac{1}{p}\frac{d}{dt}\int_{\R^n} u^p \psi dx
&\leq -(p -1)\int_{\R^n} u^{p-2}|\nabla u|^2\psi-\int_{\R^n} u^{p -1}\nabla u\cdot\nabla \psi +\frac{\chi\kappa_1}{p}\int_{\R^n}
u^p |\nabla v|\psi\\
&\quad+\frac{\mu\chi(p - 1)}{p}\int_{\R^n} u^{p+1}\psi+\int_{\R^n}u^p (a-bu)\psi\\
&\le -(p-1)\int_{\R^n} u^{p-2} |\nabla u|^2\psi +{\frac{\k_1}{2}\int_{\R^n} u^{p-2}|\nabla u|^2\psi+\frac{\k_1}{2}\int_{\R^n} u^p \psi} \\
&\quad  + \frac{\chi\kappa_1}{p}\int_{\R^n}u^p |\nabla v|\psi+\frac{\mu\chi(p-1)}{p}\int_{\R^n}u^{p+1}\psi  +\int_{\R^n}u^p(a-bu)\psi\\
&\le  -(p-1-\frac{\k_1}{2})\int_{\R^n} u^{p-2} |\nabla u|^2\psi +\frac{\k_1}{2}\int_{\R^n} u^p \psi +\frac{\mu\chi(p-1)}{p}\int_{\R^n}u^{p+1}\psi\\
&\quad     +\frac{\chi\k_1}{p+1}\int_{\R^n}u^{p+1}\psi
+ \frac{\chi\k_1}{p (p+1)}\int_{\R^n} |\nabla v|^{p+1}\psi+\int_{\R^n}u^p(a-bu)\psi.
\end{align*}
By \eqref{main-eq3-estimate-eq1} and \eqref{main-eq3-estimate-eq2}, for any $p>\max\{1,\frac{n}{2}\}$ and $0<\k_1\ll 1$, there is $K_p>0$ such that
\begin{equation*}
\int_{\R^n}|\nabla v|^{p+1}\psi \le K_p\Big(\int_{\R^n}u^p\psi\Big)^{\frac{p+1}{p}}\le K_p\Big( \int_{\R^n}u^{p+1}\psi \Big)\Big(\int_{\R^n}\psi^{p+1}\Big)^{\frac{1}{p}}.
\end{equation*}
By further taking $\k_1$ to be sufficiently small, we have
\begin{align}
\label{proof-main-thm3-eq2}
\frac{1}{p}\frac{d}{dt}\int_{\R^n} u^p \psi dx
&\le  2a \int_{\R^n} u^p \psi - \frac{1}{2}\left(b-\frac{\mu\chi(p-1)}{p}\right)  \int_{\R^n} u^{p+1}\psi.
\end{align}
This  together with \eqref{6.3} and the comparison principle for scalar ODEs implies that \eqref{proof-main-thm3-eq1} holds. The theorem is thus proved.
\end{proof}

\appendix

\section{Appendix:  Maximal regularity}

There are a large number of literatures discussing maximal regularity for parabolic equations, see e.g., \cite{HiPr,Lieberman,lamberton,DuRo}. However, to the best of our knowledge, none provides explicit dependence on the dimension $n$ and the exponent $\gamma$ of the $L^\gamma$ norms. In this appendix, we  estimate the dimension $n$ and $\gamma$ dependencies of the constant $C_{\gamma,n}$  in Lemma \ref{maximal-regularity-lm}. Below  we take {$\tau=1$ and} $v(0,\cdot)=0$  for simplicity. 
\begin{tm}\lb{T.A.1}
Let $\gamma\in (2,\infty)$. There exists an absolute constant $C>0$ independent of $\gamma$ and $n$ such that the following holds for
\[
C_{\gamma,n}:=C^\gamma (\gamma-2)^{1-\gamma}2^{9n(\gamma-2)}.
\]
For any  $T>0$, if  $g \in L^\gamma((0,T), L^\gamma(\R^n))$ and $v(\cdot,\cdot)\in W^{1,\gamma}((0,T),L^{\gamma}(\R^n))\cap L^{\gamma}((0,T), W^{2,\gamma}(\R^n))$ solve  the following initial boundary value problem,
\begin{equation*}
\begin{cases}
v_t =\Delta v  + g,&\quad x\in \R^n,\,\,  0<t<T\cr
v(0,x) = 0, &\quad x\in \R^n,
\end{cases}
\end{equation*}
then
\begin{equation*}
    \begin{aligned}
\int_{0}^T \int_{\R^n}\Big( |v (t,x)|^\gamma +|\nabla v(t,x)|^\gamma +|\Delta v(t,x)|^{\gamma}\Big)dxdt \le C_{\gamma,n} \int_{0}^T \int_{\R^n}|g(t,x)|^{\gamma}dx dt.
\end{aligned}
\end{equation*}
\end{tm}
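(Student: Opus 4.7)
The plan is to treat $T: g \mapsto \Delta v$ as a parabolic Calder\'on--Zygmund operator on $\R^{n+1}$ and run the classical singular-integral machinery while tracking the explicit dependence of constants on $n$ and $\gamma$. First, Duhamel's formula gives $\Delta v(t,x) = (Tg)(t,x) := \int_0^t\!\!\int_{\R^n} K(t-s,x-y)\,g(s,y)\,dy\,ds$ with $K(t,x) = \Delta_x G(t,x)\mathbf{1}_{t>0}$, where $G$ is the heat kernel \eqref{heat-kernel}. Extending $g$ by zero outside $(0,T)$ and taking the Fourier transform in both variables, $T$ becomes a Fourier multiplier with symbol $-|\xi|^2/(i\tau+|\xi|^2)$ of modulus at most $1$; hence $\|Tg\|_{L^2(\R^{n+1})} \le \|g\|_{L^2(\R^{n+1})}$ with absolute constant $A_1=1$ independent of $n$.

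Second, I would establish a weak $(1,1)$ bound via the parabolic Calder\'on--Zygmund decomposition based on parabolic cubes $Q_r(t,x) = (t-r^2,t] \times \prod_{i=1}^n [x_i-r/2,x_i+r/2]$. Given $\lambda>0$, decompose $g = g_1 + \sum_j b_j$ at level $\lambda$ with $\|g_1\|_\infty \le C\cdot 2^n \lambda$, each $b_j$ supported in a disjoint parabolic cube $Q_j$, $\int b_j = 0$, and $\sum_j |Q_j| \le \lambda^{-1}\|g\|_1$. The good part $g_1$ is handled by the $L^2$ bound; the bad parts $b_j$ use the H\"ormander-type inequality
\begin{equation*}
\sup_j \sup_{(s,y)\in Q_j} \int_{\R^{n+1}\setminus Q_j^{*}} |K(t-s,x-y) - K(t-s_j,x-x_j)|\,dx\,dt \le C\cdot 2^{c_0 n},
\end{equation*}
where $Q_j^{*}$ is a suitable parabolic dilate of $Q_j$; this follows from the explicit decay of $|\Delta G|$, $|\nabla_x\Delta G|$ and $|\partial_t\Delta G|$ on the parabolic scale and the Vitali-type covering constants for parabolic balls. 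Assembling the pieces gives a weak-$(1,1)$ operator norm $A_0 \le 2^{c_1 n}$ for some absolute $c_1$.

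Third, apply Marcinkiewicz interpolation between weak $(1,1)$ (constant $A_0$) and strong $(2,2)$ (constant $A_1=1$) to get $L^p \to L^p$ bounds for $1<p<2$, then dualize (using that $T^*$ satisfies the same bounds after time reversal) to obtain the bound for $\gamma > 2$. Writing $p = \gamma/(\gamma-1)$, one arrives at an inequality of the form $\|T\|_{\gamma \to \gamma} \le C_\gamma\, A_0^{(\gamma-2)/\gamma}$, where $C_\gamma$ captures the blowup of the Marcinkiewicz constant near the weak endpoint; raising to the $\gamma$-th power and combining with $A_0 \le 2^{c_1 n}$ yields the claimed form $C^\gamma (\gamma-2)^{1-\gamma}\, 2^{9n(\gamma-2)}$ provided $c_1 \le 9$. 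The $|\nabla v|^\gamma$ and $|v|^\gamma$ terms are treated analogously --- either by the same Calder\'on--Zygmund framework applied to the kernels $\nabla_x G$ and $G$ with appropriate modifications, or by Gagliardo--Nirenberg interpolation between $v$ and $\Delta v$ combined with direct Duhamel bounds on the heat semigroup.

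The main obstacle is step three: carrying the blowup of the Marcinkiewicz constant near $p=1$ (equivalently near $\gamma=2$ after duality) cleanly through the computation so that the $\gamma$-dependence emerges as the specific factor $(\gamma-2)^{1-\gamma}$, while simultaneously ensuring that the Vitali constants and the H\"ormander-type kernel-difference estimates in step two produce a dimensional constant no worse than $2^{9n}$. The required bookkeeping is delicate but uses no new ideas beyond classical parabolic singular integral theory.
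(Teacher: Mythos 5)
Your high-level architecture matches the paper's: prove a dimension-free $L^2$ bound, establish a weak $(1,1)$ bound with explicit $n$-dependence, interpolate via Marcinkiewicz, and dualize to reach $\gamma>2$. The $L^2$ step and the interpolation/duality step are essentially the same (the paper proves $\|\Delta v\|_2\le\|g\|_2$ by a direct energy estimate rather than a Fourier multiplier, but both give constant $1$). Where you diverge is the heart of the matter, the weak $(1,1)$ bound.

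You propose the classical route: a parabolic Calder\'on--Zygmund decomposition with mean-zero bad parts $b_j$ on parabolic cubes, followed by a H\"ormander integral condition on the kernel $K=\Delta G$. The paper instead follows the Duong--Robinson / Hieber--Pr\"uss scheme, which deliberately avoids a direct H\"ormander verification: the bad parts $b_i$ (supported on parabolic balls $B_i^+$, and \emph{not} required to have mean zero) are split into a mollified piece $h_i$ (convolution against a kernel $D(\tau_i,\cdot)$ built from the heat kernel) and a remainder $l_i=b_i-h_i$. The mollified piece is controlled in $L^2$ using a Harnack-type inequality for the kernel together with the Stein--Str\"omberg bound $\|M^+f\|_2\le Cn\|f\|_2$; the remainder is controlled in $L^1$ via an explicit contour-integral (analytic semigroup) representation of the operator applied to $b_i-h_i$. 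This design is what makes the explicit constants ($2^{8n}n^2$ for the weak-$(1,1)$ norm, hence $2^{9n(\gamma-2)}$ after interpolation) tractable. Your route could in principle work, and it is more elementary conceptually, but it requires bookkeeping of a different and possibly less forgiving kind: the covering/overlap constants for parabolic cubes and, crucially, the H\"ormander integral
$\int_{\R^{n+1}\setminus Q_j^*}|K(t-s,x-y)-K(t-s_j,x-x_j)|\,dx\,dt$ with an \emph{explicit} $n$-dependent bound. Since $|\Delta G(t,x)|\sim t^{-n/2-1}(1+|x|^2/t)e^{-|x|^2/4t}$, the Gaussian tails control the $x$-integral with dimension-free constants, but the derivative estimates needed for the kernel difference produce combinatorial factors whose precise growth in $n$ you have not pinned down.

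That is the gap: you have the right skeleton and you correctly identify the critical quantitative step, but you stop at the point where the real work begins. The proposal asserts the H\"ormander estimate holds ``with constant $C\cdot 2^{c_0 n}$'' and that the assembled weak-$(1,1)$ norm is ``$\le 2^{c_1 n}$ for some absolute $c_1$'' without deriving either, and the final sentence explicitly flags this as ``delicate bookkeeping.'' For the theorem to hold as stated you would need $c_1\le 9$ (or to adjust the exponent $9$), and nothing in the proposal establishes this. By contrast, the paper's chosen scheme makes the accounting explicit at each stage (the Harnack constant $e^2 2^n$, the covering overlap $2^n$ for $B_i^+$, the Stein--Str\"omberg factor $Cn$, the $2^n$ from integrating $|G(z,\cdot)|$ on the contour), so the final $2^{8n}n^2$ is actually computed rather than asserted. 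If you want to pursue your version, the place to concentrate is a genuine derivation of the parabolic H\"ormander constant for $\Delta G$ with the dimensional factor written out; absent that, the proposal is an outline rather than a proof.
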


The proof is based on Calder\'{o}n-Zygmund decomposition and largely follows the one in \cite{DuRo}. We are only able to bound $C_{\gamma,n}$ exponentially in $n$ and $\gamma$, which might not be optimal.

\begin{proof}
Let us only prove for all $T>0$,
\beq\lb{M1}
\int_{0}^T \int_{\R^n}|\Delta v(t,x)|^{\gamma}dxdt \le C_{\gamma,n} \int_{0}^T \int_{\R^n}|g(t,x)|^{\gamma}dx dt,
\eeq
as the rest will be standard. We will first prove the result for $\gamma=2$, and then apply Calder\'{o}n-Zygmund decomposition to obtain a bound for $\gamma=1$. Finally, we conclude the proof via Marcinkiewicz interpolation theorem and a duality argument. Let us emphasize that all constants below, if not specified or not with subscript $n$ or $\gamma$, do not depend on $n$ and $\gamma$.

\medskip

We divide the proof into four steps.

\medskip

\noindent {\bf Step 1. } Firstly, let us prove for $\gamma=2$. We can assume that $v$ is smooth, and $v$ and $|\nabla v|$ decay sufficiently fast at $x=\infty$. The general result follows by approximations.
Call $u:=(u_1,\ldots,u_n)=\nabla v$ which then solves
\beq\lb{M0}
u_t=\Delta u+\nabla g\quad\text{ with }u(0,\cdot)=0.
\eeq
Let us apply $(\cdot\, u)$ to both sides of \eqref{M0}, integrate over $[0,T]\times \R^n$ for some $T>0$. By integration by parts,
\[
{\frac{1}{2}}\int_{\R^n}u(T,x)^2dx=-\sum_{i=1}^n\int_0^T\int_{\R^n}|\nabla u_i|^2 dxdt-\int_{\R^n}\int_0^T g\,\nabla\cdot u\, dxdt.
\]
Thus, by Young's inequality,
\[
\sum_{i=1}^n\int_{\R^n}\int_0^T|\nabla u_i|^2 dxdt\leq \frac12\int_0^T\int_{\R^n}|g|^2dxdt+\frac12 \int_0^T \int_{\R^n}|\nabla\cdot u|^2dxdt
\]
which implies that
\[
\int_{\R^n}\int_0^T|\Delta v|^2 dxdt=\sum_{i=1}^n\int_{\R^n}\int_0^T|\nabla u_i|^2 dxdt\leq \int_0^T\int_{\R^n}|g|^2dxdt.
\]
Thus the conclusion \eqref{M1} holds with $C_{2,n}\equiv 1$.
\medskip

\noindent {\bf Step 2.} We introduce some notations and preliminary results. Let $\calT$ be the strongly continuous semigroup generated by Laplacian on $L^2(\R^n)$ and so it is given by
\[
(\calT(t)f)(x):=\int_{\R^n}G(t,x-y)f(y)dy
\]
where $G$ is given in \eqref{heat-kernel}. We can write
$
G(t,x-y)= |4\pi t|^{-n/2}p(\frac{|x-y|^2}{t})
$
with $p(s):=\exp(-s/4)$. Moreover, one can extend $\calT$ to an analytic semigroup of angle $\frac{\pi}{4}$ in the complex plane. Actually, any angle $<\frac{\pi}{2}$ works. We claim that there exists $\eps\in (0,1)$ independent of the dimension such that for each $\theta\in (0,\frac{\pi\eps}4)$ we have
\beq\lb{M5}
|G(z,x-y)|\leq (4\pi\,\Re z)^{-n/2} p\big(\frac{|x-y|^2}{2|z|}\big)
\eeq
for all $x,y\in\R^n$ and all $z\in\{|\arg z|<\theta\}$.
The proof is straightforward by the explicit formula of $G(z,x)$. 


\medskip

Consider the following metric $ d^+$ defined on $\R^+\times\R^n$ by
\[
{d}^+((t,x),(s,y)):=(|x-y|^2+|t-s|)^{1/2}.
\]
For $r>0$, the corresponding ball under this metric is
\[
B^+((t,x),r):=\{(s,y)\in \R^+\times\R^n\,:\, {d}^+((t,x),(s,y))<r\}.
\]

Now we define for $x,y\in\R^n$, $t,s,\tau>0$ and $\rho\in\R$,
\beq\lb{M2}
e_\tau(\rho):=\frac{\exp(-|\rho|/\tau)}{2\tau},\quad D(\tau,t,x,s,y):=G(\tau,x-y)e_\tau(t-s).
\eeq
It follows from the proof of \cite[Lemma 4.2]{HiPr} (with $a=1$) (after keeping track of the dependence on $n$) that
\beq\lb{M3}
\sup_{(s,y)\in B^+((\rho,z),\sqrt{\tau})}D(\tau,t,x,s,y)\leq e^2 2^n \inf_{(s,y)\in B^+((\rho,z),\sqrt{\tau})}D(4\tau,t,x,s,y)
\eeq
for all $x,z\in\R^n,t,\rho\geq 0$ and all $\tau>0$.

\medskip

We write $|\Omega|$ or $\mu(\Omega)$ as the standard Lebesgue  measure of $\Omega\subseteq \R^{n+1}$ (or $\Omega\subseteq\R^n$).
We introduce the space-time maximal function $M^+$:
\[
(M^+f)(t,x):=\sup_{a>0,r>0}|Q_{a,r}|^{-1}\int_{Q_{a,r}}|f(t+s,x+y)|dsdy
\]
for $f\in L^p(\R^+\times\R^n)$ and $p\in[1,\infty]$, where $Q_{a,r}:=[-a,a]\times B(r)$. Let $f\in L^p(\R^+\times \R^n)$ for any $p\in [1,\infty]$. Since for each $t>0$, $G(t,x)$ is a radially symmetric and decreasing, and $\int_{\R^n}G(t,x)dx=1$, we have (see \cite{DuRo})
\[
\int_{\R^n}G(t,x-y)|f(t,y)|dy\leq \sup_{r>0}\,|B(r)|^{-1}\int_{B(r)}|f(t,x+y)|dy=: (Mf)(t,x).
\]
Similarly, since for any $\tau>0$, $e_\tau(\rho)$ is a radially symmetric and decreasing, and $\int_{\R}e_\tau(\rho)d\rho=1$, we get
\beq\lb{M4}
\begin{aligned}
\int_0^\infty\int_{\R^n}D(\tau,t,x,s,y)|f(s,y)|dyds\leq
\sup_{a>0}\,(2a)^{-1}\int_{t-a}^{t+a}Mf(s,x) =(M^+f)(t,x) .
\end{aligned}
\eeq

Moreover, it follows from the proof of \cite{StSt} that there exists an absolute constant such that
\beq\lb{M12}
\|M^+f\|_2\leq Cn\|f\|_2\quad\text{ for any $f\in L^2(\R^+\times\R^n)$.}
\eeq

\medskip


Finally, define
\[
\calR f:=\Delta \int_0^t\calT(t-s)f(s)ds.
\]
The goal is to derive an  estimate, which is explicit in $n$ and $\gamma$, for the map $\calR$ from $L^\gamma(\R^+\times\R^n)$ to $L^\gamma(\R^+\times\R^n)$.

\medskip

\noindent {\bf Step 3.} Let us estimate the upper bound of $\calR$ from $L^1(\R^+\times\R^n)$ to $L^1_w(\R^+\times\R^n)$, where $L^1_w(\R^+\times\R^n)$ denotes the weak $L^1$ space. We apply Calder\'{o}n-Zygmund decomposition. For any $f\in L^1(\R^+\times\R^n)$ and fixed $\alpha>0$, there is a disjoint union of open space-time cubes $Q_k$ such that
\begin{enumerate}
    \item for each $Q_k$,
    \[
    \alpha \leq |Q_k|^{-1}\int_{Q_k}|f(t,x)|dxdt\leq 2^{n+1}\alpha.
    \]
    \item $|f(t,x)|\leq \alpha$ almost everywhere in the complement of $\cup_k Q_k$.
\end{enumerate}
Let us cover the cubes by countably many balls $B_k^+:=B^+((t_k,x_k),{r_k})$ for some $(t_k,x_k)\in\R^{n+1}$ and $r_k>0$. Note that for any $(t,x)\in [-\frac12,\frac12]^{n+1}\subset\R^{n+1}$, we have
\[
|x|^2+|t|\leq \frac{1}{4}n+\frac{1}{2}\leq \frac{1}{2}n.
\]
Thus one can cover $[-\frac12,\frac12]^{n+1}$ by $B^+((0,0),{\sqrt{n/2}})$. By Stirling's formula $\sqrt{2\pi n}(\frac{n}e)^n\leq n!\leq e^{\frac1{12}}\sqrt{2\pi n}(\frac{n}e)^n$, the volume of a unit ball in $\R^n$ can be estimated by
\[
|B(1)|=\frac{2(2\pi)^{(n-1)/2}}{n!!}< \frac{2}{\sqrt{n\pi}}(\frac{2\pi e}{n})^{n/2}
\]
Thus we get
\[
|B^+((0,0),\sqrt{n/2})|\leq |B(1)|(\sqrt{n/2})^n 2\sqrt{n/2}\leq \frac{4}{\sqrt{2\pi}} (\pi e)^\frac{n}{2} \leq 2(3^n)\left|[-1/2,1/2]^{n+1}\right|.
\]
In view of this, we can assume
\[
|\cup_k B_k^+|\leq 2(3^n)|\cup_k Q_k|,
\]
and moreover, each $x\in\R^n$ is only contained in at most $2^n$ many balls $B_k^+$.
Next, we take $g(t,x):=f(t,x)$ for $(t,x)\in\R^{n+1}\backslash \cup_k Q_k$ and $g(t,x):=0$ otherwise. Then define
\[
b_0(t,x):=(f(t,x)-g(t,x)-b_0(t,x))\chi_{B_0^+}(t,x),
\]
and for $i=1,\ldots$, define iteratively:
\[
b_i(t,x)=\big(f(t,x)-g(t,x)-\sum_{k=0}^{i-1}b_k(t,x)\big)\chi_{B_i^+}(t,x).
\]
In this way, we obtain a decomposition of $f$ such that the following holds
\begin{itemize}
    \item[(f1)]  $f=g+\sum_i b_i$ with $b_i$ supported in $B_i^+$;

    \item[(f2)] $|g(t,x)|\leq \min\{|f(t,x)|,\alpha\}$ almost everywhere in $\R^{n+1}$;

    \item[(f3)] each $x\in\R^n$ is only contained in at most $2^n$ many balls $B_k^+$;

    \item[(f4)] $\|b_i\|_1\leq 2^{n+1}\alpha |B_i^+|$;

    \item[(f5)] $\sum_i |B_i^+|\leq 2(3^n) \sum_k|Q_k|\leq \frac{2(3^n)}\alpha \|f\|_1$.
\end{itemize}
The advantage of using $B_k^+$ instead of cubes is the Harnack type estimate  \eqref{M3}.
The constants are explicit and these properties are stronger than those in \cite{HiPr}. The rest of Step 3 largely follows the proof of \cite{HiPr}, except that we will keep track of constants' dependencies on $n$ and $\gamma$.

We now further decompose $\sum_i b_i=h+l$ with $b_i=h_i+l_i$ where
\[
h_i(t,x):=\int_0^\infty \int_{\R^n} G(\tau_i,x-y)e_{\tau_i}(t-s)b_i(s,y)dyds
\]
where $\tau_i:=\rho_i^2$ (and $\rho_i$ is the radius of $B^+_i$) and $e_\tau$ is given in \eqref{M2}. To this end, we define
\[
u:=\calR g,\quad w_1:=\calR h, \quad w_2:=\calR l.
\]
For any fixed $\alpha>0$, one has
\begin{align*}
\mu\{(t,x)\in\R^+\times\R^n\,:\,|\calR f(t,x)|>\alpha\} &\leq \mu\{(t,x)\in\R^+\times\R^n\,:\,|u(t,x)|>\alpha\}    \\
&+\mu\{(t,x)\in\R^+\times\R^n\,:\,|w_1(t,x)|>\alpha\}    \\
&+\mu\{(t,x)\in\R^+\times\R^n\,:\,|w_2(t,x)|>\alpha\} \\
&=:I_1+I_2+I_3.
\end{align*}
Below, we prove that each term $I_i$ for $i=1,2,3$ is bounded by $\frac{C(n)}{\alpha}\|f\|_1$.

\medskip
{\bf Step 3.1.} Estimate $I_1+I_2$. It is clear that
\[
I_1=\mu\{(t,x)\in\R^+\times\R^n\,:\,|u(t,x)|>\alpha\} \leq\frac{1}{\alpha^2}\|v\|_2^2.
\]
By Step 1,  $\|v\|_2^2\leq \|g\|_2^2$. Then use (f2) to get
\beq\lb{M13}
I_1 \leq \frac{1}{\alpha^2}\|g\|_2^2\leq \frac{1}{\alpha}\|g\|_1\leq \frac{1}{\alpha}\|f\|_1.
\eeq

As for $I_2$, it follows from \eqref{M3} and (f4) that for some $C>0$ independent of $n$ and $i$,
\begin{align*}
|h_i(t,x)|&\leq \int_{\R^n}\int_0^\infty D(\tau_i,t,x,s,y)|b_i(s,y)|dsdy\leq \|b_i\|_1\sup_{(s,y)\in B^+_i}D(\tau_i,t,x,s,y)\\
&\leq 2^{n+1}\alpha |B_i^+|\sup_{(s,y)\in B^+_i}D(\tau_i,t,x,s,y)\leq e^22^{2n+1}\alpha |B_i^+|\inf_{(s,y)\in B^+_i}D(4\tau_i,t,x,s,y)\\
&\leq e^22^{2n+1}\alpha \int_{\R^n}\int_0^\infty D(4\tau_i,t,x,s,y)\chi_i(s,y)dsdy
\end{align*}
where $\chi_i$ is the characteristic function of $B_i^+$.
This yields
\begin{align*}
\|h\|_2&=\sup_{\|\varphi\|_2\leq 1}\{|(h,\varphi)|\}    \leq \sup_{\|\varphi\|_2\leq 1}\left\{\sum_i|(h_i,\varphi)|\right\} \\
&\leq e^22^{2n+1}\alpha \sup_{\|\varphi\|_2\leq 1}\left\{\sum_i \int_{\R^n}\int_0^\infty\int_{\R^n}\int_0^\infty D(4\tau_i,t,x,s,y)\chi_i(s,y) \varphi(t,x)\,dsdydtdx\right\}.
\end{align*}
Therefore, using \eqref{M4} and that $D(\tau,t,x,s,y)=D(\tau,s,y,t,x)$ yields
\begin{align*}
\|h\|_2
&\leq e^22^{2n+1}\alpha \sup_{\|\varphi\|_2\leq 1}\left\{\sum_i \int_{\R^n}\int_0^\infty (M^+\varphi)(s,y)\chi_i(s,y) \,dsdy\right\}\\
&=e^22^{2n+1}\alpha \sup_{\|\varphi\|_2\leq 1}\{(M^+\varphi,\sum_i\chi_i)\}.
\end{align*}
Since $\varphi\to M^+\varphi$ is a bounded operator on $L^2(\R^+\times\R^n)$ by \eqref{M12}, it follows that
\[
\|h\|_2\leq C4^{n}n\alpha \sup_{\|\varphi\|_2\leq 1}\{(\varphi,\sum_i\chi_i)\}\leq C4^{n}n\alpha \|\sum_i\chi_i\|_2\leq C8^{n}n\alpha (\sum_i |B_i^+|)^{1/2},
\]
where in the last inequality we applied (f3). Then due to (f5),
\[
\|h\|^2_2\leq \left(C8^nn\alpha (3^n\|f\|_1/\alpha)^{1/2}\right)^2\leq  C2^{8n}n^2 \alpha \|f\|_1.
\]
Lastly, using Step 1 again,
\beq\lb{M14}
I_2=\mu\{(t,x)\in\R^+\times\R^n\,:\,|w_1(t,x)|>\alpha\} \leq \frac{1}{\alpha^2}\|w_1\|_2^2\leq \frac{1}{\alpha^2}\|h\|_2^2\leq \frac{C2^{8n}n^2 }{\alpha}\|f\|_1.
\eeq

\medskip

{\bf Step 3.2.} Estimate $I_3$. 
Recall that $w_2=\sum_i \calR(b_i-h_i)$ and set $w_{2,i}:=\calR(b_i-h_i)$. Then
\[
w_{2,i}(t,\cdot)=\int_0^t\Delta \calT(t-s)\left[b_i-\int_0^\infty e_{\tau_i}(s-r)\calT(\tau_i)b_i(r)dr\right]ds.
\]
Below we show that $\|w_{2,i}\|_{1}\leq C2^n \|b_i\|_1$ with $C$ independent of $n$ and $i$.

Note that the spectrum of $\Delta$ lies in the negative real axis. Take $\eps\in (0,1)$ from Step 2 so that  \eqref{M5} holds, and take $\theta\in (\pi/2,\pi/2+\frac{\pi\eps}4)$. Let $\Gamma_\pm=\{re^{\pm i\theta}\,:\, r\geq 0\}$ and $\Gamma_\pm'=\{re^{\pm i\pi\eps/4}\,:\, r\geq 0\}$.
Following the proof of \cite{HiPr}, we have
\beq\lb{M7}
w_{2,i}(t,\cdot)=w_{2,i}^+(t,\cdot)+w_{2,i}^-(t,\cdot):=\int_0^\infty K_+(t-s)b_i(s)ds+\int_0^\infty K_-(t-s)b_i(s)ds
\eeq
where the operators $K_\pm$ are defined as
\[
K_\pm(t-s)=\frac{1}{2\pi i}\int_{\Gamma_\pm}\int_{\Gamma_{\pm}'}\mu e^{-\mu z}e^{\mu(t-s)}\left[1-\int_{-\infty}^te^{\mu(s-r)}\frac{e^{-|r-s|/\tau_i}}{2\tau_i}e^{\mu\tau_i}dr\right]\calT(z)dzd\mu.
\]
Moreover, if we set
\[
\calK_\pm(t-s,x,y)=\frac{1}{2\pi i}\int_{\Gamma_+}\int_{\Gamma_{+}'}\mu e^{-\mu z}e^{\mu\tau_i}u_{\tau_i}(t-s,\mu)G(z,x-y)dzd\mu
\]
where $u_{\tau_i}$ is defined for $\mu\in\Gamma_+$ by
\[
u_{\tau_i}(t-s,\mu)=\left\{
\begin{aligned}
    &e^{\mu(t-s)}\left[e^{-\mu\tau_i}-\frac{1}{2(1-\mu\tau_i)}-\frac{1}{2(1+\mu\tau_i)}\right]+\frac{e^{-(t-s)/\tau_i}}{2(1+\mu\tau_i)},\quad && t\geq s,\\
    &\frac{e^{-(t-s)/\tau_i}}{2(1-\mu\tau_i)}, && t<s,
\end{aligned}
\right.
\]
then by the definition of $\calT$, we can re-write
\[
w_{2,i}^+(t,x)=\int_{\R^n}\int_0^\infty \calK_+(t-s,x,y)b_i(s,y)dsdy.
\]
This yields
\beq\lb{M6}
\begin{aligned}
&\quad \| w_{2,i}^+\|_1\leq \|b_i\|_1\sup_{y\in\R^n}\int_{-\infty}^\infty\int_{\R^n} |\calK_+(\tau,x,y)|dxd\tau    \\
& \leq \|b_i\|_1\sup_{y\in\R^n}\int_{-\infty}^\infty\int_{\R^n} \int_{\Gamma_+}\int_{\Gamma_{+}'}|\mu e^{-\mu z}||e^{\mu\tau_i}||u_{\tau_i}(\tau,\mu)||G(z,x-y)|dzd\mu dxd\tau
\end{aligned}
\eeq

Note that $z\in\Gamma_+'$ and $\Re z\geq \frac12|z|$ by $\frac{\pi\eps}4\leq1$. So we can use \eqref{M5} to have
\[
\int_{\R^n} |G(z,x-y)| dx \leq  \int_{\R^n}(4\pi\, \Re z)^{-n/2} e^{-\frac{|x-y|^2}{8|z|}} dx\leq 2^{n}.
\]
With this, we can argue similarly as done in Section 5 Step III of \cite{HiPr} and obtain from \eqref{M6} that
\[
\| w_{2,i}^+\|_1\leq C 2^{n}\|b_i\|_1.
\]
Notice that in the rest of terms, the dimension $n$ is not involved and so there is no extra dependence on $n$ in the constant.
Similarly, we have
$
\| w_{2,i}^-\|_1\leq C 2^{n}\|b_i\|_1.
$
Thus we get from \eqref{M7} and $w_2=\sum_i w_{2,i}$ that
\[
I_3=\mu\{(t,x)\in \R^+\times\R^n\,:\,|w_2(t,x)|>\alpha\}\leq \frac1{\alpha}\sum_i (\|w_{2,i}^+\|_1+\|w_{2,i}^-\|_1)\leq \frac{C2^n}\alpha\sum_i\|b_i\|_1.
\]
By (f4) and (f5), we obtain
\beq\lb{M15}
I_3\leq {C2^{2n}}\sum_i|B_i^+|\leq \frac{C12^n}\alpha\|f\|_1.
\eeq

Overall, from \eqref{M13}, \eqref{M14} and \eqref{M15}, we proved that for some $C$ independent of $n$,
\[
\mu\{(t,x)\in\R^+\times\R^n\,:\,|\calR f(t,x)|>\alpha\} \leq \frac{C2^{8n}n^2 }{\alpha}\|f\|_1.
\]
Thus $\calR$ maps $L^1(\R^+\times\R^n)$ to $L^1_w(\R^+\times\R^n)$ with operator norm bounded by $C2^{8n}n^2$.

\medskip

\noindent {\bf Step 4.} In view of Step 2 and Step 3, the Marcinkiewicz interpolation theorem yields for any $p\in (1,2)$, the operator $\calR$ maps $L^p(\R^+\times\R^n)$ to $L^p(\R^+\times\R^n)$ with operator norm bounded by
\[
(C_{p,n})^\frac{1}{p}:=2\left[\frac{p}{(p-1)(2-p)}\right]^{1/p}(C2^{8n}n^2)^\frac{2-p}{p} 4^\frac{2p-2}{p}.
\]

To find out $C_{\gamma,n}$ for $\gamma>2$, we use a duality argument. The proof will only be given for smooth solutions and the general result holds by approximations. Let $p:=\frac{\gamma}{\gamma-1}\in (1,2)$. Suppose that for some $T>0$ and some function $h\in L^p([0,T]\times\R^n)$, $\phi$ is a smooth, compactly supported vector valued solution to
\[
-\phi_t=\Delta \phi+\nabla h,\quad\text{ in $[0,T]\times\R^n$ with }\phi(T,\cdot)=0.
\]
Due to the bound of $\calR$ obtained on $L^p(\R^+\times\R^n)$, we know
\beq\lb{M11}
\|\nabla\cdot \phi\|_{p}\leq (C_{p,n})^\frac{1}{p}\|h\|_p .
\eeq

Now let $u$ be from \eqref{M0} and pick $h$ such that
\beq\lb{M8}
\|\nabla\cdot u\|_\gamma=\int_{0}^T\int_{\R^n} (\nabla\cdot u ) h\,dxdt\quad\text{ and }\quad \|h\|_p=1.
\eeq
It follows from the equations
\begin{align*}
0&=\frac{d}{dt}(\int_{0}^T\int_{\R^n} u\cdot\phi\,dxdt)=\int_{0}^T\int_{\R^n} (\Delta u+\nabla g)\cdot \phi+u\cdot(-\Delta\phi-\nabla h)dxdt\\
&  =  \int_{0}^T\int_{\R^n} -g \nabla\cdot \phi+(\nabla\cdot u) h\,dxdt.
\end{align*}
Therefore, by \eqref{M11} and \eqref{M8},
\[
\|\nabla\cdot u\|_\gamma\leq \int_{0}^T\int_{\R^n}|g \nabla\cdot\phi|\,dxdt\leq \|g\|_\gamma\|\nabla\cdot\phi\|_p\leq (C_{p,n})^\frac{1}{p}\|g\|_\gamma,
\]
which implies that
\[
\|\Delta v\|_\gamma^\gamma\leq (C_{p,n})^\frac{\gamma}{p}\|g\|_\gamma^\gamma.
\]
Thus using $p=\frac{\gamma}{\gamma-1}$ again, we obtain
\[
C_{\gamma, n}\leq (C_{p,n})^\frac{\gamma}{p}\leq C^\gamma 2^{8n(\gamma-2)+\gamma}n^{2(\gamma-2)}\left(\frac{\gamma-1}{\gamma-2}\right)^{\gamma-1}\leq  C^\gamma (\gamma-2)^{1-\gamma}2^{9n(\gamma-2)}.
\]
\end{proof}

\end{document}